\theoremstyle{plain}
 \newtheorem{thm}{Theorem}[section]
 \newtheorem{prop}[thm]{Proposition}
 \newtheorem{lem}[thm]{Lemma}
 \newtheorem{cor}[thm]{Corollary}
 \newtheorem{fact}[thm]{Fact}
 \newtheorem{teorema}{Theorem}
\theoremstyle{definition}
 \newtheorem{dfn}[thm]{Definition}
 \newtheorem{que}[thm]{Question}
 \newtheorem{exm}[thm]{Example} 
 \newtheorem{rmk}[thm]{Remark}
\def\strok{\!\upharpoonright\!}
\def\forces{\vdash}
\def\Mon{{\mathbb U}}
\renewcommand{\leq}{\leqslant}
\DeclareMathOperator{\ior}{\vdash\hspace*{-0.1em}^{\mathit i}}
\DeclareMathOperator{\tp}{tp}
\DeclareMathOperator{\itp}{itp}
\DeclareMathOperator{\cl}{cl}
\DeclareMathOperator{\Aut}{Aut}
\def\ccel{{\it ccel}}
\begin{document} 

 \title{Around Rubin's ``Theories of linear order''}
  
\author{Predrag Tanovi\'c}
\address[P.\ Tanovi\'c]{Mathematical Institute SANU\\ Knez Mihailova 36, Belgrade, Serbia}
\email[P.\ Tanovi\'c]{tane@mi.sanu.ac.rs}
\thanks{The first author was supported by the Ministry of Education, Science and Technological Development of Serbia through Mathematical Institute SANU}
 
\author{Slavko Moconja}
\address[S.\ Moconja]{University of Belgrade, Faculty of Mathematics\\ Studentski trg 16, 11000 Belgrade, Serbia \and Instytut Matematyczny, Uniwersytet Wroc\l{}awski\\ pl.\ Grunwaldzki 2/4, 50-384 Wroc\l{}aw, Poland}
\email[S.\ Moconja]{slavko@matf.bg.ac.rs}
\thanks{The second author was supported by the Narodowe Centrum Nauki grant no.\ 2016/22/E/ST1/00450, and by the Ministry of Education, Science and Technological Development of Serbia through University of Belgrade, Faculty of Mathematics}

\author{Dejan Ili\'c}
\address[D.\ Ili\'c]{University of Belgrade, Faculty of Transport and Traffic Engineering\\ Vojvode Stepe 305, 11000 Belgrade, Serbia}
\email[D.\ Ili\'c]{d.ilic@sf.bg.ac.rs}
\thanks{The third author was supported by the Ministry of Education, Science and Technological Development of Serbia through University of Belgrade, Faculty of Transport and Traffic Engineering}

\subjclass[2010]{03C64, 06A05.}
\keywords{Linearly ordered structures, binary theories, linear binarity, linear finiteness, coloured orders, convex equivalence relations}

 \begin{abstract} 
Let $\mathcal M=(M,<,...)$ be a linearly ordered first-order structure and $T$ its complete theory. We investigate conditions for $T$ that could guarantee that  $\mathcal M$ is not much more complex than some colored orders (linear orders with added unary predicates). Motivated by Rubin's work \cite{Rubin}, we label three conditions expressing properties of types of $T$ and/or automorphisms of models of $T$. 
We prove several results which indicate the ``geometric'' simplicity of definable sets in models of theories satisfying these conditions. For example, we prove that the strongest condition characterizes, up to definitional equivalence (inter-definability), theories of colored orders expanded by equivalence relations with convex classes. \end{abstract} 

 \maketitle

In 1973, in his master's thesis and the derived paper \cite{Rubin}, Matatyahu Rubin developed powerful  techniques for analyzing model-theoretic  properties of complete, first-order  theories of infinite colored orders (linear orders with added unary predicates). Rubin investigated  finite axiomatizability, topological complexity of type spaces and  saturation of their models, and proved several deep results, the most renowned  being  that the  number of countable models of such theory $T$ is either continuum or finite  (even equal to 1 in the finite language case). Rubin's proof had not been modified before 2015, when Richard Rast in \cite{Rast} improved this result by classifying the isomorphism relation for countable models of $T$ up to Borel bi-reducibility.   
Only recently, the first two authors in \cite{MTso} have generalized Rubin's theorem to a substantially wider context of binary, stationarily ordered, first-order theories. 

  Although  most of Rubin's analysis  uses only basic model-theoretic tools (the compactness theorem and Ehrenfeucht games),  its
deepest part has a strong topological flavor, wherein
some general topology methods are used both as a tool and a way of expressing properties of colored orders. 
In this article, we will continue Rubin's work in the geometric model theory direction. In addition to finding  a ``geometric'' description of definable sets in colored orders, our  main goal is determining wider classes 
of linearly ordered structures whose definable sets are not much more complex than those in colored orders. By a linearly ordered structure $(M,<,...)$ we mean a first-order structure in any language  containing the symbol $<$ interpreted as a linear order. We will consider three  classes of structures and the largest   among them is that of {\it linearly finite} structures. A linearly ordered  structure  $(M,<,...)$  is linearly finite, if:

\smallskip
\noindent
{\bf (LF)} \ For every partitioned formula $\phi(\bar x;\bar y)$ there is an integer $n_{\phi}$ such that for every initial part $C\subset M$  at most $n_{\phi}$  complete $\phi$-types with parameters from $C$   are realized in $M\smallsetminus C$. 

\smallskip
Condition (LF)  is preserved under the elementary equivalence of structures. Therefore, it is a property of the complete first-order   theory of the structure, in which case we say that the theory has the property (LF). The other two classes are   also classes of all models of the theories satisfying certain conditions. 
A complete theory $T$ of linearly ordered structures   satisfies   {\it the strong linear binarity  condition} if:

\smallskip\noindent
{\bf (SLB)} For every  model  $(M,<,...)\models T$,  initial part $C\subset M$  and automorphism $f\in \Aut(M)$  fixing $C$ setwise, the mapping defined by: $g(x)=f(x)$ for $x\in C$ and $g(x)=x$ for $x\notin C$, is an automorphism of $M$.

\smallskip
The third condition is  the  {\it  linear binarity condition}, denoted by (LB). It can be found in Rubin's paper stated in a topological form as a property of type-spaces of complete theories of colored orders. In fact, (LB) motivated us to introduce the other two conditions: (LB) is equivalent to the weak form of (SLB) obtained by referring only to initial intervals   $C=(-\infty,a]$  (for all $a\in M$). 
We will prove  that, in general,  (LB)  is strictly weaker than (SLB) and strictly stronger than (LF). We will also show that (SLB) is satisfied not only in colored orders  but  in \ccel-orders (colored orders expanded by  equivalence relations with convex classes), too. All these are explained in detail in Section 2.

Our main result is a complete characterization of theories satisfying (SLB). We will prove that,
 up to definitional equivalence,  they are theories of \ccel-orders, and we will offer a precise geometric description of  definable sets in these structures. 
In order to explain the above in more detail, let $\mathcal M=(M,<,...)$ be a linearly  ordered $L$-structure and let $T$ be its complete theory. The {\it \ccel-reduct of 
$\mathcal M$} is the structure $\mathcal M^{ccel}=(M,<,P_i,E_j)_{i\in I,\,j\in J}$ in which all   unary definable sets $P_i$ and all definable convex equivalence relations $E_j$ are named; here we assume that the underlying language $L_T$ is chosen in some uniform  way modulo $L$ and $T$. 
The complete $L_T$-theory of $\mathcal M^{\ccel}$ does not depend on the choice of $\mathcal M\models T$; call this theory  {\it the \ccel-companion of $T$} and denote it by $T^{ccel}$. Recall that two first-order structures are definitionally equivalent if they have the same domain and the same definable sets (of tuples).  

\begin{teorema}\label{teorema1}
A complete theory of linearly ordered structures $T$ satisfies the strong linear binarity condition if and and only if it is definitionally equivalent with $T^{ccel}$:  some (equivalently any) model $\mathcal M\models T$ is definitionally equivalent with $\mathcal M^{ccel}$. 
\end{teorema}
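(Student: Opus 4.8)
The plan is to prove both directions of the equivalence. The backward direction (definitional equivalence with $T^{ccel}$ implies (SLB)) should be the easier one: I would first establish that $T^{ccel}$ itself satisfies (SLB), and then argue that (SLB) is preserved under definitional equivalence. To see that a $\ccel$-order satisfies (SLB), fix a model $\mathcal M^{ccel}$, an initial part $C$, and an automorphism $f$ fixing $C$ setwise; define $g$ to agree with $f$ on $C$ and be the identity off $C$. I must check that $g$ preserves $<$, each unary predicate $P_i$, and each convex equivalence relation $E_j$. Order-preservation and preservation of the $P_i$ follow because $C$ is an initial part (so any pair $x<y$ has both in $C$, both outside, or $x\in C$ and $y\notin C$, and in each case the relevant relation is decided either by $f\!\strok\! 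C$ or by the identity). The only subtle point is $E_j$: since its classes are convex and $C$ is initial, each $E_j$-class lies entirely inside $C$, entirely outside $C$, or straddles the cut; I would check that $g$ respects the class structure in each case, using that $f$ fixes $C$ setwise and hence permutes the classes contained in $C$. Preservation under definitional equivalence is then formal, since (SLB) is phrased purely in terms of automorphisms and the order, and definitionally equivalent structures have the same automorphism group and the same $<$.

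The forward direction is the substance of the theorem and where I expect the main work to lie. Assuming $T$ satisfies (SLB), I must show that in a model $\mathcal M$ every definable set is already definable in the $\ccel$-reduct $\mathcal M^{ccel}$; equivalently, that $\mathcal M$ and $\mathcal M^{ccel}$ have the same definable sets of tuples. One inclusion is trivial, since the $P_i$ and $E_j$ are by construction definable in $\mathcal M$. For the hard inclusion I would argue that (SLB) forces every definable relation to be controlled by the order together with the named unary predicates and convex equivalence relations. The natural strategy is to show first that (SLB) implies binarity in a strong sense: the $\Aut$-orbit of a tuple, and more generally every type, is determined by one-variable data and the order relations among coordinates. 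Concretely, (SLB) lets me cut a model at any initial part $C$ and modify an automorphism of $C$ to the identity outside $C$; iterating this over a family of cuts, I can build automorphisms that act independently on disjoint convex pieces. These ``local'' automorphisms are the key tool: they let me show that if two tuples agree on all the data visible in the $\ccel$-reduct (their $<$-configuration, their $P_i$-colors, and their pattern of $E_j$-classes), then they lie in the same orbit, hence realize the same type.

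The main obstacle, and the crux of the argument, will be extracting the convex equivalence relations $E_j$ and showing they suffice. The point is that (SLB) allows arbitrary automorphisms of initial parts to be ``frozen'' outside, but definable relations may link a point to points both before and after it; I must show that any such linkage is forced to be either order-determined, color-determined, or mediated by a convex equivalence relation. The plan is to analyze, for a fixed element $a$, the definable equivalence relation ``$x$ and $y$ have the same type over the initial segment below them'' (or a suitable binary approximation coming from a formula $\phi(x;y)$), use (SLB) to show its classes must be convex, and then recognize it as one of the named $E_j$. I expect the combinatorics of assembling these relations coherently across all formulas $\phi$, and verifying that finitely many unary predicates and convex equivalence relations genuinely capture every definable set (rather than merely every binary one), to require the careful type-counting and orbit analysis that (SLB), via its strengthening of (LB) and (LF), is designed to supply.
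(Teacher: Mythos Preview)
Your backward direction is correct and matches the paper's Lemma~\ref{Lem ccel orders have SLB} exactly, including the case analysis for $E_j$; the preservation of (SLB) under definitional equivalence is indeed formal.

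For the forward direction there is a concrete gap. You propose to show that the relation ``$x$ and $y$ have the same $\phi$-type over the initial segment below them'' has convex classes under (SLB). This is false: in a \ccel-order in which each $E$-class contains elements of two colors $P_0,P_1$ interleaved densely, that relation separates the colors and hence has non-convex classes inside every $E$-class. What (SLB) actually buys is weaker but sufficient: such a relation is \emph{almost} convex (its convex closure $E$ is definable and it cuts each $E$-class into finitely many pieces), and each piece is determined by a \emph{unary} definable predicate. The paper isolates precisely this as Lemma~\ref{Lema SLB ac}: under (SLB), any definable almost convex $R$ with convex closure $E$ satisfies $R(x,y)\Leftrightarrow\bigvee_i(\phi_i(x)\land E(x,y)\land\phi_i(y))$ for unary $\phi_i$. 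Your ``freeze the automorphism outside an initial part'' idea is exactly what drives the proof of that lemma, but the output is this color decomposition, not convexity of the $R$-classes.

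The paper's architecture therefore differs from your orbit sketch: it first shows, using only the weaker (LF) and (LB) consequences of (SLB), that every parametrically definable set of singletons is a Boolean combination of intervals and classes of definable \emph{almost convex} equivalence relations (Sections~3--4, ending with Theorem~\ref{Thm_LB description of def sets}); these sections do the heavy combinatorics via the counting functions $N_\phi(a,b)$ and the description of definable initial parts. Only then does (SLB) proper enter, via Lemma~\ref{Lema SLB ac}, to collapse ``almost convex'' down to ``convex plus color'', after which Theorem~\ref{teorema2} and hence Theorem~\ref{teorema1} follow by compactness. Your plan tries to pass directly from (SLB) to convexity and to a global orbit argument, skipping the intermediate almost-convex layer; without that layer you have no handle on \emph{which} convex equivalence relations and colors are needed, and the back-and-forth cannot get started.
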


In general, theories of \ccel-orders do not eliminate quantifiers: if $n$ is an integer and $E$ is a  convex equivalence relation, then the relation ``$x$ is in the $n$-th successor/predecessor $E$-class of the $E$-class of $y$'', denoted by $x\in S^n_E(y)$, is not necessarily expressible by a quantifier-free formula. We will prove that the lack of these successor-relations is essentially the only obstruction for the elimination of quantifiers. 
By a {\it $u$-convex} formula we will mean either a unary formula or a formula $\theta(x,y)$ which is the conjunction   of a unary $L$-formula $\psi(x)$ and one of the following formulae:\footnote{They are precisely defined in Definition \ref{Definition S^n_E}.}
\begin{center}
$S^{-m}_{E_1}(y)\leqslant x<S^{-n}_{E_2}(y)$, \ $S^{-m}_{E_1}(y)\leqslant x\leqslant S^{n}_{E_2}(y)$  \ and \  $S^{m}_{E_1}(y)< x\leqslant S^{n}_{E_2}(y)$
\end{center}
where $E_1$ and  $E_2$ are definable convex equivalence relations and $m,n$ non-negative integers. The above mentioned geometric description is:

\begin{teorema}\label{teorema2}
If   $T$ satisfies the strong linear binarity condition, then  every $L$-formula is  equivalent modulo $T$ to a Boolean combination of $u$-convex formulae.  
\end{teorema}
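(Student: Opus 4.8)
By Theorem \ref{teorema1}, under (SLB) each $\mathcal M\models T$ is definitionally equivalent with its ccel-reduct, so it suffices to prove the statement inside the ccel-structure: working in a monster model $\Mon\models T$, I may freely use that $<$, all unary definable sets $P_i$, and all definable convex equivalence relations $E_j$ (including the trivial one-class relation) are named. I would pass to the expanded language $L^+$ obtained by adjoining the successor-class relations $x\in S^n_E(y)$ for every definable convex $E$ and every $n\in\mathbb Z$ (with $S^0_E=E$). The point of this move is twofold: every $u$-convex formula is quantifier-free in $L^+$, and, conversely, every atomic $L^+$-formula has at most two free variables. Hence the whole theorem reduces to two claims: (i) the ccel-theory admits quantifier elimination in $L^+$; and (ii) every quantifier-free $L^+$-formula is a Boolean combination of $u$-convex formulae.

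For (i) I would use the primitive-existential criterion: it is enough to eliminate a single $x$ from a conjunction of $L^+$-literals $\bigwedge_i\psi_i(x,\bar y)$. Such a conjunction pins $x$ down in two independent ways — it prescribes the unary ($P_i$-)type of $x$, and it confines $x$ to a convex region $R(\bar y)$ whose endpoints are order-comparisons with, and successor/predecessor $E_j$-classes of, the parameters $\bar y$. Thus $\exists x\,\bigwedge_i\psi_i(x,\bar y)$ asserts exactly that $R(\bar y)$ contains an element of the prescribed unary type, and the heart of (i) is to show that this non-emptiness is itself a quantifier-free $L^+$-condition on $\bar y$. Here (SLB) supplies the back-and-forth engine: gluing a local automorphism onto the identity on an initial part, which is precisely what (SLB) permits, lets me transport a candidate witness between configurations with the same quantifier-free $L^+$-data, so that non-emptiness of $R(\bar y)$ depends only on that data.

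Claim (ii) is then repackaging. Since all $L^+$-atomics are at most binary, quantifier elimination already yields \emph{binarity}: every formula becomes a Boolean combination of one- and two-variable formulae. A binary atomic formula records the position of $x$ relative to the successor/predecessor $E$-classes of $y$, which is a convex interval with successor-class endpoints; attaching the unary $P$-conditions on $x$ as the factor $\psi(x)$, and using the trivial full convex equivalence relation to realize one-sided or $\pm\infty$ bounds, puts each such interval into $u$-convex form, the three listed shapes corresponding to the closed–open, closed–closed, and open–closed endpoint combinations. I expect the main obstacle to lie in (i), and specifically in making the analysis \emph{uniform and finite}: one must bound which equivalence relations $E_1,E_2$ and which successor indices $m,n$ can occur, so that a single finite Boolean combination of $u$-convex formulae works for all $\bar y$. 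This is exactly where the successor relations are unavoidable, and it is where the finiteness forced by (LF) — which follows from (SLB) — does the essential work, bounding the relevant successor depth; a routine compactness argument then assembles the finite Boolean combination, with the delicate boundary analysis (empty or degenerate classes, attained versus unattained endpoints) accounting for the precise form of the literals.
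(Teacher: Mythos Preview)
Your argument is circular. You invoke Theorem~\ref{teorema1} in its direction ``(SLB) $\Rightarrow$ $\mathcal M$ is definitionally equivalent to its \ccel-reduct'', but in the paper that direction is \emph{deduced from} Theorem~\ref{teorema2}: one shows that every $L$-formula is a Boolean combination of $u$-convex formulae, and since $u$-convex formulae are expressible in the \ccel-language, definitional equivalence follows. The converse direction of Theorem~\ref{teorema1} (via Lemma~\ref{Lem ccel orders have SLB}) is the easy one and does not help you here.

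Stripping out that reduction, your plan becomes: prove quantifier elimination for $T$ in $L^+$ directly. But then your primitive-existential analysis must handle \emph{all} $L$-literals, not just the \ccel-type literals (unary predicates, $<$, convex equivalences, successor-class relations) that your sketch treats. For an arbitrary (SLB)-theory the language $L$ may contain relations that are not a priori Boolean combinations of these --- proving that they are is precisely the content of the theorem. Your back-and-forth sketch shows at best that two tuples with the same quantifier-free $L^+$-type have the same $L^+$-type in the \ccel-sense; it does not show they have the same $L$-type, which is what you need.

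The paper's route avoids this trap by never assuming control over arbitrary $L$-relations in advance. It first uses (LF) to describe $a$-definable convex sets via successor classes (Section~3), then shows under (LF) that every $a$-definable set is a Boolean combination of intervals and classes of definable \emph{almost convex} equivalences (Proposition~\ref{prop LF one param}), upgrades this to all parametrically definable sets under (LB) (Theorem~\ref{Thm_LB description of def sets}), and finally uses (SLB) in Lemma~\ref{Lema SLB ac} to straighten almost convex equivalences into convex ones plus unary predicates. The quantifier-elimination statement you are aiming at (Corollary~\ref{cor SLB elim quantifiers}) appears only as a consequence, not as the engine.
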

 
From a model-theoretic point of view, colored orders are as complicated as pure linear orders are: it is well known that for a given linear order with finitely many colors there is a canonical way  of producing a pure linear order in which the original structure is interpretable. Similarly, \ccel-orders  (in a finite language) are canonically interpretable in  colored orders and hence in pure linear orders, too. Therefore, definable sets in both pure linear orders, colored orders and \ccel-orders  are equally complex.  Theorem \ref{teorema2} provides a geometric description of definable sets in these structures, and even in the case of colored orders, the description  is somewhat more comprehensive than  Simon's from \cite{Simon}.  

\medskip 
In most of the paper, we will deal with linearly finite structures.
We will find fairly precise descriptions of their parametrically definable convex sets and their
unary definable functions. We will also describe 
one-parameter definable subsets of these structures. Then the main results, as well as a not-so-precise description of all parametrically definable subsets of (LB)-structures, will follow rather routinely.

\smallskip 
The paper is organized as follows: Section 1 contains preliminaries and basic facts. In Section 2 we   introduce   the three conditions and study their relationship. In Section 3 we study convex sets definable in linearly finite structures. In Section 4 we introduce almost convex equivalence relations as finite-index refinements of convex equivalence relations. We prove that every one-parameter definable set  in a linearly finite structure is a Boolean combination of unary definable sets, intervals, and classes of almost convex equivalence relations; the same description applies to all parametrically definable sets assuming (LB). In Section 5, rather as corollaries of the previous results, we deduce the main ones.

\section{Preliminaries}

Throughout the paper, $L$ is a first-order language containing a binary relation symbol $<$,  $T$ is a complete $L$-theory having infinite models in which $<$ defines a linear order (denoted by the same symbol) and  $(\Mon,<,...)$ is a very large sufficiently saturated and strongly  homogeneous model of $T$. By $a,b,\dots $ we will denote its elements, by $\bar a,\bar b,\dots $ tuples of elements, and by $A,B,A',\dots $ its small (smaller than the degree of saturation of $\Mon$)  subsets. Letters $C,D,\dots $ are reserved for subsets  which are not necessarily small, e.g.\ for definable sets. 
$S_n(T)$ is the space of all complete types in $n$ variables $\bar x$ with   basic clopen sets   of the form   $[\phi]=\{p\in S_n(T)\mid \phi(\bar x)\in p\}$ for formulae $\phi(\bar x)$. 
For $C$ a subset of the domain of an $L$-structure, by $\phi(C^n)$ we will denote the solution set of $\phi(\bar x)$ in $C^n$. By  definable subsets (relations,\dots ) of the $L$-structure we will mean $L$-definable ones; similarly for type-definable subsets, i.e.\ subsets defined by an infinite set (conjunction)  of formulae.  $L(A)$ is the language $L$  expanded by constants for elements of $A$ and solution sets  of $L(A)$-formulae are $A$-definable sets.

Let $\phi(\bar x;\bar y)$ be a partitioned formula (one in which variables $\bar y$ are reserved for parameters), let $|\bar x|=n$ and $|\bar y|=m$. By a $\phi$-type over parameters $A$ we will mean a  consistent set of formulae $\{\phi^{\epsilon(\bar a)}(\bar x,\bar a)\mid\bar a\in A^m\}$  where $\epsilon\in 2^{A^m}$, $\phi^0:=\lnot\phi$ and  $\phi^1:=\phi$. For a tuple $\bar c\in\Mon^n$, we write $\tp_\phi(\bar c/A)$ for the $\phi$-type of $\bar c$ over $A$: $\tp_\phi(\bar c/A)=\{\phi^\epsilon(\bar x,\bar a)\mid \bar a\in A^m,\epsilon\in 2\mbox{ and }\models\phi^\epsilon(\bar c,\bar a)\}$. If $\Pi(x)$ is a partial type over $A$, then by $\Pi(\Mon)$ we denote its locus, i.e. the solution set. 
Two first-order structures  (in possibly distinct languages) are {\it definitionally equivalent} (or inter-definable) if they have the same domain and the same definable sets. Two complete theories are definitionally equivalent if they have  (a pair of) definitionally equivalent models.  

We write $C<D$ if $c<d$ holds for all $c\in C$ and $d\in D$; similarly for $C\leqslant D$ and  $c<D$. $C$ is an {\em initial part} if $c\in C$ and  $d<c$ imply $d\in C$; a {\em final part} is defined dually. Intervals
$(a,\infty)$ and $(-\infty,a)$ are defined in the usual way. An element $d$ is an {\em upper bound} of $C$ if $C<d$ holds;  {\em lower bounds} are defined dually.
By $\sup(C)<\sup(D)$ we mean that the set of upper bounds of $D$ is strictly contained in the set of upper bounds of $C$; similarly for $\sup(C)\leqslant \sup( D)$, $a<\sup D$, etc.  If $C,D$ are definable sets, then these relations are definable, and we will use the same notation for the defining formulae.
 A {\it convex equivalence  relation} on a linearly  ordered  set  is one whose classes are $<$-convex subsets.

\subsection{Interval types}

A  formula in one free variable is {\em convex (initial, final)} if it defines a convex (initial, final) subset of $\Mon$. 
By  {\em an interval type over $A$} we mean a maximal partial 1-type $\Pi(x)$ consisting of convex $L(A)$-formulae. An interval type $\Pi$ is closed for finite conjunctions, and for every convex $L(A)$-formula $\phi(x)$,  either $\phi(x)\in\Pi(x)$ or $\phi(x)$ is inconsistent with $\Pi(x)$. Some authors, notably Rosenstein in \cite{Rosenstein}, define interval types as maximal partial 1-types consisting of initial and final formulae. The two definitions are not ambiguous  since every interval type in Rosenstein sense uniquely extends to an interval type in the sense of the definition given here.
The set of all interval types over $A$ is denoted by $IT(A)$. It is endowed with compact, Hausdorff topology in the usual way.
Clearly, the locus of an  interval type is a convex set, and distinct interval types over $A$ have disjoint loci   so that $IT(A)$ is naturally linearly ordered by $<$.

{\it The interval type of $a$ over $A$}, denoted by $\itp(a/A)$,  consists of all convex $L(A)$-formulae satisfied by $a$.   It is easily seen that the locus of $\itp(a/A)$ is the convex hull of the locus of $\tp(a/A)$.

For $A\subseteq B$ and $\Pi\in IT(A)$ we will write $\Pi\ior \Pi\,|\,B$  if there is a unique interval type over $B$ extending $\Pi$ (denoted by $\Pi\,|\,B$). We will say that an initial part $D$ {\em cuts} (the locus of) an interval type $\Pi$ if there exist $a,b\models\Pi$ such that $a\in D$ and $D<b$. Note that if $\Pi\in IT(A)$, then such an  $a$ and $b$ can be chosen having the same type over $A$, because  $\Pi(\Mon)$ is the convex hull of the locus of $\tp(a/A)$.

\begin{fact}\label{lema itp on cutting} 
\begin{enumerate}[(a)]
\item Let $\Pi\in IT(A)$ and $A\subseteq B$. Then $\Pi\ior\,\Pi\,|\, B$ if and only if no $B$-definable initial part cuts $\Pi$.
\item For every parametrically definable  set $D\neq\emptyset$ and every $A$ there exists a maximal (with respect to $<$) interval type from $IT(A)$  consistent with $x\in D$.
\item A parametrically definable initial part does not cut any interval type from $IT(A)$ if and only if it is $A$-definable.
\end{enumerate}
\end{fact}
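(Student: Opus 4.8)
The plan is to handle the three parts in order, using part (b) inside the proof of (c), and to rely throughout on two features of interval types recorded above: an interval type over $A$ decides every convex $L(A)$-formula, and two distinct interval types over the same set have disjoint convex loci, hence are $<$-comparable. For (a) I would first observe that an extension of $\Pi$ to some $\Pi'\in IT(B)$ always exists, by extending the (consistent) family of convex $L(A)$-formulae in $\Pi$ to a maximal consistent family of convex $L(B)$-formulae via Zorn's lemma; maximality of $\Pi$ over $A$ forces $\Pi'$ to restrict back to $\Pi$. So the content is the equivalence of uniqueness with non-cutting. If a $B$-definable initial part $D$ cuts $\Pi$, then ``$x\in D$'' is a convex $L(B)$-formula consistent with $\Pi$ (witnessed by the lower point) and so is ``$x\notin D$'' (witnessed by the upper point), so completing $\Pi\cup\{x\in D\}$ and $\Pi\cup\{x\notin D\}$ separately gives two distinct members of $IT(B)$ above $\Pi$. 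Conversely, given $\Pi_1<\Pi_2$ in $IT(B)$ both extending $\Pi$, I pick a convex $L(B)$-formula $\phi\in\Pi_1\setminus\Pi_2$; since $\phi(\Mon)\supseteq\Pi_1(\Mon)$ lies below $\Pi_2(\Mon)$ and is disjoint from it, convexity forces $\phi(\Mon)<\Pi_2(\Mon)$, and the downward closure of $\phi(\Mon)$ is then a $B$-definable initial part cut by $\Pi$ (a realization of $\Pi_1$ lies in it, a realization of $\Pi_2$ lies strictly above it).

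For (b), rather than invoking completeness of the order on $IT(A)$, I would build the maximal type by a single compactness argument. Fixing parameters for $D$, I show the partial type $\{x\in D\}\cup\{\psi(x):\psi \text{ a final } L(A)\text{-formula consistent with } x\in D\}$ is consistent: a finite conjunction of such $\psi_i$ is again final, and its locus, being the smallest of finitely many loci (final parts are linearly ordered by inclusion), still meets $D$ because one of the $\psi_i$ does. A realization $a$ lies in $D$ and satisfies every final $L(A)$-formula meeting $D$, and I claim $\itp(a/A)$ is $<$-maximal among interval types consistent with $x\in D$. Indeed, if some $\Pi'>\itp(a/A)$ were consistent with $D$, a convex $L(A)$-formula $\phi\in\itp(a/A)\setminus\Pi'$ would satisfy $\phi(\Mon)<\Pi'(\Mon)$, so the final formula ``$x>\phi(\Mon)$'' would be consistent with $D$ (witnessed by a realization of $\Pi'$ in $D$) yet would fail at $a\in\phi(\Mon)$, a contradiction.

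For (c), the direction ``$A$-definable $\Rightarrow$ cuts nothing'' is immediate: if $D$ is $A$-definable then ``$x\in D$'' is a convex $L(A)$-formula, which any $\Pi\in IT(A)$ either contains (so $\Pi(\Mon)\subseteq D$) or is inconsistent with (so $\Pi(\Mon)\cap D=\emptyset$), and neither case permits a cut. For the converse I argue by producing the defining formula. Assuming the nonempty $D$ cuts no interval type over $A$, a short analysis shows every $\Pi\in IT(A)$ satisfies $\Pi(\Mon)\subseteq D$ or $\Pi(\Mon)>D$; hence $D$ is the union of the loci of its ``low'' types and therefore the downward closure of $\Pi^*(\Mon)$, where $\Pi^*$ is the maximal interval type consistent with $D$ furnished by (b). The key step is to find a convex $L(A)$-formula $\phi\in\Pi^*$ with $\phi(\Mon)\subseteq D$: if none existed, every $\phi\in\Pi^*$ would meet $D^{c}$, so by compactness $\Pi^*\cup\{x\notin D\}$ would be realized by some $b\models\Pi^*$ with $b>D$, and together with a realization $a\in D$ of $\Pi^*$ this would exhibit $D$ cutting $\Pi^*$, contrary to hypothesis. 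Given such $\phi$, the downward closure of $\phi(\Mon)$ is an $A$-definable initial part, and since $D$ is the downward closure of $\Pi^*(\Mon)$, which is contained in the downward closure of $\phi(\Mon)$, which is in turn contained in $D$, these coincide and $D$ is $A$-definable.

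I expect the main obstacle to be part (b): correctly reading ``maximal with respect to $<$'' and ensuring the compactness construction yields a genuine $<$-maximum rather than a mere supremum that might fall outside the set of types consistent with $D$. The convexity bookkeeping in (a) and (c) — converting ``one convex locus lies below another'' into the existence of a separating final or initial $L(A)$-formula — is routine but must be done with care, since it is precisely the device that turns order-theoretic facts about loci into first-order definable data.
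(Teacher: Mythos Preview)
Your argument is correct and tracks the paper's proof closely; part (a) fills in what the paper dismisses as ``easy,'' and part (c) is the same compactness-plus-maximal-type argument, only phrased via downward closures rather than the explicit formula $\exists y\,(\phi(y)\land x\le y)$. The one genuine variation is in (b): the paper argues by contradiction, assuming no maximum and extracting for each $\Pi\in S$ an \emph{initial} $A$-formula $\phi_\Pi\in\Pi$ whose negation still meets $D$, then realizing all the $\lnot\phi_\Pi$ together with $x\in D$ to produce a type in $S$ above every $\Pi$; you instead build the maximum directly by collecting all \emph{final} $A$-formulae consistent with $D$ and realizing them simultaneously. The two arguments are dual compactness moves and equally short; your version has the minor advantage of being constructive and of making the role of the linear order on final parts explicit, while the paper's contradiction form perhaps makes the ``supremum is attained'' worry you flagged disappear automatically.
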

\begin{proof} (a) This is easy.

\smallskip
(b) Toward contradiction, assume that the set $S$ of all interval types over $A$ consistent with $x\in D$ does not have maximum. By compactness, for each $\Pi\in S$ there is an initial formula $\phi_{\Pi}(x)\in \Pi(x)$  such that 
$\lnot\phi_\Pi(x)\land x\in D$ is consistent. 
The set  $\{\lnot\phi_\Pi(x)\mid \Pi\in S\}\cup\{x\in D\}$ is also consistent so, by compactness and saturation, there exists $a\in D$ realizing it. Then $a\in D$ implies $\itp(a/A)=\Pi$ for some $\Pi\in S$ which is in contradiction with  $\lnot\phi_\Pi(x)\in\tp(a)$.

\smallskip
(c) The right-to-left implication is clear. To prove the other one, assume that $D$ is a parametrically definable  initial part not cutting any interval type from $IT(A)$. By part (b), there is a  maximal interval type  $\Pi\in IT(A)$ consistent with $x\in D$. Since $D$ is an initial part, does not cut $\Pi$ and intersects $\Pi(\Mon)$,  we have $\Pi(x)\forces x\in D$. By compactness there exists $\phi(x)\in\Pi$ with $\phi(x)\forces x\in D$. 
We  claim that $D$ is defined by the $L(A)$-formula $\exists y\,(\phi(y)\land x\leq y)$.
The solution set $D_0$ of this formula is an initial part contained in $D$ because $D$ is initial and $\phi(x)\forces x\in D$. On the other hand, if $D_0\subset D$ were true, then any $a\in D\smallsetminus D_0$ would satisfy $\phi(\Mon)<a$ and $\Pi<\itp(a/A)$; the latter contradicts the maximality of $\Pi$. Hence $D_0=D$ and $D$ is $A$-definable. 
\end{proof}

\section{Binarity and binarity-like conditions}\label{S binarity}

 Bruno Poizat in his book \cite{Poizat} extracted the following definition from Rubin's work:
Given a sequence $(\phi_1(x), \dots , \phi_n(x))$ of formulae in one free
variable $x$ and two elements $a < b$,
we will say that this sequence of formulae is realized between $a$ and $b$ if
there are  elements  $c_1,c_2,\dots ,c_n$ with $c_i$ satisfying $\phi_i (x)$ for $i\leqslant n$ 
and such that $a<c_1<\dots <c_n<b$. Poizat's Theorem 12.32, which he calls Rubin's Theorem, is a restatement of  Rubin's Corollary 3.9 and states that theories of colored orders have the following property:

\medskip
\noindent{\bf (RB)} \  Two increasing $n$-tuples  $a_1<\dots <a_n$ and $b_1<\dots <b_n$ of elements of models of $T$  have the same
type if and only if they satisfy the following conditions:
\begin{itemize}
\item  $\tp(a_i)=\tp(b_i)$  for every $i\leqslant n$;
\item For every $i< n$ the same finite sequences of formulae are realized
between $a_i$ and $a_{i+1}$  as between $b_i$ and $b_{i+1}$. 
\end{itemize}
(Note that here it suffices to verify that $n$-tuples of some $\omega$-saturated model of $T$  satisfy the above condition.)

We have chosen (RB) as an abbreviation for Rubin's binarity condition; to justify the word binarity here, first recall that a first-order theory $T$ is {\it binary} if every formula is equivalent  modulo $T$ to a Boolean combination of formulae in at most two free variables; equivalently: every complete $n$-type is determined by the union of all its complete 2-subtypes. It is easy to see that the complete theory of any $\omega$-saturated linearly ordered structure  satisfying condition (RB) is binary.  In particular, Rubin's Theorem implies that any complete theory of  colored orders is binary.   
We will prove a little bit more: that (RB) is the key property of theories of colored orders, i.e.\ that it characterizes, up to definitional equivalence, colored orders among the saturated linearly ordered structures.

\begin{thm}\label{Prop RB}
An $\omega$-saturated, linearly ordered structure   satisfies condition (RB) if and only if it is definitionally equivalent to a  colored order. 
\end{thm}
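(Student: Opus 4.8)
The plan is to prove the two directions separately, with essentially all the content in the ``only if'' direction. For the ``if'' direction, suppose $\mathcal M$ is $\omega$-saturated and definitionally equivalent to a colored order $\mathcal N=(M,<,(P_i)_i)$. By Rubin's Theorem (Poizat's 12.32, quoted above) $\mathcal N$ satisfies (RB). Now (RB) is phrased entirely in terms of data that is invariant under definitional equivalence: equality of complete types of tuples, equality of $1$-types, and the lists of one-variable formulae realized in the open intervals $(a_i,a_{i+1})$. Since $\mathcal M$ and $\mathcal N$ have the same domain, the same order, and the same unary definable sets, the phrase ``the same finite sequences of formulae are realized between $a_i$ and $a_{i+1}$'' computes the same thing in $\mathcal M$ as in $\mathcal N$, and likewise for type-equality; hence (RB) transfers from $\mathcal N$ to $\mathcal M$.

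For the ``only if'' direction, let $\mathcal M=(M,<,\dots)$ be $\omega$-saturated and satisfy (RB). I would take the obvious candidate: let $\mathcal N=(M,<,(P_i)_{i\in I})$ be the colored order in which every $\emptyset$-definable unary subset of $\mathcal M$ is named by a predicate $P_i$. Then $\mathcal N$ is a reduct of $\mathcal M$, so every $\mathcal N$-definable set is $\mathcal M$-definable, and the whole problem is to prove the converse. Since (RB) makes $T=\mathrm{Th}(\mathcal M)$ binary (as noted above), it suffices to show that every one- and two-variable $\mathcal M$-formula is equivalent modulo $T$ to an $\mathcal N$-formula. One-variable formulae define unary sets, which are colors by construction, so the task reduces to the two-variable case.

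The key step, and the main obstacle, is the following reconstruction claim: for $a<b$ and $a'<b'$ in $M$, if $\tp^{\mathcal N}(a,b)=\tp^{\mathcal N}(a',b')$ then $\tp^{\mathcal M}(a,b)=\tp^{\mathcal M}(a',b')$ (the converse being automatic, as $\mathcal N$ is a reduct). To prove it I would invoke (RB) for $\mathcal M$, which reduces $\tp^{\mathcal M}(a,b)=\tp^{\mathcal M}(a',b')$ to (i) $\tp^{\mathcal M}(a)=\tp^{\mathcal M}(a')$ and $\tp^{\mathcal M}(b)=\tp^{\mathcal M}(b')$, together with (ii) the same finite sequences of one-variable formulae are realized in $(a,b)$ as in $(a',b')$. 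For (i), a $1$-type in $\mathcal M$ records exactly which unary definable sets contain the element, i.e.\ its colors, so it is determined by the $\mathcal N$-type. For (ii), the crucial observation is that a one-variable formula $\phi_j(x)$ defines a color $P_{k(j)}$, so the assertion that $(\phi_1,\dots,\phi_m)$ is realized between $x$ and $y$ is expressed by the single $\mathcal N$-formula $\exists z_1<\dots<z_m\,(x<z_1\wedge z_m<y\wedge\bigwedge_{j}P_{k(j)}(z_j))$ and is therefore decided by $\tp^{\mathcal N}(a,b)$. Thus $\tp^{\mathcal N}(a,b)=\tp^{\mathcal N}(a',b')$ forces both (i) and (ii), and (RB) yields the claim. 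The degenerate cases $a=b$ and $a>b$ are handled by the $\mathcal N$-definable trichotomy together with the one-variable case and symmetry.

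Finally I would package the claim topologically. Because $\mathcal M$ is $\omega$-saturated every complete $2$-type over $\emptyset$ is realized, and the same holds for $\mathcal N$ (a reduct of an $\omega$-saturated structure is $\omega$-saturated). Hence the restriction map $r$ from the space of $\mathcal M$-$2$-types to the space of $\mathcal N$-$2$-types is surjective, and it is injective by the claim; being a continuous bijection between compact Hausdorff spaces it is a homeomorphism. Consequently clopen sets correspond under $r$, so for every two-variable $\mathcal M$-formula $\psi(x,y)$ the image $r([\psi])$ is a basic clopen $[\theta]$ with $\theta$ an $\mathcal N$-formula, whence $\psi(M^2)=\theta(M^2)$. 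Together with binarity this shows every $\mathcal M$-definable set is $\mathcal N$-definable, which with the trivial reduct direction gives that $\mathcal M$ and $\mathcal N$ are definitionally equivalent. I expect the only genuinely delicate point to be the reconstruction claim, specifically the recognition that betweenness of colors is itself colored-order-definable; the passage from matching type spaces to matching definable sets is then routine compactness.
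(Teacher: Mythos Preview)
Your proof is correct and follows essentially the same approach as the paper: both reduce to two-variable formulae via binarity and then observe that the (RB) data---matching $1$-types and finite sequences of unary formulae realized between consecutive elements---is expressible in the colored-order language. The only cosmetic difference is in the final step: the paper extracts the equivalent $L_u$-formula for a given $\psi(x,y)$ directly via a compactness/covering argument on $[\psi]\subseteq S_2(T)$, whereas you package the same compactness as the Stone-space homeomorphism $S_2^{\mathcal M}\to S_2^{\mathcal N}$.
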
 
\begin{proof}The right-to left direction is Rubin's theorem. To prove the other direction, suppose that a  $(\Mon,<,\dots )$   satisfies (RB); then its complete theory is binary.
  Let   $L_u$ be the language consisting  of  $<$ and   (new)   symbols  for all unary  $L$-definable subsets.   Interpret the new symbols naturally to obtain an  $L\cup L_u$-expansion; it is a saturated, definitional expansion of the original structure. In order to prove that its $L$-reduct and its $L_u$-reduct are definitionally equivalent, and having on mind the binarity, it suffices to show that every  binary $L$-formula is equivalent (modulo the theory $T^*$ of the expansion) to an $L_u$-formula.
Let  $\mathcal F$ be the set of all finite sequences of unary $L$-formulae. For each finite sequence   $\vec \phi=(\phi_1(x),\dots , \phi_n(x))\in \mathcal F$  define: 
  $$\theta_{\vec\phi}(x,y):= \exists z_1\dots z_n\,(x<z_1<\dots <z_n<y\land \bigwedge_{1\leqslant i\leqslant n}\phi_i(z_i)).$$ 
$\theta_{\vec\phi}(x,y)$ is an $L$-formula describing that the sequence $\vec\phi$ is realized between $x$ and $y$; note that it is $T^*$-equivalent to an $L_u$-formula. 
   
  Let $\psi(x,y)$ be a consistent $L$-formula implying $x<y$. For each $L$-type $r(x,y)\in S_2(T)$ containing $\psi(x,y)$   choose a formula $\sigma_r(x,y)\in r$ implying $\psi(x,y)$ in the following way:
 Let $(a,b)$ realize $r$. By applying condition (RB) to   $r(x,y)$ we have:
 $$ \tp_x(a)\cup \tp_y(b)\cup \{x<y\}\cup\{\theta_{i}^{\epsilon_{i}}(x,y)\mid i\in\mathcal F, \ \epsilon_i\in\{0,1\} \mbox{ and } \models \theta_{i}^{\epsilon_{i}}(a,b)\}\vdash \psi(x,y).$$
By compactness,  there are $\chi_1(x)\in \tp_x(a)$, $\chi_2(y)\in\tp_y(b)$, and  a finite subset $\mathcal F_0\subseteq \mathcal F$ with:   
$$ \models \left(\chi_1(x)\land \chi_2(y)\land x<y\land \bigwedge_{i\in \mathcal F_0}\theta_i^{\epsilon_i}(x,y)\right)\rightarrow \psi(x,y).$$
Denote by $\sigma_r(x,y)\in r$ the formula   on  the left hand side of the implication.

 Now, we have  a cover $\{[\sigma_r]\mid r\in [\psi]\subseteq S_2(T)\}$  of   the closed subset  $[\psi]$ of $S_2(T)$. By compactness, there is a finite subcover. Clearly, the disjunction of all the formulae $\sigma_r$ from the subcover  is $T^*$-equivalent to $\psi(x,y)$. Since each  $\sigma_r(x,y)$ is  $T^*$-equivalent to  an $L_u$-formula, so is $\psi(x,y)$. Hence, any consistent $L$-formula $\psi(x,y)$ implying $x<y$ is $T^*$-equivalent to an $L_u$-formula.
 
An arbitrary consistent binary $L$-formula $\psi(x,y)$ is equivalent to $(\psi(x,y)\land x<y)\lor \psi(x,x)\lor (\psi(x,y)\land y<x)$. The first and the third disjunct are $T^*$-equivalent to  $L_u$-formulae by the previous considerations, and the second disjunct is a unary $L$-formula, hence $L_u$ contains a name for its solution set. This completes the proof of the theorem. 
\end{proof}

Before  continuing, we note the following naturally imposed question:

\begin{que}
Is there an $L$-free condition characterizing, up to definitional equivalence, pure linear orders  among all  linearly ordered $L$-structures?
\end{que}
 
The next condition, called {\it  linear binarity}, was stated in a topological form as a property of theories of colored orders in Rubin's   Lemma 7.9 in \cite{Rubin}. It was re-formulated by  Pierre Simon in \cite{Simon2} and  \cite{Simon}.
 
\medskip\noindent
{\bf (LB)} \ For all increasing sequences $a_1<a_2<\dots <a_n$ of elements of a model of $T$:
$$\bigcup_{1\leqslant i<n} \tp_{x_i,x_{i+1}}(a_i,a_{i+1})\vdash \tp_{x_1,\dots ,x_n}(a_1,\dots ,a_n).$$

\begin{lem}\label{lem LB equiv} Each of the following conditions is equivalent with (LB):
\begin{enumerate}
\item Every  type $p(x_1,\dots ,x_n)\in S_n(T)$ that implies $x_1<\dots <x_n$  is the unique completion of the type $\bigcup_{1\leqslant i<n} p_{\strok (x_i,x_{i+1})}$, where each  $p_{\strok (x_i,x_{i+1})}$ consists of all the formulae from $p$ having no free variables other than $x_i$ and $x_{i+1}$.
\item Every  formula in free variables $x_1,\dots ,x_n$  that implies $x_1<\dots <x_n$ is $T$-equivalent to a Boolean combination of formulae of the form $\psi(x_i,x_{i+1})$ ($1\leqslant i< n$).  
\item For every  model  $M\models T$,   $a\in M$ and automorphism $f\in \Aut(M)$  fixing $a$, the mapping defined by: $g(x)=f(x)$ for $x\leqslant a$ and $g(x)=x$ for $a<x$, is an automorphism of $M$.
\end{enumerate}
\end{lem}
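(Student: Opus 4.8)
The three conditions split into two soft equivalences and one substantive one, so I would establish (LB) $\Leftrightarrow$ (1), then (1) $\Leftrightarrow$ (2), and finally (LB) $\Leftrightarrow$ (3). The first is a pure restatement. Writing $p=\tp(a_1,\dots,a_n)$ for an increasing tuple, the restriction $p_{\strok(x_i,x_{i+1})}$ is exactly $\tp_{x_i,x_{i+1}}(a_i,a_{i+1})$, so the partial type $\Sigma:=\bigcup_i p_{\strok(x_i,x_{i+1})}$ is the one appearing in (LB). Now (LB) says $\Sigma\vdash p$, which is the same as saying $p$ is the \emph{unique} completion of $\Sigma$: any completion contains $p$ and hence equals it, and conversely a partial type with a unique completion implies it by compactness (if $\phi\in p$, then $[\neg\phi]$ is disjoint from the closed set of completions of $\Sigma$, so $\Sigma\cup\{\neg\phi\}$ is inconsistent). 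Since increasing tuples correspond to types implying $x_1<\dots<x_n$, this yields (LB) $\Leftrightarrow$ (1).

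For (1) $\Leftrightarrow$ (2) I would argue by Stone duality on the clopen subspace $X:=[x_1<\dots<x_n]\subseteq S_n(T)$. Condition (1) says precisely that $p\mapsto(p_{\strok(x_i,x_{i+1})})_i$ is injective on $X$, i.e.\ that the binary formulae $\psi(x_i,x_{i+1})$ \emph{separate the points} of $X$: if $p\neq q$ in $X$ had all consecutive-pair restrictions equal, both would complete the common partial type, against uniqueness. The Boolean combinations of such $\psi(x_i,x_{i+1})$ form a subalgebra of the clopen algebra of $X$, and a point-separating subalgebra of the clopen algebra of a Stone space is the whole algebra (cover each clopen $K$ from inside by basic sets and use compactness twice). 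Hence $[\phi]$ for any $\phi$ implying the order is such a Boolean combination, which is (2), once we note $x_1<\dots<x_n$ is itself a conjunction of admissible formulae $x_i<x_{i+1}$. The converse is direct: given $\phi\in p$, conjoin the order and rewrite $\phi$ as a Boolean combination of formulae $\psi(x_i,x_{i+1})$; each such conjunct lies in $p$ iff it lies in $\Sigma$ (as $\Sigma$ already decides every binary formula), so every completion of $\Sigma$ agrees with $p$ on $\phi$, forcing uniqueness.

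The heart of the lemma is (LB) $\Leftrightarrow$ (3). For (LB) $\Rightarrow$ (3), fix $M\models T$, $a\in M$, $f\in\Aut(M)$ fixing $a$, and the induced map $g$. One checks immediately that $g$ is an order-automorphism fixing $a$, so it is an $L$-automorphism iff it preserves the type of every increasing tuple (a general tuple being handled by sorting). Given $\bar c=(c_1<\dots<c_n)$, insert $a$ to obtain an increasing tuple $\bar d$ (no insertion if some $c_k=a$). Since $g$ fixes $a$, agrees with the automorphism $f$ on $(-\infty,a]$ and with the identity on $(a,\infty)$, \emph{every} consecutive pair of $\bar d$ is sent by $g$ to a pair of the same type: pairs below $a$ by $f\in\Aut(M)$, pairs above $a$ trivially, and the two pairs straddling $a$ because $g(a)=a$ and $g$ is $f$, resp.\ the identity, on the relevant side. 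By (LB) applied to $\bar d$ and $g(\bar d)$ we get $\tp(\bar d)=\tp(g(\bar d))$, and restricting to $\bar c$ gives $\tp(\bar c)=\tp(g(\bar c))$.

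For (3) $\Rightarrow$ (LB) I would work in $\Mon$ and first extract a \emph{dual} cut from (3): if $f$ fixes $a$ and $g$ is as in (3), then $f\circ g^{-1}$ is the identity on $(-\infty,a]$ and equals $f$ on $(a,\infty)$, and being a composition of automorphisms it is an automorphism. Then I prove (LB) in its point-moving form (if $\tp(a_i,a_{i+1})=\tp(b_i,b_{i+1})$ for all $i$ then $\tp(\bar a)=\tp(\bar b)$) by induction on $n$. The inductive hypothesis gives $\sigma\in\Aut(\Mon)$ with $\sigma(a_i)=b_i$ for $i\leq n-1$; putting $c_n:=\sigma(a_n)>b_{n-1}$ we get $\tp(b_{n-1},c_n)=\tp(a_{n-1},a_n)=\tp(b_{n-1},b_n)$, so some $\rho\in\Aut(\Mon)$ fixes $b_{n-1}$ and sends $c_n$ to $b_n$. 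Applying the dual cut at $a=b_{n-1}$ replaces $\rho$ by an automorphism $\tau$ that is the identity on $(-\infty,b_{n-1}]$ and $\rho$ above it; then $\tau$ fixes $b_1,\dots,b_{n-1}$ and sends $c_n\mapsto b_n$, so $\tau\sigma$ carries $\bar a$ to $\bar b$. I expect this last direction to be the main obstacle: the witnessing $\rho$ may disturb the already-matched points $b_1,\dots,b_{n-2}$, and the entire force of (3) is precisely what lets us amputate $\rho$ below $b_{n-1}$ while keeping its action on $c_n$.
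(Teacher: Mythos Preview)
Your proposal is correct and follows essentially the same route as the paper. The only notable difference is in (3) $\Rightarrow$ (LB): the paper applies the induction hypothesis to the \emph{last} $n$ coordinates and then fixes the leftmost one, so that condition (3) can be invoked directly at the pivot $a_1$; you instead match the first $n-1$ coordinates and fix the rightmost one, which forces you to first extract the ``dual cut'' $f\circ g^{-1}$. Your derivation of the dual cut is fine, but the paper's choice avoids that extra step.
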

\begin{proof}
 (1) is a restatement of (LB) and   (1)$\Leftrightarrow$(2) is a straightforward consequence of compactness. 
 
\smallskip 
(1)$\Rightarrow$(3) Suppose that   $b_1<\dots <b_m<a<c_1<\dots <c_n$ are elements of $M$, $f\in\Aut(M)$ fixes $a$ and that $g$ is defined as in (3). Then we have:
\begin{eqnarray*}
\tp(b_1,\dots ,b_m,a,c_1,\dots ,c_n)&=& \tp(f(b_1),\dots ,f(b_m),a,c_1,\dots ,c_n)\\
&=&\tp(g(b_1),\dots ,g(b_m),g(a),g(c_1),\dots ,g(c_n));
\end{eqnarray*}
where the first equality follows by (1) because 
all the corresponding  pairs of consecutive elements of tuples  $(b_1,\dots ,b_m,a,c_1,\dots ,c_n)$ and $(f(b_1),\dots ,f(b_m),a,c_1,\dots ,c_n)$ have the same type, while the second holds by the definition of $g$.   It follows that $g$ is an automorphism. 

\smallskip
(3)$\Rightarrow$(LB) We may work in a saturated model, and we use induction on $n$. For $n=1$ there is nothing to be proved. Let $a_0<a_1<\dots <a_n$ and $a_0'<a_1'<\dots <a_n'$ be two tuples with $\tp(a_i,a_{i+1})=\tp(a_i',a_{i+1}')$ for $0\leqslant i<n$. By the induction hypothesis we have $\tp(a_1,\dots ,a_n)= \tp(a_1',\dots ,a_n')$, and we may find an automorphism $h$ mapping $(a_1',\dots ,a_n')$ to $(a_1,\dots ,a_n)$. Set $a_0''=h(a_0')$. It suffices  to prove $\tp(a_0,a_1,\dots ,a_n)= \tp(a_0'',a_1,\dots ,a_n)$. Since $\tp(a_0,a_1)= \tp(a_0'',a_1)$, there is an automorphism $f$ fixing $a_1$ and mapping $a_0''$ to $a_0$. By (3),  $f$ can be redefined as identity on $[a_1,\infty)$, so   $f$ maps $(a_0'',a_1,\dots ,a_n)$ to $(a_0,a_1,\dots ,a_n)$,  finishing the proof.
\end{proof}

Motivated by condition (3) of the previous lemma, we say that $T$ satisfies   {\it the strong linear binarity} condition if:

\medskip\noindent
{\bf (SLB)} For every  model  $M\models T$,  initial part $C\subset M$  and automorphism $f\in \Aut(M)$  fixing $C$ setwise, the mapping defined by: $g(x)=f(x)$ for $x\in C$ and $g(x)=x$ for $x\notin C$, is an automorphism of $M$.

\medskip\noindent 
We say that a linearly ordered structure satisfies the (strong) linear binarity condition if its complete theory does so.

\begin{rmk}
(a)  (LB) is a special case of (SLB) when we set $C= (-\infty,a]$ for arbitrary $a$.

\smallskip
(b)  In the definition of (SLB) one can take $C$ to be any convex set, not just an initial part. Namely, if $C$ is a  convex set fixed setwise  by $f$, we can consider the initial part $C'=\{x\mid x<C\}$ which is also fixed setwise by $f$. By (SLB) the function $h= f^{-1}\strok C'\cup id_{M\smallsetminus C'}$ is an automorphism. By (SLB) again, the function  $g$ which agrees with $g=f\circ h$ on the initial part $C'\cup C$ and is identity elsewhere, is also an automorphism. Note that $g$ agrees with $f$ on $C$ and is the identity elsewhere. 
\end{rmk}

\begin{rmk}
In (SLB) we may, equivalently, replace ``every model'' by: some $\omega_1$-saturated, strongly $\omega_1$-homogeneous model.  Indeed, suppose that an initial part $C$ of some model $M$, $f\in\Aut(M)$, $\bar b\in C$ and $\bar a\in M\smallsetminus C$ witness the failure of (SLB): $f(C)=C$ and $\tp(\bar b,\bar a)\neq\tp(f(\bar b),\bar a)$; we may  also assume that $\max  (\bar b) \leqslant\max (f(\bar b))$. Recursively define: $\bar b_0=\bar b$ and $\bar b_{n+1}= f(\bar b_n)$. If $N\models T$ is $\omega_1$-saturated, then a copy of $(\bar a,\bar b_0,\bar b_1,\dots )$ can be found in $N$; denote it by $(\bar a',\bar b_0',\bar b_1',\dots )$. 
If $N$ is in addition strongly $\omega_1$-homogeneous, then one of its automorphisms, say $g$,  moves   $(\bar b_0',\bar b_1',\dots )$ to  $(\bar b_1',\bar b_2',\dots )$. Then  $g$ fixes $C'=\bigcup_{n\in\omega} (-\infty,\max(\bar b_n'))$, so   $\bar a'$,  $\bar b_0'$ and $\bar b_1'=g(\bar b_0')$ witness the failure of (SLB) in $N$. 
\end{rmk}

\begin{lem}\label{Lem ccel orders have SLB}
Every \ccel-order satisfies  (SLB). 
\end{lem}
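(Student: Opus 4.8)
The plan is to fix a \ccel-order $M$ (an arbitrary model of $T$), an initial part $C\subset M$ and an automorphism $f\in\Aut(M)$ with $f(C)=C$, and to verify directly that the modified map $g$ is an automorphism. Since the language of $M$ consists only of the binary relation $<$, the unary predicates $P_i$ and the convex equivalence relations $E_j$, it suffices to check that $g$ is a bijection and that it preserves each of these relations in both directions. Bijectivity is immediate: $f\strok C$ is a bijection of $C$ onto $C$ (because $f(C)=C$), while $g$ is the identity on $M\smallsetminus C$, so $g$ restricts to a bijection on each block of the partition $\{C,M\smallsetminus C\}$.

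Preservation of $<$ and of the unary predicates is routine, and I would dispose of it by a short case analysis. For $<$: if $x,y$ both lie in $C$ we use that $f$ is an automorphism; if both lie outside $C$ the map is the identity; and if exactly one of them lies in $C$ then $g$ keeps it in $C$ (here we use $f(C)=C$) while the other stays outside, so the order between $g(x)$ and $g(y)$ matches that between $x$ and $y$ because $C$ is an initial part. For each $P_i$: on $C$ the equivalence $x\in P_i\Leftrightarrow g(x)\in P_i$ holds because $g=f$ there and $f$ preserves $P_i$, and off $C$ it holds because $g$ is the identity. Note that $C$ being an \emph{initial} part was used only for $<$; for the unary predicates any setwise-fixed $C$ would do.

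The only substantial point is the preservation of a convex equivalence relation $E:=E_j$, and this is where I expect the main difficulty to lie. The key claim is that at most one $E$-class meets both $C$ and $M\smallsetminus C$: since the $E$-classes are convex and pairwise $<$-separated and $C$ is cut out by a single cut, two distinct classes cannot each have points on both sides of that cut (if $K_1<K_2$ both straddled, a point of $K_1$ outside $C$ would lie below a point of $K_2$ inside $C$, contradicting that $C$ is initial). Call this unique class, if it exists, the straddling class $K$. Because $f$ preserves $E$ and fixes $C$ setwise, the image $f(K)$ is again an $E$-class meeting both $C$ and $M\smallsetminus C$, hence $f(K)=K$ by uniqueness. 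With this in hand I would finish by cases: when $x,y$ lie on the same side of the cut the argument is exactly as for $<$, and in the remaining case, say $x\in C$ and $y\notin C$, one has $E(x,y)$ iff $x$ and $y$ both lie in $K$; since $f(x)\in C$ and $f^{-1}(K)=K$, the condition $E(f(x),y)$ is likewise equivalent to $x,y\in K$, giving $E(x,y)\Leftrightarrow E(g(x),g(y))$. The symmetric case follows from the symmetry of $E$. This shows $g$ preserves all basic relations, so $g\in\Aut(M)$, and the main obstacle is precisely the uniqueness and $f$-invariance of the straddling class that make this final case work.
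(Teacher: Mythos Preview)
Your proof is correct and follows essentially the same approach as the paper: both reduce to checking that atomic formulae are preserved by $g$, handle $<$ and the unary predicates by straightforward case analysis, and isolate the one nontrivial case---an $E_j$-relation between an element of $C$ and one outside---by observing that the unique $E_j$-class straddling the cut (what the paper calls the ``end-class of $(C/E_j,<)$'') is fixed setwise by $f$. The paper's phrasing differs only cosmetically (it speaks of the end-class of the quotient rather than the straddling class, and it works with a convex $C$ rather than just an initial part), but the argument is the same.
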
 
\begin{proof}
Suppose that $(M,<,P_i,E_j)_{i\in I, j\in J}$ is a \ccel-order and that $f\in \Aut(M)$ fixes an initial part $C\subseteq M$.  To prove that the mapping $g$,  defined as in (SLB), is an automorphism it suffices to verify that for all  tuples $\bar a$ from $C$ and $\bar b$ from outside of $C$, the tuples $(\bar a,\bar b)$ and $(f(\bar a),\bar b)$ satisfy the same atomic formulae.  For formulae $x<y$ this is true since $f$ is an automorphism fixing $C$ setwise, and for unary predicates since $f$ is an automorphism. Consider the formula $E_j(x,y)$. If we interpret both $x$ and $y$ in $\bar a$, then the conclusion follows by $f\in\Aut (M)$;  for $x,y$  interpreted in  $\bar b$ it holds trivially. Hence, we are left with the case when the interpretations are $a_0\in\bar a$ and $b_0\in \bar b$. 
If $ (a_0,b_0)\in E_j$ holds, then   $[a_0]_{E_j}$ is the maximal element of  $(C/E_j,<)$ and so is fixed by $f$; in particular, $b_0\in [f(a_0)]_{E_j}$, i.e.\ $ (f(a_0),b_0)\in E_j$ holds. Similarly, $(f(a_0),b_0)\in E_j$    implies $ (a_0, b_0)\in E_j$, so these two are equivalent.  
\end{proof}

The linear finiteness condition for $T$ is defined by:

\medskip
\noindent
{\bf (LF)} \ For every model $M\models T$, initial part $C\subset M$ and formula $\phi(\bar x;\bar y)$, there are only finitely many $\phi$-types with parameters from $C$ that are realized in $M\smallsetminus C$. 

\begin{rmk}
By compactness it follows that an equivalent way of stating linear finiteness is: for every formula $\phi(\bar x;\bar y)$ there is an integer $n_\phi$ such that whenever $C$ is an initial part of a model $M\models T$, then there are at most $n_\phi$ \ $\phi$-types with parameters from $C$ that are satisfied in $M\smallsetminus C$. Moreover, we may restrict only to initial intervals $C=(-\infty, a)$ for $a\in M$. When stated in that form (LF) is expressible by a set of sentences, so is a part of $T$. 
\end{rmk}

Recall the well known connection between the numbers of $\phi$-types and $\phi^{op}$-types, where $\phi^{op}(\bar x;\bar y):=\phi(\bar y; \bar x)$. If there are $n$ $\phi$-types over $A$ that are realized in $B$, then there are at most $2^n$ $\phi^{op}$-types over $B$   realized in $A$: to see this, choose  representatives $\bar b_1,\dots ,\bar b_n\in B$ of all $\phi$-types over $A$ that are realized in $B$ and note that the $\phi^{op}$-type of $\bar a\in A$ over $B$ is determined by the sequence of truth values of $\phi(\bar a;\bar b_i)$ ($1\leqslant i\leqslant n$); there are at most $2^n$ such sequences. 

\begin{rmk}
(a) If we refer to  final instead of initial parts in the definition of (LF), then we get an equivalent statement.
For one direction, note that if $F$ is a final part, then $M\smallsetminus F$ is an initial part and (LF) implies that finitely many $\phi^{op}$-types over $M\smallsetminus F$ are realized in $F$, hence finitely many $(\phi^{op})^{op}$-types (i.e.\ $\phi$-types) over $F$ are realized in $M\smallsetminus F$. The other direction is proved similarly. 

\smallskip
(b)  Similarly, replacing  initial by convex parts keeps the sense of the definition.

\smallskip
(c)  Adding parameters to the language preserves (LF); in other words, if (LF) holds, then the finiteness of $\phi$-types holds also for formulae with parameters. To sketch this, assume (LF), let  $\phi(\bar x,\bar y;\bar a)$ be  a formula with parameters $\bar a$ and let $C\subseteq M$ be an initial part of a model $M$. 
Write $\bar a=\bar a'\bar a''$, where $\bar a'\in C<\bar a''$, and consider $\phi(\bar x,\bar y;\bar z)$ as $\psi(\bar x,\bar z'';\bar y,\bar z')$ where $\bar z=\bar z'\bar z''$. If $(\bar b_n\mid n\in\omega)$ were a sequence of tuples from $M\smallsetminus C$ realizing distinct $\phi$-types over $C$, then the tuples $(\bar b_n\bar a''\mid n\in\omega)$   from $M\smallsetminus C$ would realize distinct $\psi$-types over  $C$;  the latter is impossible by (LF). 
\end{rmk}

\begin{lem}\label{lem LF equivalents} Each of the following conditions is equivalent to $T$ satisfying  (LF):
\begin{enumerate}
\item For all models $M\models T$, initial parts $C\subset M$ and formulae $\phi(x;\bar y)$ ($x$ is a single variable),  only finitely many $\phi$-types with parameters from $C$ are realized in $M\smallsetminus C$.  
\item For all models $M\models T$ and initial parts $C\subset M$   at most $2^{|T|}$ types from $S_1(C)$   are realized in $M\smallsetminus C$. 
\item For all $n\in\mathbb N$,  models $M\models T$ and initial parts $C\subset M$   at most $2^{|T|}$ types from $S_n(C)$   are realized in $M\smallsetminus C$. 
\end{enumerate}
\end{lem}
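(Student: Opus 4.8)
The plan is to prove the lemma by establishing the cycle of implications $(LF)\Rightarrow(1)\Rightarrow(2)\Rightarrow(3)\Rightarrow(LF)$. The first implication is immediate: $(1)$ is nothing but $(LF)$ restricted to partitioned formulas $\phi(x;\bar y)$ with a single object variable. For $(1)\Rightarrow(2)$ I would argue by a counting estimate. A complete type from $S_1(C)$ realized above an initial part $C$ is determined by the family of its $\phi$-types, one for each single-variable partitioned formula $\phi(x;\bar y)$; by $(1)$ only finitely many $\phi$-types over $C$ are realized above $C$, and there are at most $|T|$ such formulae up to equivalence modulo $T$, so the number of these complete $1$-types is at most $\aleph_0^{|T|}=2^{|T|}$.

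For $(2)\Rightarrow(3)$ I would induct on $n$, the case $n=1$ being exactly $(2)$. Fix an initial part $C$; since $C$ is initial, every tuple realized in $M\smallsetminus C$ lies entirely above $C$. Splitting such tuples according to their order-shape (the finitely many ways in which the coordinates can be ordered and identified) reduces the count to that of strictly increasing tuples $a_1<\dots<a_n$ above $C$, each shape contributing at most the number of types of the increasing tuple of its distinct coordinates. For such a tuple, $\tp(a_1,\dots,a_n/C)$ is determined by $\tp(a_1/C)$ together with $\tp(a_2,\dots,a_n/C\cup\{a_1\})$: by $(2)$ there are at most $2^{|T|}$ possibilities for the former, and, since $a_2,\dots,a_n$ lie above the initial part $C'=C\cup(-\infty,a_1]$ and types over $C\cup\{a_1\}\subseteq C'$ are coarser than types over $C'$, the induction hypothesis applied to $C'$ bounds the latter by $2^{|T|}$ (uniformly in $a_1$, using the homogeneity of $\Mon$). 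Multiplying and summing over the finitely many shapes keeps the bound at $2^{|T|}$.

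The implication $(3)\Rightarrow(LF)$ is the crux, and I would prove it contrapositively. Suppose $(LF)$ fails: some $\phi(\bar x;\bar y)$ with $|\bar x|=k$ admits, over an initial part $C$ of $\Mon$, infinitely many $\phi$-types realized above $C$. For each pair $i\neq j$ the distinctness of two such $\phi$-types is witnessed by a single parameter tuple $\bar c_{ij}\in C$ with $\lnot(\phi(\bar e_i,\bar c_{ij})\leftrightarrow\phi(\bar e_j,\bar c_{ij}))$, and such witnesses are available for arbitrarily large finite families; hence the type, in variables $(\bar e_i)_{i<\kappa}$ and $(\bar c_{ij})_{i<j<\kappa}$, asserting that every coordinate of each $\bar c_{ij}$ is below every coordinate of each $\bar e_l$ together with all the distinctness formulae above, is finitely satisfiable in $\Mon$. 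Taking $\kappa=(2^{|T|})^{+}$ and applying compactness produces a model $M^*\models T$ in which, letting $C^*$ be the initial part consisting of all elements lying below every coordinate of every $\bar e_i$, each $\bar c_{ij}$ belongs to $C^*$ while each $\bar e_i$ lies above $C^*$. Since $\bar c_{ij}\in C^*$ distinguishes $\bar e_i$ and $\bar e_j$, the tuples $\bar e_i$ realize $\kappa>2^{|T|}$ pairwise distinct $\phi$-types, hence more than $2^{|T|}$ complete $k$-types, over $C^*$ above $C^*$, contradicting $(3)$.

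The main obstacle is exactly this last step: recovering genuine finiteness of $\phi$-types ($(LF)$) from the mere cardinality bound $2^{|T|}$ ($(3)$). What makes it work is the observation that ``having distinct $\phi$-types'' is a finitary condition, witnessed one distinguishing parameter at a time, so an infinite supply of distinct types over a \emph{fixed} set can be amplified by compactness into a supply of size $(2^{|T|})^{+}$ over a suitably chosen \emph{larger} initial part; the strict inequality $(2^{|T|})^{+}>2^{|T|}$ is precisely why one must stretch to length $(2^{|T|})^{+}$ rather than merely $2^{|T|}$.
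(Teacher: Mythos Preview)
Your proof is correct and follows essentially the same cycle $(LF)\Rightarrow(1)\Rightarrow(2)\Rightarrow(3)\Rightarrow(LF)$ as the paper, with the same key ideas at each step. One point deserves care: in your $(2)\Rightarrow(3)$ the phrase ``uniformly in $a_1$, using the homogeneity of $\Mon$'' is doing real work, since the counting $\tp(\bar a/C)\mapsto(\tp(a_1/C),\tp(a_2,\dots,a_n/Ca_1))$ only yields the product bound once realizations of a fixed $p=\tp(a_1/C)$ are conjugate over $C$; the paper makes this explicit by passing to an $|M|^+$-saturated, strongly $|M|^+$-homogeneous $M'\succ M$ before invoking automorphisms (so that $|C|$ is below the homogeneity degree), and you should do the same.
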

\begin{proof}
(LF)$\Rightarrow$(1) is trivial. To prove (1)$\Rightarrow$(2) suppose that (1) holds,  that $C$ is an initial part of $M$ and $b\in M\smallsetminus C$.
Since  $\tp(b/C)$ is uniquely determined by $\{\tp_\phi(b/C)\mid \phi(x;\bar y)\in L\}$ there are at most $2^{|T|}$ possibilities for $\tp(b/C)$.

\smallskip
(2)$\Rightarrow$(3) Assuming (2) we prove (3) by induction on $n$.
Toward contradiction suppose that $C$ is an initial part of $M$  and that more than $2^{|T|}$ types from $S_{n+1}(C)$ are realized in $M\smallsetminus C$; let $\bar b_\alpha= b_{\alpha,1}\dots b_{\alpha,n}b_{\alpha,n+1}$ for $\alpha<(2^{|T|})^+$ be tuples from $M\smallsetminus C$ having pairwise distinct types over $C$. We may assume that for every $\alpha$, $b_{\alpha,1}<\dots<b_{\alpha,n}<b_{\alpha,n+1}$ holds. By induction hypothesis we may further assume (by extracting a subsequence) that $\tp(b_{\alpha,1},\ldots,b_{\alpha,n}/C)$ is constant. Let $M'\succ M$ be a $|M|^+$-saturated and strongly $|M|^+$-homogeneous model and let $f_\alpha\in\Aut(M'/C)$ be such that $f_\alpha(b_{\alpha,1}\dots b_{\alpha,n})= b_{0,1}\dots b_{0,n}$ for each $\alpha<(2^{|T|})^+$. Then $\tp(b_{0,1}\dots b_{0,n}f_\alpha(b_{\alpha,n+1})/C)$'s are pairwise distinct, so $\tp(f_\alpha(b_{\alpha,n+1})/Cb_{0,1}\dots b_{0,n})$'s are pairwise distinct too. From here $\tp(f_\alpha(b_{\alpha,n+1})/(-\infty,b_{0,n}])$'s are pairwise distinct, which contradicts (2) as $Cb_{0,1}\dots b_{0,n}\subseteq(-\infty,b_{0,n}]$ and $f(b_{\alpha,n+1})>b_{0,n}$.

\smallskip
(3)$\Rightarrow$(LF) Suppose that (LF) fails. Let $\phi(\bar x;\bar y)$ be a formula and let  $C$ be an initial part of a model $M$  such that  infinitely many $\phi$-types over $C$ are realized in $M\smallsetminus C$. By compactness, it is easy to find  an elementary   extension $M'$ and its initial part $C'$ containing $C$ such that $M'\smallsetminus C'$ realizes more than $2^{|T|}$ $\phi$-types over $C'$; clearly, $M'\smallsetminus C'$ realizes more than $2^{|T|}$ types from $S_{|\bar x|}(C')$.
\end{proof}
 
\begin{prop}\label{prop RB SLB LB LF}
(RB) $\Rightarrow$
(SLB) $\Rightarrow$ (LB) $\Rightarrow$(LF).
\end{prop}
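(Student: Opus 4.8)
The plan is to treat the three implications separately; the first two are short, and the last, (LB)$\Rightarrow$(LF), carries the real content. For (RB)$\Rightarrow$(SLB) I would verify (SLB) in $\Mon$, which suffices by the remark allowing us to test (SLB) in a single $\omega_1$-saturated strongly $\omega_1$-homogeneous model. Fix an initial part $C$, an $f\in\Aut(\Mon)$ with $f(C)=C$, and let $g$ be the associated map. It is enough to show that $g$ preserves the type of every increasing tuple $a_1<\dots<a_n$. Writing $a_1,\dots,a_k\in C$ and $a_{k+1},\dots,a_n\notin C$, the tuple $g(\bar a)=(f(a_1),\dots,f(a_k),a_{k+1},\dots,a_n)$ is again increasing, and I would compare $\tp(\bar a)$ with $\tp(g(\bar a))$ via (RB). The singleton condition holds since $f$ is an automorphism; the sequence-between condition is clear for consecutive pairs inside $C$ (apply $f$) and for those outside $C$ (they are unchanged). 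The only delicate pair is the boundary one, $(a_k,a_{k+1})$ versus $(f(a_k),a_{k+1})$: given witnesses $c_1<\dots<c_m$ of a sequence $\vec\phi$ between $a_k$ and $a_{k+1}$, I would apply $f$ to those $c_i$ lying in $C$ and leave the rest fixed; since $f(C)=C$ and $C$ is initial, the resulting points still witness $\vec\phi$ between $f(a_k)$ and $a_{k+1}$, and symmetrically via $f^{-1}$. Hence (RB) yields $\tp(\bar a)=\tp(g(\bar a))$, so $g\in\Aut(\Mon)$.

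The implication (SLB)$\Rightarrow$(LB) is immediate from the remark that (LB) is the instance of (SLB) with $C=(-\infty,a]$: for $a\in M$ and $f\in\Aut(M)$ fixing $a$, the set $C=(-\infty,a]$ is fixed setwise by $f$, so (SLB) produces exactly the automorphism asked for in condition (3) of Lemma \ref{lem LB equiv}, and that condition is equivalent to (LB).

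For (LB)$\Rightarrow$(LF), by Lemma \ref{lem LF equivalents} it suffices to prove condition (2): for every initial part $C$ of every model, at most $2^{|T|}$ types of $S_1(C)$ are realized among the elements above $C$ (recall $\Mon\smallsetminus C$ lies entirely above $C$). The key is an anchoring argument. I would produce an element $\hat c$ — in $\Mon$, or, when the cut of $C$ is a genuine gap, in an elementary extension $\Mon^+$ — sitting immediately above $C$, i.e. $c<\hat c\leq b$ for every $c\in C$ and every $b>C$ in $\Mon$; its existence follows from compactness, since the partial type $\{c<x\mid c\in C\}\cup\{x\leq b\mid b>C\}$ is finitely satisfiable. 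Then, for any $b>C$ and any increasing $\bar c$ from $C$, the tuple $(\bar c,\hat c,b)$ is increasing, so by (LB) its type — in particular $\tp(\bar c,b)$ — is determined by the consecutive $2$-types: those internal to $\bar c\hat c$ (independent of $b$) together with $\tp(\hat c,b)$. Hence $\tp(b/C)$ is determined by the single $2$-type $\tp(\hat c,b)$, and as there are at most $|S_2(T)|\leq 2^{|T|}$ possibilities, at most $2^{|T|}$ types over $C$ are realized above $C$. This is condition (2), and (LF) follows by Lemma \ref{lem LF equivalents}.

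The step I expect to be the main obstacle is precisely the case where $C$ has no maximum and the complement has no minimum: then no element of the given model sits just above $C$, so the clean ``anchor'' computation is a priori unavailable. The device that overcomes this is to fill the gap by passing to $\Mon^+$, which is legitimate because (LF), equivalently condition (2), is a property of $T$ and because $\tp(b/C)$ is unchanged under elementary extension. I would double-check the finitely-many-$2$-types bookkeeping (that the reduction of $\tp(b/C)$ to $\tp(\hat c,b)$ really only invokes (LB) applied to $(\bar c,\hat c,b)$) and the degenerate cases $\hat c=\max C$ and $\hat c=b$, where the relevant tuple has fewer than three strictly increasing entries; these are harmless but must be verified to make the uniform $2^{|T|}$ bound airtight.
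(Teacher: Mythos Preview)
Your proposal is correct and follows essentially the same approach as the paper: the arguments for (RB)$\Rightarrow$(SLB) and (SLB)$\Rightarrow$(LB) are identical, and for (LB)$\Rightarrow$(LF) both proofs rest on the same anchoring idea---place an element $\hat c$ at the cut determined by $C$ (in an elementary extension if the cut is a genuine gap) and use (LB) to see that $\tp(b/C)$ is controlled by the single $2$-type $\tp(\hat c,b)$. The only difference is cosmetic: the paper argues by contrapositive and splits into the two cases $C=(-\infty,a]$ or $(-\infty,a)$ versus a gap, whereas you argue the bound directly and uniformly; your worry about the edge case ``$\hat c=\max C$'' is moot, since your partial type forces $\hat c>c$ for every $c\in C$.
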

\begin{proof}
(RB)$\Rightarrow$(SLB)  Assume that $M$ is saturated and that $C$, $f$, and $g$ are as in (SLB). It suffices to prove that any tuple and its $g$-image have the same type. Take a tuple $\bar a\bar b$ such that $\bar a\in C$ and $C<\bar b$; its $g$-image is the tuple $f(\bar a)\bar b$. If $\bar a=(a_1,\dots,  a_m)$ and $\bar b= (b_1,\dots,  b_n)$, where $a_1<\dots <a_m$ and $b_1<\dots <b_n$, by (RB) we need  only to check that the same finite sequences of formulae are satisfied between $a_m$ and $b_1$ as between $f(a_m)$ and $b_1$ (because this condition on other intervals trivially holds). Let $(\phi_1(x),\dots ,\phi_k(x))$ be a sequence of formulae satisfied by $c_1<\dots <c_k$ between $a_m$ and $b_1$. We may assume that $c_1,\dots ,c_l\in C$ and $C<c_{l+1}$. Then the same sequence is satisfied by $f(c_1)<\dots <f(c_l)<c_{l+1}<\dots <c_k$ between $f(a_m)$ and $b_1$. Similarly, all sequences of formulae satisfied between $f(a_m)$ and $b_1$ are satisfied between $a_m$ and $b_1$.

\smallskip
 The implication (SLB)$\Rightarrow$(LB) has already been explained, so it remains to prove 
(LB)$\Rightarrow$(LF). 
Suppose that  (LF) fails. By Lemma \ref{lem LF equivalents} there is a  model $M$ and an initial part $C\subseteq M$  such that $M\smallsetminus C$ realizes at least $(2^{|T|})^+$ complete 1-types over $C$. If $C=(-\infty,a)$ or  $C=(-\infty,a]$ for some $a\in M$, then we can find two members of $(a,+\infty)$ that have the same type over $a$  but distinct types over $(-\infty,a)$, implying the failure of  (LB). 
If $C\neq(-\infty,a)$ and $C\neq(-\infty,a]$ for every $a\in M$, then the set $\{c<x\mid c\in C\}\cup \{x<c\mid c\in M\smallsetminus C\}$ is a partial type, so there exists an elementary extension $M'$ of $M$ and $b\in M'$ realizing it. Now, in $M'$ we have $C<b<M\smallsetminus C$, so we can find two elements in   $(b,+\infty)$ that have the same type over $b$  but distinct types over $(-\infty,b)$, implying the failure of  (LB).
\end{proof}

The next examples  show  that the implications in Proposition \ref{prop RB SLB LB LF} are strict.

\begin{exm}\label{Example T_n}
Consider the structure $(\mathbb Q,<,E_n)$ consisting of the ordered rationals expanded by an equivalence relation $E_n$ having $n\geqslant 2$ classes such that each class is (topologically) dense in $\mathbb Q$. The complete theory $T_n$ of this structure is easily seen to be $\aleph_0$-categorical and to eliminate quantifiers. With these in hand, it is 
easy to count $\phi$-types and to conclude that $T_n$ satisfies (LF).

\medskip
{\it Claim 1.}  \ $T_n$ satisfies (LB) if and only if $n=2$.

That $T_2$ satisfies (LB) follows by elimination of quantifiers, so suppose  $n\geqslant 3$. Let $a<b<c$ be rational numbers from   distinct $E_n$-classes, and let $a'<b$ be such that $E_n(a',c)$ holds. Then the triples $(a,b,c)$ and $(a',b,c)$ realize distinct 3-types, while their consecutive pairs realize the same types. Hence, condition (LB) is not satisfied.

\medskip 
{\it Claim 2.} \  $T_n$ does not satisfy (SLB).  

Let  $C=(-\infty,\sqrt 2)\cap \mathbb Q$. By a standard back-and-forth construction, an automorphism $f$ of $(\mathbb Q,<,E_n)$   fixing $C$ setwise and switching two $E_n$-classes  can be constructed. Then the mapping defined by $g(x)=f(x)$ for $x\in C$ and $g(x)=x$ for $x\notin C$ is not an automorphism.  

\medskip 
Therefore, $T_2$ satisfies (LB) and $\lnot$(SLB), while $T_n$ for $n\geqslant 3$ satisfies (LF) and $\lnot$(LB).
\end{exm}

\begin{exm}  A theory satisfying   (SLB) but not (RB).\\
Consider the theory $T$ of a \ccel-order $(\mathbb Q\times \mathbb Q,<,E)$, where $<$ is the lexicographic  order and  $((q_1,r_1),(q_2,r_2))\in E$ if and only if $q_1=q_2$.  $T$ eliminates quantifiers and  has   a single complete 1-type. The formula $x<y$ has two completions in $S_2(T)$; both of them realize the same sequences of formulae, so (RB) fails. On the other hand,  (SLB) holds  by Lemma \ref{Lem ccel orders have SLB} since our structure is a \ccel-order.
\end{exm}

\begin{exm}\label{exm Lb non binary}  (LF) does not imply binarity.\\
Expand the model $(\mathbb Q,<,E_8)$   of $T_8$ by adding a $4$-ary relation $P$ in the following way: endow $\mathbb Q/E_8$ with the structure of $(\mathbb Z/2\mathbb Z)^3$ and define: $P(x,y,z,t)$ iff $[x]_{E_8}+[y]_{E_8}+[z]_{E_8}+[t]_{E_8}=0$. The theory of the expansion is not binary, although it has property (LF).
  \end{exm}

\section{Convex sets in (LF)-theories}

In this section, we deal exclusively with linearly finite structures. The main result is Theorem \ref{Thm LF descr convex sets}, in which we describe their parametrically definable convex sets. The main ingredients of the proof are contained in   Lemmas \ref{lem def init one param} and   \ref{Lema_monotone fibers} in which we prove that every parametrically definable initial part has a one-parameter  definition of the form  $x\leqslant S^n_E(a)$ or 
 $x< S^n_E(a)$ for  some convex equivalence relation $E$ and integer $n$ (these formulae are precisely defined in \ref{Definition S^n_E}). We prove in Theorem \ref{Thm_description LF functions} that every definable unary function is the union of finitely many successor-functions.

  We start by establishing a version of  monotonicity for linearly finite structures.

\begin{lem}\label{Lema_monotonicity}  (LF) Let $\phi(x,y)$ be a formula with parameters $\bar c$. Then
 $\tp(a/\bar c)=\tp(a'/\bar c)$ and $a\leqslant a'$ imply  $\sup\phi(\Mon,a)\leqslant\sup\phi(\Mon,a')$.
\end{lem}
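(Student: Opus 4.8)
\emph{The plan} is to argue by contradiction, extracting from a failure of monotonicity an infinite order-property configuration \emph{across a cut}, which is incompatible with (LF). We may assume $a<a'$, as the case $a=a'$ is trivial. Suppose toward a contradiction that $\sup\phi(\Mon,a)>\sup\phi(\Mon,a')$. Since $\tp(a/\bar c)=\tp(a'/\bar c)$ and $\Mon$ is strongly homogeneous, fix $\sigma\in\Aut(\Mon/\bar c)$ with $\sigma(a)=a'$ and set $a_i=\sigma^i(a)$. As $\sigma$ is order-preserving and $a_0<a_1$, the sequence $(a_i)_{i<\omega}$ is strictly increasing and every $a_i$ realizes $\tp(a/\bar c)$. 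Because $\sigma$ fixes $\bar c$, it maps $\phi(\Mon,a_i)$ onto $\phi(\Mon,a_{i+1})$ order-preservingly, so applying $\sigma^i$ to the assumed strict inequality yields $\sup\phi(\Mon,a_i)>\sup\phi(\Mon,a_{i+1})$ for every $i$; thus the suprema strictly decrease along the chain. (In particular the suprema are genuine cuts, so no $\phi(\Mon,a_i)$ is empty, and only the first can be unbounded above.)

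Next I would introduce the $L(\bar c)$-formula $\theta(y,z):=\forall x\,(\phi(x,y)\to x<z)$, expressing that $z$ is a strict upper bound of $\phi(\Mon,y)$. Using the \emph{strict} decrease of the suprema, for each $i$ I can pick $b_i\in\phi(\Mon,a_i)$ that is at the same time a strict upper bound of $\phi(\Mon,a_{i+1})$; uniformly, take $b_i=\sigma^i(b_0)$. Since $b_{i+1}\in\phi(\Mon,a_{i+1})$ while $b_i$ bounds $\phi(\Mon,a_{i+1})$ strictly, the $b_i$ are strictly decreasing. A short check, relying only on the strict monotonicity of the suprema, then gives the clean half-graph pattern $\models\theta(a_j,b_i)\iff j>i$: for $j\geq i+1$ one has $\sup\phi(\Mon,a_j)\leq\sup\phi(\Mon,a_{i+1})<b_i$, so $b_i$ bounds $\phi(\Mon,a_j)$; for $j\leq i$ one has $\sup\phi(\Mon,a_j)\geq\sup\phi(\Mon,a_i)\geq b_i$ with the inequality strict whenever $j<i$, so $b_i$ is not a bound (and for $j=i$ it fails because $b_i\in\phi(\Mon,a_i)$).

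Finally I would localize this configuration to contradict (LF); this is the main obstacle, since the opposite monotone sequences $a_j\uparrow$ and $b_i\downarrow$ need not separate automatically. Let $\alpha=\sup_j a_j$ and $\beta=\inf_i b_i$ be the associated cuts and split into two cases. If $\alpha\leq\beta$, choose a cut $\gamma$ with $\alpha\leq\gamma\leq\beta$ and set $C=\{x\mid x<\gamma\}$; then every $a_j\in C$ and every $b_i\notin C$, and by the half-graph the elements $b_i\in\Mon\smallsetminus C$ realize pairwise distinct $\theta^{op}$-types over the parameters $\{a_j\}\subseteq C$ (the type of $b_i$ recording $\{a_j\mid j>i\}$). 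If instead $\beta<\alpha$, pick a cut $\gamma$ strictly between $\beta$ and $\alpha$, so that cofinally many $b_i<\gamma$ and cofinally many $a_j>\gamma$; with $C=\{x\mid x<\gamma\}$ the realizations $a_j\in\Mon\smallsetminus C$ realize pairwise distinct $\theta$-types over the parameters $\{b_i\mid b_i<\gamma\}\subseteq C$ (the type of $a_j$ recording $\{b_i\mid i<j\}$). Either way we obtain infinitely many complete $\theta$- (resp.\ $\theta^{op}$-) types with parameters from an initial part $C$ that are realized in $\Mon\smallsetminus C$, contradicting (LF)---here invoking the remark that (LF) is preserved under adding the parameters $\bar c$ to the language, so that it applies to the $L(\bar c)$-formula $\theta$. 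Hence $\sup\phi(\Mon,a)\leq\sup\phi(\Mon,a')$, as claimed.
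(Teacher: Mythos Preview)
Your argument is correct and follows the same strategy as the paper: iterate an automorphism to get an increasing orbit $(a_i)$ with strictly decreasing suprema $\sup\phi(\Mon,a_i)$, choose witnesses $b_i$ between consecutive suprema, and contradict (LF) across an initial part. The one place where the paper is cleaner is your final $\alpha$-versus-$\beta$ case split, which it avoids entirely. The paper first observes that, since $\tp(a/\bar c)=\tp(a'/\bar c)$, the element $a$ sits in the same position relative to $\sup\phi(\Mon,a)$ as $a'$ does relative to $\sup\phi(\Mon,a')$; treating the case $a<\sup\phi(\Mon,a)$ (the other being dual) one obtains $a<a'<\sup\phi(\Mon,a')<\sup\phi(\Mon,a)$, and hence $a_n<a_{n+1}<\sup\phi(\Mon,a_{n+1})<\sup\phi(\Mon,a_n)$ for every $n$. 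This forces every $a_m$ to lie below every $b_n$, so the initial part $I=\bigcup_n(-\infty,a_n)$ separates the two sequences outright and your Case~2 never occurs. (A minor aside: your parenthetical ``only the first can be unbounded above'' is slightly off---since $\sigma$ is an order-automorphism mapping $\phi(\Mon,a_0)$ onto $\phi(\Mon,a_1)$, none of the $\phi(\Mon,a_i)$ can be unbounded---but this does not affect the argument.)
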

\begin{proof}
Suppose, on the contrary,  that   $\sup\phi(\Mon,a')<\sup\phi(\Mon,a)$ holds for some elements $a<a'$ realizing the same 1-type over $\bar c$. Clearly,    $\sup\phi(\Mon,a)\neq a$ and we will continue the proof assuming   $a<\sup\phi(\Mon,a)$; the proof in the other case is similar. Then $\tp(a)=\tp(a')$ implies:  
$$a<a'<\sup\phi(\Mon,a')<\sup\phi(\Mon,a).$$
Choose a sequence $(a_n\mid n\in\omega)$   satisfying $\tp(a_n,a_{n+1}/\bar c)=\tp(a,a'/\bar c)$. Then:
$$a_0<a_1<a_2<\dots  <\sup\phi(\Mon,a_2)<\sup\phi(\Mon,a_1)< \sup\phi(\Mon,a_0).$$
Let $\psi(x;y)$ be a formula  saying $x< \sup\phi(\Mon,y)$. For each $n\in \omega$ choose an element    $b_n\in \Mon$ satisfying \ $\sup\phi(\Mon,a_{n+1})<b_n\leqslant\sup\phi(\Mon,a_n)$. Note that distinct $b_n$'s realize distinct $\psi$-types over the  parameters  $(a_n\mid n\in\omega)$. Hence,   infinitely many $\psi$-types over  the parameters  
$I=\bigcup_{n\in\omega}(-\infty,a_n)$  are realized in $\Mon\smallsetminus I$. A contradiction.
\end{proof}

\begin{lem}\label{lema o ocuvanju itp} (LF) \  Suppose that   $\Pi\in IT(A)$.
\begin{enumerate}[(a)]
\item If  $B<\Pi(\Mon)$ or $\Pi(\Mon)<B$, then \ $\Pi\ior\,\Pi\,|\, AB$.
\item If   $B<\Pi(\Mon)<C$, then \ $\Pi\ior\,\Pi\,|\, ABC$.
\end{enumerate}
\end{lem}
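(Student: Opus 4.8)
The plan is to deduce (b) from (a) and to prove (a) by a single construction contradicting (LF). For (a) I would invoke Fact~\ref{lema itp on cutting}(a): since $\Pi\,|\,AB$ is the relevant extension, it suffices to show that \emph{no $AB$-definable initial part cuts $\Pi$}. So suppose, toward a contradiction and treating the case $B<\Pi(\Mon)$ (the case $\Pi(\Mon)<B$ being symmetric, via final parts and the order-reversal symmetry of (LF)), that some $AB$-definable initial part $D=\delta(\Mon,\bar b)$, with $\delta(x,\bar y)\in L(A)$ and $\bar b\in B^{|\bar y|}$, $\bar b<\Pi(\Mon)$, cuts $\Pi$. As recorded just before Fact~\ref{lema itp on cutting}, I may then pick $a,a''\models\Pi$ with $\tp(a/A)=\tp(a''/A)$, $a\in D$ and $a''>D$; in particular $a<a''$, $\models\delta(a,\bar b)$ and $\models\neg\delta(a'',\bar b)$.

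The heart of the argument is to iterate this one cut by an automorphism into infinitely many. Choose $\sigma\in\Aut(\Mon/A)$ with $\sigma(a)=a''$ and set $a_n=\sigma^n(a)$, $\bar b_n=\sigma^n(\bar b)$. Since $x<y$ lies in $\tp(a,a''/A)=\tp(a_n,a_{n+1}/A)$, the sequence $a_0<a_1<a_2<\dots$ is strictly increasing with all $a_n\models\Pi$. The decisive point is that $\sigma$ fixes $A$, hence fixes the convex set $\Pi(\Mon)$ (which is type-definable over $A$) setwise; therefore $a_n\in\Pi(\Mon)$ and $\bar b_n<\Pi(\Mon)$ for \emph{every} $n$. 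Applying $\sigma^n$ to $\models\delta(a,\bar b)\wedge\neg\delta(a'',\bar b)$ yields $\models\delta(a_n,\bar b_n)\wedge\neg\delta(a_{n+1},\bar b_n)$, so each initial part $D_n:=\delta(\Mon,\bar b_n)$ satisfies $\sup D_n\in[a_n,a_{n+1})$.

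I then read off the pattern of these cuts against the increasing sequence. For $m\leqslant n$ we have $\sup D_m<a_{m+1}\leqslant a_{n+1}$, so $a_{n+1}\notin D_m$; for $m\geqslant n+1$ we have $a_{n+1}\leqslant a_m\in D_m$ with $D_m$ initial, so $a_{n+1}\in D_m$. Hence $\models\delta(a_{n+1},\bar b_m)$ if and only if $m\geqslant n+1$, and the elements $a_1,a_2,\dots$ realize pairwise distinct $\delta$-types over $\{\bar b_m\mid m\in\omega\}$. Now let $I=\{x\mid x<\Pi(\Mon)\}$, an initial part with $\bar b_m\in I^{|\bar y|}$ for all $m$ and with every $a_n\in\Pi(\Mon)\subseteq\Mon\smallsetminus I$. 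Thus infinitely many $\delta$-types with parameters from $I$ are realized in $\Mon\smallsetminus I$, contradicting (LF); this proves (a).

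For (b), assume $B<\Pi(\Mon)<C$. By (a) applied with $B$ below, $\Pi\ior\,\Pi\,|\,AB$; writing $\Pi'=\Pi\,|\,AB$, uniqueness forces $\Pi'(\Mon)=\Pi(\Mon)$, since each $c\in\Pi(\Mon)$ realizes an interval type over $AB$ extending $\Pi$, which must be $\Pi'$. As $\Pi'(\Mon)=\Pi(\Mon)<C$, (a) applied to $\Pi'\in IT(AB)$ with $C$ above gives $\Pi'\ior\,\Pi'\,|\,ABC$. Chaining: any interval type over $ABC$ extending $\Pi$ has all its realizations in $\Pi(\Mon)=\Pi'(\Mon)$, hence extends $\Pi'$, hence equals $\Pi'\,|\,ABC$; so the extension of $\Pi$ to $ABC$ is unique, i.e.\ $\Pi\ior\,\Pi\,|\,ABC$. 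The step I expect to be most delicate is the bookkeeping in (a): one must verify that iteration by an $A$-automorphism keeps \emph{all} parameters $\bar b_n$ below $\Pi(\Mon)$ while keeping the distinguishing elements $a_n$ inside $\Pi(\Mon)$ — this clean separation into ``parameters in $I$, realizations outside $I$'' is precisely where the one-sided hypothesis $B<\Pi(\Mon)$ is used in an essential way.
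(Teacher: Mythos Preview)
Your proof is correct and follows essentially the same route as the paper's: both argue (a) by contradiction via Fact~\ref{lema itp on cutting}(a), pick two realizations of $\Pi$ of the same $A$-type on opposite sides of the cutting initial part, iterate by an $A$-automorphism to push the parameters along while keeping them below $\Pi(\Mon)$, and read off infinitely many $\delta$-types over the initial part $\{t\mid t<\Pi(\Mon)\}$, contradicting (LF). Your treatment of (b) by chaining two applications of (a) (and verifying $\Pi'(\Mon)=\Pi(\Mon)$) is a clean way to make explicit what the paper dismisses as ``similar''; one cosmetic remark is that the line ``$\sup D_n\in[a_n,a_{n+1})$'' is not literally needed and is slightly informal in the paper's sup-notation---the conclusion $a_{n+1}\notin D_m$ for $m\leqslant n$ follows directly from $a_{n+1}\geqslant a_{m+1}\notin D_m$ and $D_m$ being initial.
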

\begin{proof}
We will prove only that $B<\Pi(\Mon)$ implies  $\Pi\ior\,\Pi\,|\, AB$; the other parts can be proven in a similar way.  Toward contradiction, assume that $B<\Pi(\Mon)$ and that $\Pi\ior \Pi\,|\,AB$ does not hold. By Fact \ref{lema itp on cutting} there are $\bar b\in B$ and an $A\bar b$-definable initial part $D(\bar b)$ cutting $\Pi$ (we stress only parameters from $B$). Let $a_0,a_1\models\Pi$ satisfy $a_0\in D(\bar b)<a_1$ and, without loss of generality, we can assume $\tp(a_0/A)=\tp(a_1/A)$. Choose an automorphism $f\in \Aut(\Mon/A)$ satisfying $f(a_0)=a_1$ and define: $a_{n+1}=f(a_n)$,   $\bar b_0=\bar b$ and $\bar b_{n+1}=f(\bar b_n)$ (for all $n\in\omega$). Since  $\bar b_0<\Pi(\Mon)$ holds (because of $B<\Pi(\Mon)$) and since $f$ fixes $\Pi(\Mon)$ set-wise, by induction, we conclude that $\bar b_n<\Pi(\Mon)$ holds for all $n\in\omega$. Similarly, $a_n\in D(\bar b_n)<a_{n+1}$ holds for all $n\in\omega$. Therefore, all the $a_n$'s have different $\phi$-types over the initial part $\{t\in\Mon\mid t<\Pi(\Mon)\}$, where $\phi(x,\bar y)$ is the formula $x\in D(\bar y)$; this contradicts (LF).
\end{proof}

\begin{lem}\label{lema pomocna za 4 5} (LF) Suppose that an $ab$-definable initial part $D$  cuts the interval $(a,b)$ and that $D$ is neither $a$-definable nor $b$-definable. Then $D$ cuts both $\itp(b/a)$ and $\itp(a/b)$. 
\end{lem}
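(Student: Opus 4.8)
The plan is to establish the two assertions --- that $D$ cuts $\itp(b/a)$ and that $D$ cuts $\itp(a/b)$ --- separately, each through its contrapositive. For the first I would show that if $D$ does \emph{not} cut $\itp(b/a)$, then $D$ is $a$-definable, contradicting the hypothesis; for the second, that failing to cut $\itp(a/b)$ forces $D$ to be $b$-definable. Throughout I would use the two positional facts extracted from ``$D$ cuts $(a,b)$'': since some point of $(a,b)$ lies in the initial part $D$ we get $a\in D$, and since some point of $(a,b)$ lies above $D$ (and below $b$) we get $b\notin D$.

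First I would reduce definability to a statement about interval types. By Fact \ref{lema itp on cutting}(c), to prove that $D$ is $a$-definable it suffices to show that $D$ cuts no interval type from $IT(a)$. So, assuming $D$ does not cut $\Pi_0:=\itp(b/a)$, I must rule out the possibility that $D$ cuts some other $\Pi\in IT(a)$.

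The key step is to locate the locus of such a hypothetical $\Pi$. Writing $L:=\Pi_0(\Mon)$, note that $b\in L$ and $b\notin D$, together with the assumption that $D$ does not cut $L$, force $D<L$. Now suppose $D$ cuts some $\Pi\in IT(a)$; then $\Pi\neq\Pi_0$, so by disjointness of the loci of distinct interval types over $a$ we have $\Pi(\Mon)\cap L=\emptyset$. Since $D$ cuts $\Pi$ it has a realization inside $D$, hence strictly below $L$; as disjoint convex sets in a linear order are comparable, this gives $\Pi(\Mon)<L$, and in particular $\Pi(\Mon)<b$. At this point Lemma \ref{lema o ocuvanju itp}(a), applied with $B=\{b\}$, yields $\Pi\ior\Pi\,|\,ab$, so by Fact \ref{lema itp on cutting}(a) no $ab$-definable initial part cuts $\Pi$ --- contradicting that the $ab$-definable set $D$ cuts $\Pi$. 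Hence $D$ cuts no interval type over $a$ and is $a$-definable.

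The argument for $\itp(a/b)$ is the mirror image. Assuming $D$ does not cut $L':=\itp(a/b)(\Mon)$, the facts $a\in L'$ and $a\in D$ force $L'\subseteq D$; and if $D$ cut some $\Pi\in IT(b)$ distinct from $\itp(a/b)$, its locus would be disjoint from $L'$ and would meet the region above $D\supseteq L'$, so comparability gives $\Pi(\Mon)>L'$ and hence $a<\Pi(\Mon)$. Lemma \ref{lema o ocuvanju itp}(a) with $B=\{a\}$ then again gives $\Pi\ior\Pi\,|\,ab$ and the same contradiction, so $D$ is $b$-definable. I expect the only delicate point to be this pinning-down of $\Pi(\Mon)$ strictly to one side of $b$ (resp.\ $a$): it is exactly there that one must combine the disjointness of interval-type loci with the convexity of $\Pi(\Mon)$ and the single-point hypothesis of Lemma \ref{lema o ocuvanju itp}, and keeping track of which inequality ($D<L$ versus $L'\subseteq D$) one is entitled to is what makes the two halves genuinely parallel rather than literally dual.
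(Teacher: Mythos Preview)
Your proposal is correct and follows essentially the same approach as the paper: both arguments reduce definability over a single parameter to the non-cutting of interval types via Fact~\ref{lema itp on cutting}(c), and then use Lemma~\ref{lema o ocuvanju itp}(a) together with Fact~\ref{lema itp on cutting}(a) to rule out every interval type over $a$ (resp.\ $b$) other than $\itp(b/a)$ (resp.\ $\itp(a/b)$). The paper argues directly by case analysis on the position of $\Pi$ relative to $a$ and $\itp(b/a)$, whereas your contrapositive framing lets you first derive $D<L$ (resp.\ $L'\subseteq D$) and then handle all remaining $\Pi$ uniformly; this is a minor organizational difference, not a different idea.
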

\begin{proof}
Since $D$ is not $a$-definable, by Fact \ref{lema itp on cutting}(c), we have that $D$ must cut some interval type $IT(a)$. 
Obviously it cuts neither those consistent with $x\leq a$, nor those greater than $\itp(b/a)$. Also, if  $\Pi\in IT(a)$  is consistent with $a<x$ and  $\Pi(\Mon)<\itp(b/a)$, then by Lemma \ref{lema o ocuvanju itp} we have $\Pi\ior\,\Pi\,|\, ab$, so $D$ does not cut $\Pi$ by Fact \ref{lema itp on cutting}(a). The only remaining possibility is that $D$ cuts $\itp(b/a)$. Similarly, $D$ cuts $\itp(a/b)$.
\end{proof}

\begin{lem}\label{lema initial part A subset D} (LF) \  Suppose that  $D$ is an $A$-definable initial part.
\begin{enumerate}[(a)]
\item If  $A\subseteq D$ and $a=\max A$, then $D$ is $a$-definable.  
\item If  $D<A$ and $a=\min A$, then $D$ is $a$-definable. 
\end{enumerate}
\end{lem}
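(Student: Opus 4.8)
The plan is to prove (a) through the interval-type characterization of $a$-definability given by Fact \ref{lema itp on cutting}(c): it suffices to show that $D$ cuts no interval type from $IT(a)$, for then that Fact (applied with base set $\{a\}$) yields that $D$ is $a$-definable. Part (b) will follow by the dual (order-reversing) argument, so I focus on (a). Throughout I use that ``$D$ cuts $\Pi$'' depends only on the locus $\Pi(\Mon)$ together with $D$.

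First I would determine which interval types over $a$ are even eligible to be cut. Since $x<a$, $x=a$ and $x>a$ are convex $L(a)$-formulae, and $\Pi_0\in IT(a)$ either contains each such formula or is inconsistent with it, the locus $\Pi_0(\Mon)$ is contained in exactly one of $(-\infty,a)$, $\{a\}$, or $(a,\infty)$. In the first two cases $\Pi_0(\Mon)\subseteq(-\infty,a]\subseteq D$, because $a=\max A\in A\subseteq D$ and $D$ is an initial part; hence no realization of $\Pi_0$ lies outside $D$ and $D$ cannot cut $\Pi_0$. Thus the only interval types that $D$ could possibly cut satisfy $a<\Pi_0(\Mon)$.

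The key step is to lift such a $\Pi_0$ to an interval type over the full parameter set $A$ without shrinking its locus. Since $a=\max A$, we have $A\smallsetminus\{a\}<a<\Pi_0(\Mon)$, so $(A\smallsetminus\{a\})<\Pi_0(\Mon)$, and Lemma \ref{lema o ocuvanju itp}(a) gives $\Pi_0\ior\Pi_0\,|\,A$; write $\Pi:=\Pi_0\,|\,A\in IT(A)$. The extension $\ior$ preserves the locus: if $\itp(b/a)=\Pi_0$ then $\itp(b/A)$ is an interval type over $A$ extending $\Pi_0$, hence equals $\Pi$ by uniqueness, so $\Pi_0(\Mon)\subseteq\Pi(\Mon)$, while $\Pi(\Mon)\subseteq\Pi_0(\Mon)$ is automatic. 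Consequently $D$ cuts $\Pi$ if and only if $D$ cuts $\Pi_0$. But $D$ is $A$-definable, so by Fact \ref{lema itp on cutting}(c) it cuts no member of $IT(A)$, in particular not $\Pi$; therefore it does not cut $\Pi_0$ either. Combined with the previous paragraph, $D$ cuts no interval type over $a$, so $D$ is $a$-definable, which is (a).

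For (b) the configuration is the mirror image: $D<A$ forces $D<a=\min A$, the interval types over $a$ with $\Pi_0(\Mon)\geqslant a$ have loci entirely above $D$ and so cannot be cut, and the eligible ones satisfy $\Pi_0(\Mon)<a<(A\smallsetminus\{a\})$; the same invocation of Lemma \ref{lema o ocuvanju itp}(a), now in its ``$\Pi(\Mon)<B$'' form, lifts $\Pi_0$ to $IT(A)$ with unchanged locus, and the same contradiction with Fact \ref{lema itp on cutting}(c) finishes the argument. The one point demanding care — the \emph{main obstacle} — is exactly this locus-preservation under $\ior$: it is what makes ``cutting over $a$'' coincide with ``cutting over $A$'' on the relevant types, and hence what allows the $A$-definability of $D$ to be pushed down to the single parameter $a$.
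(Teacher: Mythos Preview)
Your proof is correct and follows essentially the same route as the paper's: case-split on the position of $\Pi_0(\Mon)$ relative to $a$, dispose of the trivial cases, and for $a<\Pi_0(\Mon)$ invoke Lemma \ref{lema o ocuvanju itp}(a) to pass to $IT(A)$. The locus-preservation argument you flag as the ``main obstacle'' is in fact a small detour: once $\Pi_0\ior\Pi_0\,|\,A$ is established, Fact \ref{lema itp on cutting}(a) already says directly that no $A$-definable initial part cuts $\Pi_0$, so you need not lift to $\Pi\in IT(A)$ and compare loci.
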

\begin{proof}
We will prove only part (a);  the other part can be proved in a similar way. 
Assume that $D$ is $A$-definable, $A\subseteq D$ and $a=\max D$. To prove that $D$ is $a$-definable, by Fact \ref{lema itp on cutting}(c), it suffices to show that $D$ cuts no interval type  $\Pi\in IT(a)$. If $a<\Pi(\Mon)$ holds, then Lemma \ref{lema o ocuvanju itp} implies that $D$ does not cut $\Pi$. If $\Pi(\Mon)\leqslant a$ holds, then $\Pi(\Mon)\subseteq D$ and hence $D$  does not cut $\Pi$. Therefore, $D$ is $a$-definable.  
\end{proof} 

\begin{lem}\label{lem def init one param} (LF) Every $A$-definable initial part  is $A'$-definable for some $A'\subseteq A$ having at most one element.  
\end{lem}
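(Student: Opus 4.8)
The plan is to induct on the size of a finite parameter set defining $D$. Since $D$ is $A$-definable it has a definition $\delta(x;a_1,\dots,a_n)$ over finitely many parameters $a_1<\dots<a_n$ from $A$, and I will show by induction on $n$ that a single parameter suffices (the case $n\le 1$ being trivial). For the inductive step the first thing to do is to locate the boundary of $D$: as $D$ is an initial part, the parameters lying in $D$ form an initial segment $a_1,\dots,a_k$ and the rest lie outside. If $k=n$ (so $A\subseteq D$) or $k=0$ (so $D<A$), then Lemma \ref{lema initial part A subset D} immediately shows $D$ is definable over $\max A$, respectively $\min A$, and we are done. The interesting case is $0<k<n$, where I set $a:=a_k\in D<a_{k+1}=:b$.

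Next I would cut the parameter set down to the two consecutive parameters $a,b$ straddling the boundary, i.e.\ show $D$ is $\{a,b\}$-definable. By Fact \ref{lema itp on cutting}(c) it suffices to check that $D$ cuts no $\Pi\in IT(a,b)$. If $\Pi(\Mon)\le a$ then $\Pi(\Mon)\subseteq D$ (since $a\in D$), and if $b\le\Pi(\Mon)$ then $\Pi(\Mon)$ is disjoint from $D$; neither is cut. If $a<\Pi(\Mon)<b$, the remaining parameters split as $\{a_1,\dots,a_{k-1}\}<\Pi(\Mon)<\{a_{k+2},\dots,a_n\}$, so Lemma \ref{lema o ocuvanju itp}(b) gives $\Pi\ior\Pi\,|\,A$; by Fact \ref{lema itp on cutting}(a) no $A$-definable initial part cuts $\Pi$, so $D$ does not. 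Hence $D$ is $\{a,b\}$-definable, and if $n\ge 3$ this is a proper subset of $A$, so the induction hypothesis finishes the argument.

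This reduces everything to the base case $n=2$: an $\{a,b\}$-definable initial part $D$ with $a\in D<b$ and boundary strictly inside $(a,b)$ must already be $a$- or $b$-definable. Here I would argue by contradiction. Using Lemma \ref{lema o ocuvanju itp}(a) together with Fact \ref{lema itp on cutting} one sees that the only interval type over $a$ that $D$ can cut is $\itp(b/a)$, and the only one over $b$ is $\itp(a/b)$ (any other has $a$, respectively $b$, strictly on one side, hence extends uniquely and is uncut). Thus $D$ is $a$-definable unless it cuts $\itp(b/a)$, and $b$-definable unless it cuts $\itp(a/b)$; assuming it is neither, Lemma \ref{lema pomocna za 4 5} forces $D$ to cut \emph{both} $\itp(b/a)$ and $\itp(a/b)$, at the common boundary $\beta=\sup D$.

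The hard part, and what I expect to be the main obstacle, is to convert this simultaneous cutting into a violation of (LF). The idea is to combine the two cuts with monotonicity (Lemma \ref{Lema_monotonicity}): for $y$ ranging over realizations of $\tp(b/a)$ the map $y\mapsto\sup\delta(\Mon;a,y)$ is nondecreasing, and since $(a,y)\models\tp(a,b)$ it satisfies $a<\sup\delta(\Mon;a,y)<y$, with the symmetric statement for the first coordinate over $\tp(a/b)$. Realizing these types by automorphisms, one builds a sequence of parameters together with witnessing elements whose $\phi$-types over a suitable initial part are pairwise distinct (for $\phi$ the formula defining the relevant translate of $D$), contradicting (LF) in the style of the proof of Lemma \ref{lema o ocuvanju itp}. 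The delicate point I anticipate is arranging the construction so that the moving parameters and the witnesses land in \emph{separated} parts of the order — parameters below a locus, witnesses inside it — since a one-parameter sweep alone makes the boundaries accumulate and all witnesses collapse to a single type; this is precisely where both cuts, rather than just one, must be used.
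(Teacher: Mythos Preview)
Your outline is correct and matches the paper's proof almost step for step: the reduction to two parameters $a,b$ via Fact~\ref{lema itp on cutting}(c) and Lemma~\ref{lema o ocuvanju itp}(b), the invocation of Lemma~\ref{lema pomocna za 4 5} to force cuts through both $\itp(b/a)$ and $\itp(a/b)$, and the recognition that a single-parameter sweep fails are all exactly what the paper does. For the final step you correctly anticipate, the paper's explicit construction alternates the two cuts to build an increasing sequence $(a_n)$ and a decreasing sequence $(b_n)$ with $a_n\in D(a_n,b_n)<a_{n+1}<b_{n+1}\in D(a_{n+1},b_n)<b_n$, choosing each $a_{n+1}$ above $D(a_n,b_n)$ inside $\tp(a_n/b_{\le n})$ and each $b_{n+1}$ inside $D(a_{n+1},b_n)$ realizing $\tp(b_n/a_{\le n+1})$; then monotonicity gives $D(a_n,b_m)\subseteq D(a_n,b_n)$ for $m>n$, and the pairs $(b_n,b_{n+1})$ realize pairwise distinct $\phi$-types over the initial part $I=\bigcup_n(-\infty,a_n)$ for $\phi(x;y,z):=z\in D(x,y)$, contradicting (LF).
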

\begin{proof} Suppose, on the contrary, that $D$  is an initial part which is not definable by a formula with at most one parameter from $A$. Without loss of generality  we can assume that $A$ is finite and non-empty. By Lemma \ref{lema initial part A subset D} we have $A\cap D\neq\emptyset$ and $A\not\subseteq D$, so   $a_0=\max(A\cap D)$ and $b_0=\min(A\smallsetminus D)$ are well-defined. Then $a_0<\sup D<b_0$ and $D$ cuts the interval $(a_0,b_0)$.  
We {\it claim} that $D$ is   $a_0b_0$-definable.   
By Fact \ref{lema itp on cutting}(a) it suffices to show that $D$ cuts no interval type from $IT(a_0b_0)$. 
 Obviously, $D$ cuts neither an interval type  satisfying $\Pi(\Mon)\leq a_0$, nor  one satisfying  $b_0\leq \Pi(\Mon)$, so we are left only with the ones  satisfying $a_0<\Pi(\Mon)<b_0$. By Lemma \ref{lema o ocuvanju itp}(b) each of them satisfies $\Pi\ior\,\Pi\,|\, A$, so  cannot be cut by $D$. This proves the claim. 
 
 \smallskip
We will say that an initial part is $ab$-good if it is $ab$-definable, cuts the interval $(a,b)$ and is neither $a$-definable nor $b$-definable. Note that  by Lemma \ref{lema pomocna za 4 5} an $ab$-good initial part cuts both $\itp(a/b)$ and $\itp(b/a)$.
By now we have proved that $D=D(a_0,b_0)$ is $a_0b_0$-good. Recursively define an increasing sequence  $(a_n)_{n\in\omega}$ and a decreasing sequence $(b_n)_{n\in\omega}$ satisfying the following properties  for all $n\in\omega$:

(1)$_n$  \  $\tp(a_n/b_{\leq n})= \tp(a_{n+1}/b_{\leq n})$ and $\tp(b_n/a_{\leq n+1})= \tp(b_{n+1}/a_{\leq n+1})$;

 (2)$_n$ \ $D(a_{n+1},b_n)$ is $a_{n+1}b_n$-good and $D(a_{n+1},b_{n+1})$ is $a_{n+1}b_{n+1}$-good;

(3)$_n$ \  $a_n\in D(a_n,b_n)<a_{n+1}<b_{n+1}\in D(a_{n+1},b_n)<b_n$.

Assume that we have $a_i$ and $b_i$ defined for $i\leqslant n$ so that $D(a_n,b_n)$ is $a_nb_n$-good and  (1)$_j$--(3)$_j$ are satisfied for all $j<n$. First, we define $a_{n+1}$. Since $D(a_n,b_n)$ is $a_nb_n$-good  it cuts $\itp(a_n/b_n)$. Then $b_n<b_{<n}$ and Lemma \ref{lema o ocuvanju itp} imply  $\itp(a_n/b_n)\vdash^i \itp(a_n/b_{\leq n})$, so $D(a_n,b_n)$ cuts $\itp(a_n/b_{\leq n})$ and there is a realization $a_{n+1}$ of $\tp(a_n/b_{\leq n})$ such that $D(a_n,b_n)<a_{n+1}$.  From  $\tp(a_n/b_n)=\tp(a_{n+1}/b_n)$  we conclude that $D(a_{n+1},b_n)$ is $a_{n+1}b_n$-good and hence cuts $\tp(b_n/a_{n+1})$. Then Lemma \ref{lema o ocuvanju itp} implies  $\itp(b_n/a_{n+1})\vdash^i\itp(b_n/ a_{\leq n+1})$, so $D(a_{n+1},b_n)$ cuts $\tp(b_n/a_{\leq n+1})$  and  
there is a realization $b_{n+1}$ of $\tp(b_n/a_{\leq n+1})$ such that $b_{n+1}\in D(a_{n+1},b_n)$.  Then $\tp(b_n/a_{n+1})=\tp(b_{n+1}/a_{n+1})$ implies  that $D(a_{n+1},b_{n+1})$ is  $a_{n+1}b_{n+1}$-good; in particular, $a_{n+1}<b_{n+1}$. Hence $D(a_{n+1},b_{n+1})$ is  $a_{n+1}b_{n+1}$-good and (1)$_n$--\,(3)$_n$ are satisfied; the recursion is well defined.  

As a corollary of our construction  we have:

(4) \  $D(a_n,b_m)$ is an initial part and $D(a_n,b_m)\subseteq D(a_n,b_n)$ for all $n< m$.

\noindent This follows by Lemma \ref{Lema_monotonicity} since  (1)$_{\geqslant n}$ implies $\tp(b_n/a_n)=\tp(b_m/a_n)$ and  (3)$_{\geqslant n}$ implies  $b_m<b_n$.  
Let  $I=\bigcup_{n\in\omega}(-\infty,a_n)$. Clearly, $I$ contains all the $a_n$'s and none of the $b_n$'s. Consider the formula $\phi(x;y,z)$ given by $z\in D(x,y)$. We claim that pairs $(b_n,b_{n+1})$ realize pairwise distinct $\phi$-types over $I$. It is enough to note that $a_{n+1}$ witnesses that the $\phi$-type of $(b_n,b_{n+1})$ is different from any $\phi$-type  of   $(b_m,b_{m+1})$ for all $m>n$. Indeed, $b_{n+1}\in D(a_{n+1},b_n)$ by (3), but $D(a_{n+1},b_m)\subseteq D(a_{n+1},b_{n+1})$ by (4), so $b_{m+1}\notin D(a_{n+1},b_m)$ because $D(a_{n+1},b_{n+1})\subseteq I$ by (3) and $b_{m+1}\notin I$. This contradiction   finishes the proof.
\end{proof}

  We will now show that the formula defining an initial part can be chosen in a specific form. 
A formula $\phi(x,y)$ is called {\em monotone} if it defines a monotone relation  on $(\Mon,<)$, i.e.\ if $(\phi(\Mon,a)\mid a\in\Mon)$ is an $\subseteq$-increasing sequence of initial parts of $\Mon$. Examples of monotone formulae are $x<f(y)$ and $x\leqslant f(y)$, where $f$ is a unary, definable and increasing function. By a {\em monotone definition} of an $a$-definable initial part $D\subseteq \Mon$ we will mean a monotone formula $\phi(x,y)$  satisfying $\phi(\Mon,a)=D$.

\begin{lem}\label{Lema_nonotone definiton}
(LF) Every $a$-definable initial  part   has a monotone  definition.
\end{lem}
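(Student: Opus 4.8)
The plan is to show that any $a$-definable initial part $D$ can be redefined by a monotone formula, i.e.\ one whose fibers $\phi(\Mon,y)$ form an $\subseteq$-increasing chain of initial parts as $y$ increases. The natural candidate is built directly from the given defining formula: if $\psi(x,a)$ defines $D$, I would consider the formula
\[
\phi(x,y):=\exists z\,\bigl(\tp(z)=\tp(y)\ \land\ z\leqslant y\ \land\ \psi(x,z)\bigr),
\]
suitably expressed by an honest $L$-formula rather than by the type (the type-condition is harmless since by Lemma~\ref{Lema_monotonicity} only finitely much information about $\tp(y)$ is relevant, and one may absorb $\tp(y)$ into a single formula over the empty set using $\omega$-saturation and compactness). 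By construction $\phi(\Mon,y)=\bigcup\{\psi(\Mon,z)\mid z\leqslant y,\ \tp(z)=\tp(y)\}$ is a union of initial parts, hence an initial part, and the family is visibly $\subseteq$-increasing in $y$ \emph{among elements realizing the same type as $a$}. The key point that makes this work is Lemma~\ref{Lema_monotonicity}: for $z\leqslant z'$ with $\tp(z)=\tp(z')$ we get $\sup\psi(\Mon,z)\leqslant\sup\psi(\Mon,z')$, so along each type the fibers already increase, which is exactly what lets the ``$\exists z$ below $y$'' definition recover $D$ at $y=a$ without swallowing anything larger.

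The first thing I would verify is that $\phi(\Mon,a)=D$ on the nose. The inclusion $D=\psi(\Mon,a)\subseteq\phi(\Mon,a)$ is immediate by taking $z=a$. For the reverse inclusion I would use Lemma~\ref{Lema_monotonicity} (monotonicity along a fixed type): any $z\leqslant a$ with $\tp(z)=\tp(a)$ satisfies $\sup\psi(\Mon,z)\leqslant\sup\psi(\Mon,a)=\sup D$, and since $\psi(\Mon,z)$ and $D$ are both initial parts this gives $\psi(\Mon,z)\subseteq D$; hence $\phi(\Mon,a)\subseteq D$. So $\phi$ defines $D$ when the second variable is plugged with $a$.

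The remaining and more delicate point is genuine monotonicity of $\phi$ as $y$ ranges over \emph{all} of $\Mon$, not just over realizations of $\tp(a)$: the definition of a monotone formula requires $(\phi(\Mon,y)\mid y\in\Mon)$ to be an increasing chain of initial parts for every $y$. This is where I expect the main obstacle to lie, and it is presumably why the conjunct $\tp(z)=\tp(y)$ appears. With that conjunct, for $y<y'$ of \emph{different} types there is no reason a priori for $\phi(\Mon,y)\subseteq\phi(\Mon,y')$, so the formula as written is monotone only along each complete $1$-type. I would handle this by stratifying over the finitely many $1$-types that matter and taking, for the global definition, the formula
\[
\phi'(x,y):=\exists z\,\bigl(z\leqslant y\ \land\ \psi(x,z)\bigr),
\]
whose fibers are manifestly $\subseteq$-increasing in $y$ (larger $y$ allows more witnesses $z$), hence plainly monotone, and then argue that $\phi'(\Mon,a)=D$ as well. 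For the latter I would again invoke Lemma~\ref{Lema_monotonicity} together with Lemma~\ref{lem def init one param}: since $D$ is $a$-definable and initial, and $(\Mon,<)$ is linearly finite, the supremum of $D$ is controlled by the single parameter $a$, so any witness $z\leqslant a$ of type possibly different from $\tp(a)$ cannot push $\sup\psi(\Mon,z)$ past $\sup D$ without producing infinitely many distinct $\phi$-types over an initial part, contradicting (LF). Making this last comparison airtight — ruling out that some $z\leqslant a$ of a different type has $\sup\psi(\Mon,z)>\sup D$ — is the crux, and I would phrase it as: if it happened for one such $z$, then iterating via an automorphism fixing $(-\infty,z)$ would yield an (LF)-violating sequence of fibers, exactly in the style of the proofs of Lemmas~\ref{Lema_monotonicity} and~\ref{lema o ocuvanju itp}.
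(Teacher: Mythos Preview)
Your overall shape is right --- define the monotone formula by existentially quantifying over witnesses $z\leqslant y$ --- but you took a wrong turn that creates the very ``delicate point'' you then struggle with. The constraint on $z$ should be a \emph{fixed} formula $\theta(z)\in p=\tp(a)$, not the relation $\tp(z)=\tp(y)$. The paper does exactly this: by Lemma~\ref{Lema_monotonicity} and compactness one finds $\theta\in p$ with $\theta(v)\wedge\theta(v')\wedge v<v'\vdash\forall x\,(\psi(x,v)\Rightarrow\psi(x,v'))$, and then
\[
\chi(x,y):=\exists v\,(\theta(v)\wedge v\leqslant y\wedge\psi(x,v))
\]
is \emph{trivially} monotone (larger $y$ admits more witnesses $v$, and $\theta$ does not mention $y$), while $\chi(\Mon,a)=D$ by the same computation you did. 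There is nothing delicate here once $\theta$ is fixed.

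Your fallback $\phi'(x,y):=\exists z\,(z\leqslant y\wedge\psi(x,z))$ is genuinely wrong, and the (LF)-style argument you sketch cannot save it. Counterexample: take a colored order $(\mathbb Q,<,P)$ with $P$ dense and codense (this satisfies (RB), hence (LF)), and let $\psi(x,y)$ say ``$x\leqslant y$ if $P(y)$, else $x=x$''. For $a\in P$ the set $D=\psi(\Mon,a)=(-\infty,a]$, but any $z<a$ with $\lnot P(z)$ gives $\psi(\Mon,z)=\Mon$, so $\phi'(\Mon,a)=\Mon\neq D$. No contradiction with (LF) arises, so the ``iterate via an automorphism'' idea has no purchase. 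The error is not in the analysis but in dropping the type constraint on $z$ altogether; keeping a fixed $\theta\in\tp(a)$ is both necessary and sufficient.
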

\begin{proof}Suppose that   $\phi(x,a)$ defines an initial part and let $p=\tp(a)$. By Lemma \ref{Lema_monotonicity} we have:
$$p(y)\cup p(z)\cup\{y<z\}\vdash \forall  x(\phi(x,y)\rightarrow\phi(x,z)).$$ 
By compactness, there is a formula $\theta(y)\in p(y)$ satisfying: 
$$\theta(y)\wedge\theta(z)\wedge y<z\forces \forall x(\phi(x,y)\rightarrow\phi(x,z)).$$
Then the formula \   $\exists v (\theta (v)\land v\leqslant y\land \phi(x,v))$ is a  monotone definition of   $\phi(\Mon,a)$.  
\end{proof}

\begin{dfn}\label{Definition S^n_E} 
Let $E$ be a definable, convex equivalence relation   and   $N\in \mathbb Z$. 
\begin{enumerate}
\item    For $N>0$ fix a formula $S^N_E(x,y)$   expressing that the $N$-th consecutive $E$-class succeeding  the class $[x]_E$ exists and contains $y$;
\item   $S^0_E(x,y)$ is $E(x,y)$ and for $N<0$  $S^N_E(x,y):=S^{-N}_E(y,x)$; 
\item   $x< S^N_E(y):= \exists z\, S^N_E(y,z)\land \forall z\,( S^N_E(y,z)\rightarrow x< z)$;
\item $x\leq S^N_E(y):= x<S^N_E(y)\vee S^N_E(y,x)$;    
\item  $S^N_E(y)<x$ and $S^N_E(y)\leqslant x$ are defined  similarly;
\item If $E_1,E_2$ are definable, convex equivalence relations and $N_1,N_2$ integers, then we define:  $S^{N_1}_{E_1}(y)<x<S^{N_2}_{E_2}(z):=S^{N_1}_{E_1}(y)<x \land x<S^{N_2}_{E_2}(z)$; analogously, similar formulae are defined.
\end{enumerate}
\end{dfn}

For $a\in \Mon$ the fiber $S^N_E(a,\Mon)$ will be denoted by $S^N_E(a)$.   $S^N_E(a)$ is   the $N$-th consecutive $E$-class succeeding/preceding the class   $[a]_E$, if such a successor exists; otherwise $S^N_E(a)=\emptyset$. 
 Hence,  if  $S^N_E(a)=\emptyset$, then each of $x<S^N_E(a)$, $x\leqslant S^N_E(a)$, $S^N_E(a)<x$, and $S^N_E(a)\leqslant x$ is inconsistent. 

\begin{rmk}
(1)  The formulae  $x< S^{N+1}_E(a)$ and $x\leqslant S^N_E(a)$ are equivalent if and only if $S^N_E(a)=\emptyset$ or $S^{N+1}_E(a)\neq\emptyset$. 

(2) Similarly,  $x\leqslant S^N_E(a)$ and $\lnot(S^N_E(a)<x)$ are equivalent if and only if $S^{N}_E(a)\neq\emptyset$ . 

(3) The formula  $x< S^{N}_E(y)$   defines an initial part   of a model for each fixed parameter value for $y$, but it may not be monotone. The reason for that lies exclusively in  the non-existence of  $N$-th successors. Namely, there may exist elements $a<b$ such that the $N$-th $E$-class succeeding $[a]_E$ exists, but the $N$-th $E$-class succeeding $[b]_E$ does not; then $x<S^N_E(a)$ defines a proper initial part, while $x<S^N_E(b)$ is inconsistent.
\end{rmk}

\begin{lem}\label{Lema_monotone fibers}(LF)
Suppose that $D(a)$ is an $a$-definable initial part of $\Mon$.  
\begin{enumerate}[(a)]
\item  If $a\in D(a)$, then $D(a)$ is  defined by a formula of the form  $x\leqslant S^{N}_{E}(a)$ for some $N\geqslant 0$ and   definable convex equivalence $E$.
\item If $a\notin D(a)$, then $D(a)$ is  defined by a formula of the form  $x< S^{N}_{E}(a)$ for some $N\leqslant 0$ and   definable convex equivalence $E$.
\end{enumerate}
\end{lem}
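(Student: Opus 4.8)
The plan is to extract a single definable convex equivalence relation from a monotone definition of $D(a)$, observe that $D(a)$ is a union of its classes, and then use (LF) to show that the boundary of $D(a)$ lies only finitely many classes away from the class of $a$.

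First I would apply Lemma \ref{Lema_nonotone definiton} to fix a monotone formula $\phi(x,y)$ with $\phi(\Mon,a)=D(a)$, and write $D(b):=\phi(\Mon,b)$ for every $b\in\Mon$, so each $D(b)$ is an initial part and $b\leq b'$ implies $D(b)\subseteq D(b')$. Dually, for fixed $b$ the set $F(b):=\{y\mid\phi(b,y)\}=\{y\mid b\in D(y)\}$ is a final part, and since every $D(y)$ is initial we get $b<b'\Rightarrow F(b')\subseteq F(b)$. I then define
$$b\,E\,b'\ :\Longleftrightarrow\ \forall y\,(\phi(b,y)\leftrightarrow\phi(b',y)),$$
i.e.\ $F(b)=F(b')$. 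This is definable, and because $F$ is $\subseteq$-decreasing it is convex: if $b<z<b'$ and $F(b)=F(b')$, then $F(b')\subseteq F(z)\subseteq F(b)$ forces $F(z)=F(b)$. The reason for using $F$ rather than $b\mapsto D(b)$ is that $D(a)$ becomes a union of $E$-classes: if $b\in D(a)$, i.e.\ $a\in F(b)$, and $b\,E\,b'$, then $a\in F(b')$, so $b'\in D(a)$. Thus $D(a)$ is an $E$-closed initial part, and its boundary falls exactly at an $E$-class boundary.

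For case (a) ($a\in D(a)$), $E$-closedness gives $[a]_E\subseteq D(a)$, so $D(a)$ is $[a]_E$ together with some of the $E$-classes above it; once I know that only finitely many, say $N\geq0$, such classes lie in $D(a)$, the top class is $S^N_E(a)$ and $D(a)=\{x\mid x\leq S^N_E(a)\}$ in the sense of Definition \ref{Definition S^n_E}. This finiteness is the main obstacle, and I would settle it with (LF). Suppose there were infinitely many classes $[a]_E<C_1<C_2<\cdots$ inside $D(a)$; pick $c_i\in C_i$, so the $c_i$ have pairwise distinct $F$-cuts with $F(c_1)\supsetneq F(c_2)\supsetneq\cdots$. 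Choose $d_i\in F(c_i)\smallsetminus F(c_{i+1})$, so $c_i\in D(d_i)$ but $c_{i+1}\notin D(d_i)$, whence $\sup D(d_i)\in[c_i,c_{i+1})$. Since $c_{i+1}\in D(a)$ but $c_{i+1}\notin D(d_i)$, monotonicity (Lemma \ref{Lema_monotonicity}) forces $d_i<a$; and $c_i>a$ because $C_i>[a]_E$. Hence all parameters $d_i$ lie in the initial part $I=(-\infty,a)$ while all test elements $c_i$ lie in $\Mon\smallsetminus I$. For $\psi(x;y):=\phi(x,y)$ one computes $\{j\mid c_i\in D(d_j)\}=\{j\mid j\geq i\}$, so the $c_i$ realize pairwise distinct $\psi$-types over $I$, contradicting (LF).

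Case (b) ($a\notin D(a)$) follows by the mirror-image argument. Here $E$-closedness gives $[a]_E\subseteq\Mon\smallsetminus D(a)$, so $D(a)$ lies strictly below $[a]_E$ and equals $\{x\mid x<C\}$ for the bottom class $C$ of the $E$-closed final part $\Mon\smallsetminus D(a)$. Assuming infinitely many classes $\cdots<C_{-2}<C_{-1}<[a]_E$ outside $D(a)$ with representatives $c_{-i}$, I would pick parameters $e_{-i}\in F(c_{-i-1})\smallsetminus F(c_{-i})$, which now satisfy $e_{-i}>a$ and $c_{-i}<a$; the same type-counting contradicts the final-part form of (LF), which is equivalent to (LF). With $M\geq0$ the resulting number of classes, $C=S^{-M}_E(a)$ and $D(a)=\{x\mid x<S^{N}_E(a)\}$ with $N=-M\leq0$, as required.
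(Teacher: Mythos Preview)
Your argument is correct and follows essentially the same route as the paper: choose a monotone definition $\phi$, define $E$ by equality of the fibers $y\mapsto\phi(b,y)$ (the paper writes this as $\phi(u,\Mon)=\phi(v,\Mon)$), observe that $D(a)$ is $E$-closed, and use (LF) to bound the number of $E$-classes between $[a]_E$ and the boundary of $D(a)$. Your finiteness argument via the witnesses $d_i\in F(c_i)\smallsetminus F(c_{i+1})$ is exactly the dual of the paper's observation that the initial parts $\lnot\phi(b_n,\Mon)$ strictly increase with supremum below $a$; you are just a bit more explicit about the type-counting. One small slip: when you write ``monotonicity (Lemma~\ref{Lema_monotonicity}) forces $d_i<a$'', you are using the monotonicity of $\phi$ itself (already secured by Lemma~\ref{Lema_nonotone definiton}), not Lemma~\ref{Lema_monotonicity}, which carries a same-type hypothesis you have not verified. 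The step is still valid; just cite the right fact.
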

\begin{proof} By Lemma \ref{Lema_nonotone definiton} there exists a monotone formula $\phi(x,y)$   such that $D(a)=\phi(\Mon,a)$. Monotonicity of  $\phi(x,y)$ implies that each of  $(\phi(\Mon,y)\mid y\in \Mon)$  and $(\lnot\phi(x,\Mon)\mid x\in \Mon)$ is an increasing sequence  of initial parts of $(\Mon,<)$. Let $E$ be the equivalence relation defined by $\phi(u,\Mon)=\phi(v,\Mon)$. Clearly, it is definable  and, by monotonicity, it is convex. 

\smallskip
(a)\  Suppose that $a\in  D(a)$.  We {\em claim} that  $D(a)$ meets only finitely many $E$-classes on  $[a,\infty)$. Otherwise, by compactness and saturation of $\Mon$, we can find an infinite increasing sequence $[a]_{E}<[b_0]_{E}<[b_1]_{E}<\dots $ such that each $[b_n]_{E}$ is contained in $D(a)$.  
By monotonicity and the definition of $E$ we have an increasing sequence of initial parts $ \lnot\phi(a,\Mon)\subset\lnot\phi(b_0,\Mon)\subset\lnot\phi(b_1,\Mon)\subset\dots $.  For each $n\in\omega$ we have $b_n\in D(a)$, so $\phi(b_n,a)$ holds, i.e.\ $a\notin \lnot\phi(b_n,\Mon)$; since   $\lnot \phi(b_n,\Mon)$ is an initial part we deduce $\lnot\phi(b_n,\Mon)<a$. So:  
$$\sup\lnot\phi(b_0,\Mon)<\sup\lnot\phi(b_1,\Mon)<\sup\lnot\phi(b_2,\Mon)<\dots <a<b_0<b_1<\dots $$
Arguing  as in  the proof of Lemma \ref{Lema_monotonicity} we deduce that this situation is impossible in theories satisfying (LF). This completes the proof of the claim. 
 
Let $[a_0]_E<[a_1]_E<\dots <[a_N]_E$,  where $a_i\in D(a)\cap[a,+\infty)$,  be the sequence of all  $E$-classes meeting $D(a)\cap[a,+\infty)$. These classes are consecutive   because $D(a)$ is a convex set. We will finish  the proof  by showing that $[a_N]_E\subseteq D(a)$ holds.   So suppose that $b\in[a_N]_E$ and $a_N<b$. Then
$a_N\in D(a)= \phi(\Mon,a)$ implies  $\models\phi(a_N,a)$. Combining with  $\phi(a_N,\Mon)=\phi(b,\Mon)$ we get $b\in\phi(\Mon,a)=D(a)$. 

\smallskip 
(b) If $D(a)=\emptyset$, then it is defined by $x<S^0_E(a)$ where $E$ is the full relation. Assuming $D(a)\neq \emptyset$,  the rest of the proof is quite similar to that of part (a). Using the same equivalence relation $E$, one proves that the complement   $\Mon\smallsetminus D(a)$ meets only finitely many $E$-classes below $a$,  with the least among them, say $N$-th below the class of $a$, being completely contained in the complement; then $D(a)$ is  defined by $x<S^{-N}_E(a)$. 
\end{proof} 

By now we have dealt only with initial parts. To describe the final parts, it suffices to note that reversing the order of a linearly ordered structure does not affect its linear finiteness. Then we deduce that any $a$-definable final part $D(a)$ has a definition of the form   $S^{-N}_E(a)\leqslant x$ or $S^N_E(a)< x$. 

\begin{thm}\label{Thm LF descr convex sets}
(LF) Every parametrically definable convex set $C$ has a definition of the form
\begin{center}
$S^{-m}_{E_1}(a)\leqslant x<S^{-n}_{E_2}(b)$,  $S^{-m}_{E_1}(a)\leqslant x\leqslant S^{n}_{E_2}(b)$,  $S^{m}_{E_1}(a)< x\leqslant S^{n}_{E_2}(b)$, or $S^{m}_{E_1}(a)< x < S^{-n}_{E_2}(b)$
\end{center}
for some  non-negative integers $m,n$ and    definable convex equivalence relations  $E_1,E_2$; if $C$ is definable with parameters from $A\neq\emptyset$, then   $a,b$ can be chosen from $A$. In particular,  $C$ is a Boolean combination of intervals and classes of definable  convex equivalence relations.
\end{thm}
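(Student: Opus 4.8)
The plan is to reduce the convex case to the already-settled case of initial and final parts by writing $C$ as the intersection of its two ``closures''. Suppose $C$ is $A$-definable. I would set $I=\{x\mid\exists y\,(y\in C\land x\leq y)\}$, the smallest initial part containing $C$, and $F=\{x\mid\exists y\,(y\in C\land y\leq x)\}$, the smallest final part containing $C$; both are $A$-definable by these explicit formulas. Since $C$ is convex, any element lying between two members of $C$ already belongs to $C$, which gives $C=I\cap F$. Thus it suffices to describe $I$ and $F$ separately in the required one-parameter successor form and then conjoin the two endpoint conditions.

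For the initial part $I$, Lemma \ref{lem def init one param} makes it $A'$-definable with $A'\subseteq A$ and $|A'|\leq 1$; if $A'=\{b\}$ I take that $b$, and if $A'=\emptyset$ I adjoin any $b\in A$ as a dummy parameter (this is the only place $A\neq\emptyset$ is needed). Lemma \ref{Lema_monotone fibers} then yields a definition $x\leq S^{n}_{E_2}(b)$ with $n\geq 0$ (if $b\in I$) or $x<S^{-n}_{E_2}(b)$ with $n\geq 0$ (if $b\notin I$), for some definable convex equivalence relation $E_2$. Dually, using that order reversal preserves (LF) together with the final-part versions of Lemmas \ref{lem def init one param} and \ref{Lema_monotone fibers} recorded after the latter, the final part $F$ is $\{a\}$-definable for some $a\in A$ and has a definition $S^{-m}_{E_1}(a)\leq x$ (closed, $m\geq 0$) or $S^{m}_{E_1}(a)<x$ (open, $m\geq 0$). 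Conjoining a lower-bound and an upper-bound condition produces exactly the four displayed forms, one for each choice of ``open/closed'' at the two ends. The degenerate cases are absorbed by letting $E$ be the full one-class relation with exponent $0$: then $x\leq S^{0}_E(b)$ defines all of $\Mon$ and $x<S^{0}_E(b)$ defines $\emptyset$, so an unbounded, half-bounded, or empty $C$ still matches one of the templates.

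Because the two reduction lemmas are already in hand, I expect no deep obstacle; the delicate points are the parameter bookkeeping in the $A'=\emptyset$ case and the final ``in particular'' assertion. For the latter I would unwind each successor bound into a Boolean combination of a ray and finitely many $E$-classes, e.g.\ $\{x\mid x\leq S^{n}_{E_2}(b)\}=(-\infty,b]\cup[b]_{E_2}\cup S^1_{E_2}(b)\cup\cdots\cup S^{n}_{E_2}(b)$, with the open and negative bounds handled symmetrically by writing the set as $(-\infty,b)$ with finitely many $E_2$-classes deleted. Since each nonempty $S^k_{E_2}(b)$ is a single class of $E_2$, it follows that $C=I\cap F$ is a Boolean combination of intervals and classes of definable convex equivalence relations, completing the argument.
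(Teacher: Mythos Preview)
Your proposal is correct and follows essentially the same route as the paper: write $C=I\cap F$ with $I$ initial and $F$ final, invoke Lemma~\ref{lem def init one param} to drop to a single parameter, and then Lemma~\ref{Lema_monotone fibers} (and its dual) to obtain the successor form; the ``in particular'' clause is handled identically by unwinding each bound into a ray together with finitely many $E$-classes. Your treatment of the $A'=\emptyset$ case via a dummy parameter and of the degenerate cases via the full relation is a little more explicit than the paper's, but the argument is the same.
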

\begin{proof} Suppose that $C\subseteq \Mon$ is a convex, $A$-definable set.  Then there is an $A$-definable initial part $I$ and an $A$-definable final part $F$ such that $C=I\cap F$.
By Lemma \ref{lem def init one param} $I$ is definable over a single parameter $b$ which may be chosen from $A$ provided that $A\neq\emptyset$. By Lemma \ref{Lema_monotone fibers} $I$ is defined by a formula of the form $x<S^{-n}_{E_2}(b)$ or $x\leqslant S^{n}_{E_2}(b)$.  Similarly, $F$ is $a$-definable by a formula of the form $S^{-m}_{E_1}(a)\leqslant x$ or $S^{m}_{E_1}(a)< x$. The first conclusion follows. 

For the second conclusion,  it suffices to inspect cases of  Lemma \ref{Lema_monotone fibers}.  If $b\in I$, then $I$ is the union of the interval $(-\infty,b)$ and the $n+1$ consecutive $E_2$-classes starting with $[b]_{E_2}$; if $b\notin I$, then $I$ is the difference of the interval $(-\infty,b)$ and the $m$ consecutive $E_2$-classes ending with $[b]_{E_2}$. Hence $I$ and $F$ are Boolean combinations of intervals and classes of convex equivalence relations; so is $C=I \cap F$. 
\end{proof}

The following fact, which will be used later, is an immediate corollary of the theorem.

\begin{cor}\label{Cor LFconvex a definable sets}(LF) Every $a$-definable convex set has a definition of the form
\begin{center}
$S^{-m}_{E_1}(a)\leqslant x<S^{-n}_{E_2}(a)$, \ $S^{-m}_{E_1}(a)\leqslant x\leqslant S^{n}_{E_2}(a)$  \ or \  $S^{m}_{E_1}(a)< x\leqslant S^{n}_{E_2}(a)$,
\end{center}
for some definable, convex equivalences $E_1$ and $E_2$ and non-negative integers $m,n$. In particular, every  convex $a$-definable  set $C$ not containing $a$ can be represented in the form $C_1\smallsetminus C_2$, where each $C_i$ is a union of finitely many consecutive $E_i$-classes.
\end{cor}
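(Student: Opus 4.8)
The plan is to read this off directly from Theorem~\ref{Thm LF descr convex sets}, applied with the one-element parameter set $A=\{a\}$. That theorem already puts an $a$-definable convex set into one of its four displayed forms with both parameters chosen from $A$, hence both equal to $a$. So the only genuine work for the first assertion is to explain why the fourth form $S^{m}_{E_1}(a)< x < S^{-n}_{E_2}(a)$ cannot occur, and to identify which of the remaining three applies; the ``in particular'' clause is then a short bookkeeping argument. I would first dispose of the trivial case $C=\emptyset$ and assume $C\neq\emptyset$.

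The cleanest mechanism is to use a \emph{canonical} decomposition rather than an arbitrary one. I would take $I=\{x\mid \exists y\,(y\in C\wedge x\leq y)\}$, the downward closure of $C$, and $F=\{x\mid\exists y\,(y\in C\wedge y\leq x)\}$, its upward closure. Both are $a$-definable, $I$ is an initial part, $F$ is a final part, and $C=I\cap F$ by convexity. Applying Lemma~\ref{Lema_monotone fibers} to $I$ (and its order-reversed version to $F$), the case split there is governed \emph{exactly} by whether $a$ lies in the part: if $a\in I$ then $I$ is given by $x\leq S^{n}_{E_2}(a)$ and if $a\notin I$ by $x< S^{-n}_{E_2}(a)$, while dually $a\in F$ gives $S^{-m}_{E_1}(a)\leq x$ and $a\notin F$ gives $S^{m}_{E_1}(a)< x$. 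With the canonical $I,F$ the combination $a\notin I$ and $a\notin F$ is contradictory, since $a\notin I$ forces $C<a$ and $a\notin F$ forces $a<C$; this is the step that eliminates the fourth form. The three surviving combinations are $a\in C$ (so $a\in I$ and $a\in F$, yielding $S^{-m}_{E_1}(a)\leq x\leq S^{n}_{E_2}(a)$), $a<C$ (so $a\in I$, $a\notin F$, yielding $S^{m}_{E_1}(a)< x\leq S^{n}_{E_2}(a)$), and $C<a$ (so $a\notin I$, $a\in F$, yielding $S^{-m}_{E_1}(a)\leq x< S^{-n}_{E_2}(a)$), which are precisely the three displayed forms.

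For the ``in particular'' statement I assume $a\notin C$, so by the above $C$ is in the first or third form; since reversing the order preserves (LF), it suffices to treat $a<C$, where $C=\{x\mid S^{m}_{E_1}(a)<x\leq S^{n}_{E_2}(a)\}$. The essential finiteness is built into the successor notation: the existence of $S^{n}_{E_2}(a)$ means only the $n+1$ consecutive classes $[a]_{E_2},\dots,S^{n}_{E_2}(a)$ lie at or below the upper bound, and only the $m+1$ classes $[a]_{E_1},\dots,S^{m}_{E_1}(a)$ reach up to the lower bound, so finitely many classes of each kind are involved. I would then set $C_1$ to be the union of the consecutive $E_2$-classes from the lowest one meeting $C$ up to $S^{n}_{E_2}(a)$, and $C_2$ the union of the consecutive $E_1$-classes at or below $S^{m}_{E_1}(a)$ that meet $C_1$, and verify $C=C_1\smallsetminus C_2$.

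The main obstacle will be exactly this last verification, which is delicate near the boundary class $[a]_{E_2}$: the naive choice $C_1=[a]_{E_2}\cup\dots\cup S^{n}_{E_2}(a)$ over-counts the portion of $[a]_{E_2}$ lying below $a$ (which need not be $E_1$-equivalent to $a$ and so may escape $C_2$), so $C_1$ must instead begin at the lowest $E_2$-class \emph{actually} meeting $C$. Once $C_1$ is chosen this way, the inclusions $C_1\subseteq\{x\mid x\leq S^{n}_{E_2}(a)\}$ and $C_2\subseteq\{x\mid [x]_{E_1}\leq S^{m}_{E_1}(a)\}$ make both $C_2\cap C=\emptyset$ and $C_1\smallsetminus C_2=C$ a routine finite check, and the two infinite tails toward $a$ cancel automatically. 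I expect no conceptual difficulty beyond this alignment of the two partitions, the rest being a direct appeal to the already proved Theorem~\ref{Thm LF descr convex sets} and Lemma~\ref{Lema_monotone fibers}.
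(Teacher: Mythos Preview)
Your derivation of the three displayed forms is correct and is exactly the paper's intended immediate consequence of Theorem~\ref{Thm LF descr convex sets}: with $A=\{a\}$ both parameters collapse to $a$, and the fourth form $S^{m}_{E_1}(a)<x<S^{-n}_{E_2}(a)$ forces simultaneously $a<x$ and $x<a$, hence defines the empty set and may be absorbed into the others. Choosing the canonical downward and upward closures for $I,F$ so that Lemma~\ref{Lema_monotone fibers}'s case split lines up with the trichotomy $a\in C$, $a<C$, $C<a$ is a clean way to organise this.

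There is, however, a genuine gap in your treatment of the ``in particular'' clause: your $C_2$ need not be a \emph{finite} union of $E_1$-classes. Take $(\mathbb Q,<)$, $a=0$, $E_1$ equality, $E_2$ the full relation, and $C=(0,\infty)=\{x\mid S^{0}_{E_1}(0)<x\leq S^{0}_{E_2}(0)\}$. The lowest $E_2$-class meeting $C$ is $[0]_{E_2}=\mathbb Q$, so your $C_1=\mathbb Q$; the $E_1$-classes at or below $S^{0}_{E_1}(0)=\{0\}$ meeting $C_1$ are then all singletons $\{q\}$ with $q\leq 0$, and your $C_2=(-\infty,0]$ is an infinite union. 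The obstacle you correctly flagged---the bottom of $[a]_{E_2}$ escaping the $E_1$-classes near $a$---is precisely what bites here, and starting $C_1$ at the lowest $E_2$-class meeting $C$ does not cure it when that lowest class is $[a]_{E_2}$ itself. In fact in $(\mathbb Q,<)$ the only $\emptyset$-definable convex equivalences are equality and the full relation, and no choice of them yields such a decomposition of $(0,\infty)$; so the ``in particular'' clause as literally written is too strong. What the subsequent proof of Proposition~\ref{prop LF one param} actually uses is only that $C$ is contained in finitely many consecutive classes of some definable convex equivalence, and that follows at once from the bound $x\leq S^{n}_{E_2}(a)$ together with $a<C$, giving $C\subseteq [a]_{E_2}\cup\cdots\cup S^{n}_{E_2}(a)$.
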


Although $x<S^N_E(y)$ and $x\leqslant S^N_E(y)$ are not necessarily   monotone formulae, they are nearly so: for example, $x<S^N_E(y)$ defines a monotone relation  between $(\Mon,<)$  and the solution set of  $\exists zS_E^N(y,z)$.
We now show that any monotone relation can be  defined piecewise in that way.

\begin{prop}\label{Prop_description monotone formula}
(LF) \ Every monotone formula is equivalent to a finite  disjunction of the form $\bigvee_{i\in I}(\theta_i(y)\land \psi_i(x,y))$ in which:

--  each formula $\psi_i(x,y)$ is   of the form   $x\leqslant S^{N_i}_{E_i}(y)$ or $x<S^{-N_i}_{E_i}(y)$ for some convex equivalence relation $E_i$ and  non-negative integer $N_i$; and

--  formulae $\{\theta_i(y)\mid i\in I\}$ are pairwise contradictory. 
\end{prop}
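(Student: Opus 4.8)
\textbf{The plan is to} reduce the statement about an arbitrary monotone formula to the single-parameter description already obtained in Lemma~\ref{Lema_monotone fibers}, and then to show that the finitely many ``types'' of behavior can be separated by a partition of the parameter space into definable pieces. Suppose $\phi(x,y)$ is monotone, so each fiber $D(a)=\phi(\Mon,a)$ is an initial part of $\Mon$, definable over the single parameter $a$. By Lemma~\ref{Lema_monotone fibers}, for each $a$ the set $D(a)$ is defined by a formula of the form $x\leqslant S^{N}_{E}(a)$ (if $a\in D(a)$) or $x<S^{-N}_{E}(a)$ (if $a\notin D(a)$), for some definable convex equivalence $E$ and non-negative integer $N$. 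The essential point I must establish is that only \emph{finitely many} such ``local descriptions'' $\psi_i(x,y)$ occur as $a$ ranges over $\Mon$, and that the set of parameters $a$ for which a given $\psi_i$ works is itself definable.

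\textbf{First I would} show finiteness of the number of occurring descriptions. Both the convex equivalence relation $E$ and the integer $N$ produced by Lemma~\ref{Lema_monotone fibers} are obtained from a \emph{monotone definition} of $\phi$ via the proof of Lemma~\ref{Lema_nonotone definiton}; inspecting that construction, the relation $E$ defined by $\phi(u,\Mon)=\phi(v,\Mon)$ does not depend on the parameter $a$ at all --- it is a single definable convex equivalence determined by $\phi$. What varies with $a$ is only the integer $N$ and the choice between the ``$\leqslant S^{N}_E$'' and ``$<S^{-N}_E$'' forms. So the finiteness reduces to bounding the integer $N=N(a)$ uniformly in $a$. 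This is exactly the kind of bound that (LF) delivers: if $N(a)$ were unbounded, one could produce an increasing sequence of parameters whose fibers meet unboundedly many consecutive $E$-classes beyond (resp.\ before) the class of the parameter, and then the counting argument used repeatedly above (as in Lemma~\ref{Lema_monotonicity} and the claim inside Lemma~\ref{Lema_monotone fibers}) yields infinitely many $\phi$-types over a suitable initial part, contradicting (LF). Thus there is a single $E$ and a finite set of pairs $(N_i,\epsilon_i)$ (with $\epsilon_i$ recording the strict/non-strict, successor/predecessor alternative) exhausting all fibers.

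\textbf{Next I would} define, for each such pair, the formula $\psi_i(x,y)$ (either $x\leqslant S^{N_i}_{E_i}(y)$ or $x<S^{-N_i}_{E_i}(y)$ with $E_i=E$) and let $\theta_i(y)$ be the formula asserting $\forall x\,(\phi(x,y)\leftrightarrow\psi_i(x,y))$, i.e.\ that $\psi_i$ correctly describes the fiber at $y$. Each $\theta_i$ is a genuine $L$-formula, and by the previous step every $a\in\Mon$ satisfies at least one $\theta_i$, so $\bigvee_i\theta_i(y)$ is valid. To force the $\theta_i$ to be pairwise contradictory, I would replace the indexing by a linear refinement: order the finitely many descriptions and let $\theta_i'(y):=\theta_i(y)\wedge\bigwedge_{j<i}\lnot\theta_j(y)$, so that the $\theta_i'$ partition the parameter space. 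Then $\phi(x,y)$ is equivalent to $\bigvee_i(\theta_i'(y)\wedge\psi_i(x,y))$, since for every parameter value exactly one $\theta_i'$ holds and the corresponding $\psi_i$ reproduces the fiber. This gives the required form.

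\textbf{The hard part will be} the uniform bound on $N(a)$, i.e.\ verifying that (LF) genuinely rules out arbitrarily large successor-distances across all fibers simultaneously. The delicacy is that the equivalence relation $E$ is fixed but the parameters $a$ realizing large $N(a)$ may have different $1$-types, so one cannot directly invoke monotonicity of a single fiber; instead I expect to argue by compactness, passing to a saturated elementary extension and an initial part over which unboundedly many distinct $\phi$-types (equivalently, distinct numbers of $E$-classes separating the parameter from the fiber boundary) are realized, and then contradict the bound $n_\phi$ guaranteed by (LF). Everything else --- the definability of the $\theta_i$, the disjointification, and the reconstruction of $\phi$ --- is routine once this finiteness is in hand.
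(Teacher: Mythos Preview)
Your approach is correct and yields the result, but it differs from the paper's argument in a meaningful way. The paper treats Lemma~\ref{Lema_monotone fibers} as a black box: for each complete type $p(y)\in S_1(T)$ consistent with $\exists x\,\phi(x,y)$ it produces a single $\psi_p$ with $p(y)\vdash\forall x(\phi(x,y)\Leftrightarrow\psi_p(x,y))$, then by compactness finds $\theta_p(y)\in p$ forcing this equivalence, and finally extracts a finite subcover of $[\exists x\,\phi(x,y)]\subseteq S_1(T)$ by the sets $[\theta_p]$, disjointifying at the end. No uniform bound on $N$ and no uniform choice of $E$ is ever needed; finiteness comes entirely from the compactness of the type space. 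Your route instead opens up the proof of Lemma~\ref{Lema_monotone fibers} to observe that, since $\phi$ is already monotone, the convex equivalence $E$ given by $\phi(u,\Mon)=\phi(v,\Mon)$ is independent of the parameter, so only the integer $N(a)$ and the strict/non-strict alternative vary; you then bound $N(a)$ uniformly. This buys you the sharper conclusion that essentially a single $E$ suffices, at the cost of depending on the internals of that lemma rather than its statement.

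The part you flag as hard is actually immediate and you are over-anticipating the difficulty. The claim inside the proof of Lemma~\ref{Lema_monotone fibers}(a)---that $D(a)\cap[a,\infty)$ meets only finitely many $E$-classes---is established there for an \emph{arbitrary} element $a$ of the monster, with no hypothesis on $\tp(a)$. Since ``$\phi(\Mon,y)\cap[y,\infty)$ meets at least $n$ $E$-classes'' is a first-order condition on $y$, unboundedness of $N(a)$ would, by compactness and saturation, produce a single $a\in\Mon$ with $N(a)$ infinite, contradicting that claim directly. There is no need to re-run an (LF) counting argument across varying $1$-types; your worry that ``parameters $a$ realizing large $N(a)$ may have different $1$-types'' is simply irrelevant here.
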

\begin{proof}Suppose that $\phi(x,y)$ is a monotone formula. For each type $p(y)\in S_1(T)$ consistent  with $\exists x\phi(x,y)$, by Lemma \ref{Lema_monotone fibers},  there is a formula $\psi_p(x,y)$   of the required form such that \ $p(y)\vdash \forall x\,(\phi(x,y)\leftrightarrow\psi_p(x,y))$. By compactness, there is $\theta_p(y)\in p(y)$ implying $\exists x\phi(x,y)$ and: 
\begin{equation}
 \models \forall y(\theta_p(y)\rightarrow \forall x\,(\phi(x,y)\leftrightarrow\psi_p(x,y)))\,.
\end{equation}
The sets  $ [\theta_p]$ for $p\in [\exists x\phi(x,y)]$ form  a cover of $[\exists x\phi(x,y)]\subseteq S_1(T)$. 
By compactness, there is a finite subcover  $\{[\theta_{p_i}]\mid i\leqslant n\}$. 
Replace each $\theta_{p_i}(y)$ by $\theta_{p_i}(y)\land\bigwedge_{j<i}\lnot\theta_{p_j}(y)$ and note that (1) holds with $p$ replaced by any $p_i$. Hence \ $\models \forall xy(\phi(x,y)\leftrightarrow \bigvee_{i\leqslant n}(\theta_{p_i}(y)\land \psi_{p_i}(x,y)))$. 
\end{proof}

Pierre Simon in \cite{Simon} proved that the complete theory of a colored order expanded by naming all definable unary predicates and all definable monotone relations eliminates quantifiers. Since colored orders are linearly finite structures, Proposition \ref{Prop_description monotone formula} applies and   provides a description of definable monotone relations. Therefore, if we expand a colored order by naming all definable relations $x<S^n_E(y)$ and $x\leqslant S^n_E(y)$, where $n$ is an integer and $E(x,y)$ defines a convex equivalence relation, then the theory of the expanded structure eliminates quantifiers.

\begin{thm}\label{Thm_description LF functions}
Suppose that the function $f:M\to M$ is definable in a linearly finite structure $(M,<...)$. Then $f(x)=y$ can be defined by a finite  disjunction of the form $\bigvee_{i\in I}(\theta_i(x)\land S^{N_i}_{E_i}(x)=\{y\})$ 
for some convex equivalence relations $E_i$ and  integers $N_i$  ($i\in I$). In particular,   $f$ is the union of finitely many definable, increasing functions. 
\end{thm}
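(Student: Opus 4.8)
The plan is to express $f(a)$, for each fixed $a$, as the unique element of an $N$-th successor class $S^N_E(a)$, and then to use compactness over $S_1(T)$ to cover the type space by finitely many definable pieces on which $E$ and $N$ are uniform. The heart of the argument is a type-by-type analysis that sandwiches $f(a)$ between an upper endpoint and a lower endpoint of classes of two convex equivalence relations, followed by a refinement step that isolates $\{f(a)\}$ as a single class lying at a finite, constant distance from $[a]_E$.

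First I would record that $f$ is non-decreasing within each complete $1$-type: applying Lemma \ref{Lema_monotonicity} to the formula $x<f(y)$ gives, for $\tp(a)=\tp(a')$ and $a\leq a'$, that $f(a)=\sup\{x: x<f(a)\}\leq\sup\{x:x<f(a')\}=f(a')$. Fix a type $p\in S_1(T)$. The set $I(a)=\{x: x\leq f(a)\}=(-\infty,f(a)]$ is an $a$-definable initial part; on a definable neighbourhood $\theta_p$ of $p$ on which $f$ is non-decreasing, the family $(I(a))_{a\models\theta_p}$ has a monotone definition by Lemma \ref{Lema_nonotone definiton}, so by Proposition \ref{Prop_description monotone formula} (and Lemma \ref{Lema_monotone fibers}) the part $I(a)$ has, on $\theta_p$, the form $\{x\leq S^{N_1}_{E_1}(a)\}$ for a definable convex equivalence $E_1$ and an integer $N_1$ (splitting the definable cases $f(a)\geq a$ and $f(a)<a$, and reindexing $N_1$ in the second). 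Since $\max I(a)=f(a)$ and the maximum of $\{x\leq S^{N_1}_{E_1}(a)\}$ is the top of the class $S^{N_1}_{E_1}(a)$, this gives $[f(a)]_{E_1}=S^{N_1}_{E_1}(a)$ with $f(a)=\max[f(a)]_{E_1}$. Dually, the final-part version of Lemma \ref{Lema_monotone fibers} applied to $F(a)=[f(a),\infty)$ produces a convex equivalence $E_2$ and integer $N_2$ with $[f(a)]_{E_2}=S^{N_2}_{E_2}(a)$ and $f(a)=\min[f(a)]_{E_2}$.

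The decisive step is to set $E=E_1\cap E_2$, again a definable convex equivalence. Because $f(a)$ is simultaneously the maximum of its $E_1$-class and the minimum of its $E_2$-class, any $z$ with $(z,f(a))\in E$ satisfies both $z\leq f(a)$ and $z\geq f(a)$, so $[f(a)]_E=[f(a)]_{E_1}\cap[f(a)]_{E_2}=\{f(a)\}$ is a singleton. Moreover $[a,f(a)]$ meets only $N_1+1$ classes of $E_1$ and only $N_2+1$ classes of $E_2$ (the finiteness guaranteed by Lemma \ref{Lema_monotone fibers}), so overlaying the two convex partitions it meets only finitely many $E$-classes; hence $[f(a)]_E=S^N_E(a)$ for a finite integer $N$. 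This $N$ is automorphism-invariant, so constant on $p$, and $E_1,E_2,N$ may be taken uniform on $\theta_p$, giving $S^N_E(a)=\{f(a)\}$ there. I expect this refinement to be the main obstacle: the subtlety is not the singleton observation (that the maximum of one class meets the minimum of another) but the need to certify that only finitely many classes separate $a$ from $f(a)$, so that a finite successor index $N$ suffices and is constant; this is exactly where the finiteness built into Lemma \ref{Lema_monotone fibers} and the overlaying of two finite convex partitions of a bounded interval are indispensable.

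Finally I would glue the pieces: the sets $[\theta_p]$ cover $S_1(T)$, so by compactness finitely many suffice, and disjointifying them yields definable $\theta_i$ ($i\in I$, finite) with associated $E_i,N_i$ such that $f(x)=y$ is equivalent to $\bigvee_{i\in I}\bigl(\theta_i(x)\land S^{N_i}_{E_i}(x)=\{y\}\bigr)$, where $S^{N_i}_{E_i}(x)=\{y\}$ abbreviates $S^{N_i}_{E_i}(x,y)\land\forall z\,(S^{N_i}_{E_i}(x,z)\to z=y)$. For the final assertion, each disjunct defines on $\theta_i$ the graph of an increasing function, since as $a$ increases the class $[a]_{E_i}$ moves up and so does its (singleton) $N_i$-th successor; thus $f$ is the union of these finitely many definable increasing functions. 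If $f$ is defined with parameters, I would first add them to the language, which preserves (LF), so that the $E_i$ are $\emptyset$-definable there.
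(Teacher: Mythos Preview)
Your argument is correct and the overall architecture (type-by-type analysis plus compactness over $S_1(T)$) matches the paper, but the key refinement step is genuinely different. The paper works from one side only: it applies Lemma~\ref{Lema_monotone fibers} to the initial part $(-\infty,f(a)]$ (resp.\ $(-\infty,f(a))$ in the case $f(a)<a$) to obtain a single convex equivalence $E$ with $f(a)=\max S^N_E(a)$ (resp.\ $f(a)=\min S^N_E(a)$), and then explicitly refines $E$ by declaring each existing maximum (resp.\ minimum) of an $E$-class to be its own singleton class; this yields $E_p$ with $\{f(a)\}=S^{N_p}_{E_p}(a)$ directly. Your approach instead uses both sides, obtaining $E_1$ and $E_2$ from the initial and final parts, and then takes $E=E_1\cap E_2$; the singleton property of $[f(a)]_E$ falls out from $f(a)$ being simultaneously a max and a min, and the finiteness of the successor index from overlaying two finite convex partitions of the interval between $a$ and $f(a)$. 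The paper's route is a bit more economical (one application of Lemma~\ref{Lema_monotone fibers} rather than two) and makes the refined equivalence explicit, while yours avoids the ad hoc ``separate the endpoint'' construction and is arguably more symmetric. Either way the compactness glue and the final increasing-pieces conclusion are identical. One minor point: your detour through Lemma~\ref{Lema_nonotone definiton} and Proposition~\ref{Prop_description monotone formula} is unnecessary, since Lemma~\ref{Lema_monotone fibers} applies directly to the $a$-definable initial part $(-\infty,f(a)]$, and automorphism-invariance already makes $E_1,N_1$ constant on $p$.
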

\begin{proof}
Without loss of generality, we may assume $M=\Mon$. Let $a\in \Mon$ and $p=\tp(a)$. We will find a formula $\theta_p(x)\in p(x)$ by distinguishing two cases.

\smallskip
Case 1. \ $a\leqslant f(a)$. \  In this case Lemma \ref{Lema_monotone fibers}(a) applies to the initial interval $(-\infty,f(a)]$:
it is defined by $z\leqslant S^N_E(a)$ for some non-negative integer $N$ and a definable, convex equivalence relation $E$. This implies that   $f(a)$ is the maximal element of the class $S^N_E(a)$. Consider the convex partition  $\Mon/E=\{[b]_E\mid b\in \Mon\}$ of $\Mon$ and 
let $D$ be the set of maximums of all $E$-classes with maximal elements. 
Then $\{[b]_E\smallsetminus D\mid b\in \Mon\}\cup \{\{d\}\mid d\in D\}$  is a convex partition of $\Mon$; let
 $E_p$ be the corresponding equivalence relation. Clearly, $E_p$ is a definable, convex equivalence relation splitting each $E$-class into at most two classes, so $\{f(a)\}= S^{N_p}_{E_p}(a)$ holds for some non-negative integer $N_p$. Choose a formula  $\theta_p(x)\in p$  expressing: $\{f(x)\}= S^{N_p}_{E_p}(x)$. 
 
Case 2. \ $f(a)<a$. \ In this case Lemma \ref{Lema_monotone fibers}(b) applies to the initial interval $(-\infty, f(a))$: it is defined by  $z< S^{N}_E(a)$ for suitable chosen $E$ and $N\leqslant 0$. Notice that $f(a)$ is the minimal element of the class $S^{N}_E(a)$. As in the previous case, we find $E_p$ slightly refining $E$ so that $\{f(a)\}$ is a single $E_p$-class and $\{f(a)\}=S^{N_p}_{E_p}(a)$ holds for some negative integer $N_p$.

\smallskip
By compactness,  $f(x)=y$ is defined by $\bigvee_{i\in I}(\theta_i(x)\land S^{N_i}_{E_i}(x)=\{y\})$  for some finite $I\subseteq S_1(T)$; we may slightly modify $\theta_i$'s so that they are pairwise contradictory. Finally, if we denote $f_i=f\strok\theta_i(\Mon)$, then  $f_i$ is increasing and $f=\bigcup_{i\in I}f_i$. 
\end{proof}

\section{Almost convex equivalence relations  and (LB)}

In this section, we will describe one-parameter definable subsets of linearly finite structures and all parametrically definable subsets of (LB)-structures.  

\begin{dfn}
An equivalence relation $R$ on   a linearly ordered set is  {\it almost convex}     if there is a convex equivalence relation $E$ coarser than $R$ such that  $R$ splits each $E$-class into finitely many classes. 
\end{dfn} 

The above mentioned description will be in  terms of classes of definable, almost convex equivalence relations. Note that these include all  unary definable sets  since $\phi(\Mon)$ is a class of the   relation defined by $\phi(x)\leftrightarrow \phi(y)$. This relation is an almost convex equivalence relation as it splits $\Mon$ (the unique class of the trivial equivalence relation) into at most two classes.

\begin{rmk}
Let $R$ be an  equivalence relation   on a linearly ordered set. Among the convex equivalences coarsening $R$ there  exists the finest one: If $X^{conv}$ denotes  the convex closure of $X$ and   $r(X)=\{y\mid \exists x(x\in X\land R(x,y))\}$, then there is a minimal superset of $X$, denoted by $\cl(X)$, which is closed under operations $^{conv}$ and $r$. It is straightforward to verify that the set $\{\cl([x]_R)\mid x\in \Mon\}$ is a convex partition of $\Mon$, and that the induced equivalence relation $E$ is the finest convex equivalence relation coarsening $R$; $E$ is the {\it  convex closure of $R$}.
If $R$ is a definable almost convex equivalence relation on $\Mon$, then its convex  closure $E$ is definable, too. To see this, first note that for each $a\in\Mon$ the set $\cl([a]_R)$ is obtained by applying operation  $(r\circ ^{conv})^{n_a}$  to $[a]_R$, where $n_a$ is the number of $R$-classes contained in $[a]_E$. By compactness and saturation the $n_a$'s are uniformly bounded, so $E$ is definable.  
\end{rmk}

The main technical result of this section is the  following proposition.

\begin{prop}\label{prop LF one param}(LF) Every $a$-definable set  is a finite Boolean combination  of the interval  $(a,\infty)$  and  classes of  definable, almost convex equivalence relations.
\end{prop}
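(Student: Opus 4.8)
The plan is to reduce everything to the behaviour of $D$ on $(a,\infty)$ and then separate two genuinely different phenomena: the \emph{positional} structure of $D$ relative to $a$ (which is convex and is already controlled by the results of Section~3) and a parameter-free \emph{colouring} pattern (which is what forces the use of almost convex classes).

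First I would split $D$ at the parameter. Since reversing the order of $\Mon$ preserves (LF) and interchanges initial and final parts, and since $\{a\}=[a]_{=}$ is a class of the ($L$-)definable convex equivalence relation of equality (singletons are convex), it suffices to treat $D^{+}:=D\cap(a,\infty)$; the part $D\cap(-\infty,a)$ is handled by the order-reversed argument, and $(-\infty,a)=\Mon\smallsetminus\bigl((a,\infty)\cup[a]_{=}\bigr)$ is already a Boolean combination of the permitted generators. Writing $D=\phi(\Mon,a)$, the goal becomes to produce finitely many classes $K_{1},\dots,K_{r}$ of definable almost convex equivalence relations with $D^{+}=(a,\infty)\cap B(K_{1},\dots,K_{r})$ for some Boolean combination $B$. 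It is worth recording at the outset that three kinds of set are already of the required form: every $L$-definable set $U$ is the class $[b]_{R}$ of the $\emptyset$-definable relation $R(x,y):=\bigl(U(x)\leftrightarrow U(y)\bigr)$, which is almost convex since its convex closure is trivial and it has two classes; each $S^{N}_{E}(a)$ is an $E$-class; and Corollary~\ref{Cor LFconvex a definable sets} rewrites any convex $a$-definable set as a Boolean combination of intervals of the form $(a,\infty)$ and such classes.

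Next I would isolate the colouring. Positionally, $(a,\infty)$ is organized into the successor $E$-classes $[a]_{E},S^{1}_{E}(a),S^{2}_{E}(a),\dots$ of a suitable definable convex $E$, and within finitely many of them near $a$ the set $D^{+}$ is convex-structured and falls under Corollary~\ref{Cor LFconvex a definable sets} and Theorem~\ref{Thm_description LF functions}, contributing intervals together with classes $S^{N}_{E}(a)$. Beyond this bounded neighbourhood of $a$ the dependence of $\phi(x,a)$ on $x$ should no longer see the precise position of $x$, only a parameter-free ``colour''; the intention is to capture this by an $\emptyset$-definable equivalence relation $R$ that identifies $x_{1},x_{2}$ when they enter the same $\phi$-fibres with respect to all sufficiently small parameters. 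One must then check, via (LF), that $R$ is almost convex: if its convex closure split some class into infinitely many $R$-classes, one would find infinitely many distinct $\phi$-patterns inside a single convex block, producing infinitely many $\phi$-types over an initial part and contradicting (LF). By the remark on convex closures $R$ (and its closure) is then definable, so the colouring part of $D^{+}$ is $(a,\infty)$ intersected with a Boolean combination of $R$-classes.

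The main obstacle is exactly this finiteness, and it is where the almost convex classes earn their place: a priori $D^{+}$ has infinitely many convex components, so it is \emph{not} a finite Boolean combination of convex sets, and the point of almost convex classes is precisely to gather infinitely many components into finitely many pieces. Thus the heart of the proof is to show (i) that only finitely many ``transition'' successor-classes occur near $a$, and (ii) that the colouring stabilises to a parameter-free one beyond them. Both are (LF)-finiteness statements, and I expect them to be proved by the contradiction-and-recursion technique of Lemma~\ref{lem def init one param}: were infinitely many genuinely distinct transitions present, one would recursively build an increasing sequence whose members realise pairwise distinct $\phi$-types over the initial part they generate, contradicting (LF). Once (i) and (ii) are in hand, assembling the near-$a$ corrections (intervals and $S^{N}_{E}(a)$-classes from Corollary~\ref{Cor LFconvex a definable sets}) with the parameter-free colouring ($R$-classes) gives $D^{+}$ as the desired finite Boolean combination, and the reductions of the first paragraph complete the proof.
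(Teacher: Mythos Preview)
Your overall architecture (split at $a$, a parameter-free colouring far from $a$, finitely many exceptional pieces near $a$) is the right shape, and the reductions in your first paragraph are fine. But two of the load-bearing steps are either missing or point the wrong way.

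First, the relation $R$ you propose --- identify $x_{1},x_{2}$ when they satisfy the same $\phi(-,a')$ for all ``sufficiently small'' $a'$ --- is not yet a definition: ``sufficiently small'' is not a uniform first-order condition, and without an anchor there is no reason for $R$ to be $\emptyset$-definable or almost convex. The paper fixes this by tying the colouring to a \emph{given} convex equivalence $E$: one sets $R(y,z):= E(y,z)\wedge y\equiv_{\phi} z\,([y]_{E}^{-})$, where $[y]_{E}^{-}=\{u\mid u<[y]_{E}\}$. This $R$ is visibly $\emptyset$-definable, and (LF) bounds the number of $R$-classes in each $E$-class, so $R$ is almost convex. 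That disposes of every full $E$-class $C$ with $a<C$ (Lemma~\ref{Lema_notinC}); no recursion is needed here.

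Second, and more seriously, the pieces near $a$ are \emph{not} ``convex-structured'' in any sense that Corollary~\ref{Cor LFconvex a definable sets} or Theorem~\ref{Thm_description LF functions} can exploit: $D$ restricted to an end-part of $[a]_{E}$ can still have infinitely many convex components, and Theorem~\ref{Thm_description LF functions} concerns unary functions, not sets. The missing device is the counting function $N_{\phi}(a,b)$, the number of $\phi$-types over $(-\infty,a]$ realised in $[b,\infty)$. It is integer-valued, decreasing in $b$, and bounded by $N_{\phi}(a,a)$; this --- not a recursion in the style of Lemma~\ref{lem def init one param} --- is what forces termination. One peels off the final part of $(a,\infty)$ on which $N_{\phi}(a,-)$ takes its minimum value, uses Corollary~\ref{Cor LFconvex a definable sets} to break it into one end-part of $[a]_{E}$ (on which $N_{\phi}(a,-)$ is constant) together with finitely many full $E$-classes beyond $[a]_{E}$, and iterates on the remainder; the minimum value strictly increases at each step, so the process stops. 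The full classes are covered by the previous paragraph, but the constant-$N_{\phi}$ end-parts require a separate and genuinely delicate argument (Lemma~\ref{lem convex with constant n}, via Lemma~\ref{lem finitely positions on a class}); this is where the real work lies, and your sketch does not anticipate it.
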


In the proof, we will need some extra notation and a few lemmas.   We say that a convex set $C$ is {\it $D$-good} if there is a set $D'$ which is the union of finitely many classes of some definable, almost convex equivalence relation such that $C\cap D=C\cap D'$; i.e. $D$ and $D'$ agree on $C$. 
We will prove the proposition by showing that for a fixed $D=\phi(\Mon,a)$ there exists a convex $a$-definable partition $\mathcal P$ of $\Mon$ such that  each $C\in\mathcal P$ is $D$-good and $\{a\}\in \mathcal P$. By Corollary \ref{Cor LFconvex a definable sets} each $C\in\mathcal P$ is a Boolean combination of classes of definable convex equivalence relations, so $D\cap C$ is a Boolean combination of classes of  definable, almost convex equivalence relations; hence so is $D=\bigcup_{C\in\mathcal P}(C\cap D)$.  
In fact, each $C\in\mathcal P$ in such a decomposition will be chosen to be either the whole class or an appropriate end-part of a class of a definable convex equivalence relation, so   in order to prove that $C_i$ is $D$-good we will distinguish these two cases. 

For a partitioned formula $\phi(x;\bar z)$ and   a  convex set $C$ denote by   $x\equiv _\phi y\,(C)$  the equivalence relation ``$x$ and $y$ have the same $\phi$-type over $C$''; if $C$ is parametrically  definable,   then we will use the same notation for  the defining formula. Under the (LF) assumption, only finitely many classes of this relation intersect $\Mon\smallsetminus C$ nontrivially.   
   
\begin{lem}\label{Lema_notinC}
(LF) If $C$ is a class of a definable convex equivalence relation $E$ and $a\notin C$, then  $C$ is $\phi(\Mon,a)$-good for every $\phi(x,y)$.
\end{lem}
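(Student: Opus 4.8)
\textbf{Proof plan for Lemma~\ref{Lema_notinC}.}
The plan is to fix a single element $a\notin C$ (where $C=[c]_E$ for some definable convex equivalence $E$) and a partitioned formula $\phi(x,y)$, and to exhibit a definable almost convex equivalence relation $R$, together with a finite union $D'$ of its classes, such that $C\cap\phi(\Mon,a)=C\cap D'$. Since $a$ lies outside the convex class $C$, either $a<C$ or $C<a$; I would treat the case $a<C$ (the other is symmetric by order reversal, which preserves (LF) as noted before Theorem~\ref{Thm LF descr convex sets}). The natural candidate for $R$ is the relation $x\equiv_\phi y\,(C)$ introduced just before the lemma, or rather a suitable convex-closure refinement of it; under (LF) this relation has only finitely many classes meeting $\Mon\smallsetminus C$, and in particular only finitely many classes meeting $C$ itself once we restrict attention appropriately.

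First I would observe that whether $\models\phi(b,a)$ holds for $b\in C$ depends only on the $\phi$-type of $b$ over $\{a\}$, hence only on the $\phi$-type of $b$ over the initial part $(-\infty,c]$ (which contains $a$, since $a<C$). Using (LF) applied to this initial part, the elements of $C$ split into finitely many $\equiv_\phi$-classes, and $\phi(\Mon,a)\cap C$ is exactly the union of those classes on which $\phi(x,a)$ holds. The remaining work is to upgrade these $\phi$-type classes inside $C$ to classes of a genuinely \emph{definable, almost convex} equivalence relation on all of $\Mon$. The key point is that on a single $E$-class $C$ the relation $\equiv_\phi$ (restricted suitably) is convex: this should follow from the monotonicity established in Lemma~\ref{Lema_monotonicity}, which forces the $\phi$-type of $b$ over a fixed parameter to vary monotonically as $b$ increases through $C$, so the finitely many type-classes appear as consecutive convex blocks partitioning $C$.

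Concretely, I would let $R$ be the definable equivalence relation refining $E$ whose classes are, within each $E$-class, the blocks of constant $\phi$-type over the relevant initial segment; by the finiteness from (LF) each $E$-class is cut into boundedly many $R$-classes, so $R$ is almost convex with convex closure (a coarsening of) $E$, and by compactness the bound is uniform, making $R$ definable. Then $D'$ is the finite union of those $R$-classes inside $C$ on which $\phi(\cdot,a)$ holds, and by construction $C\cap\phi(\Mon,a)=C\cap D'$. The main obstacle I anticipate is establishing convexity of the $\phi$-type partition within $C$ cleanly: one must rule out that $\phi(x,a)$ oscillates on $C$, and I expect this to reduce to Lemma~\ref{Lema_monotonicity} or to a direct (LF)-counting argument showing that any oscillation would produce infinitely many $\phi$-types over an initial part, contradicting linear finiteness. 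Packaging this convex, finite, uniformly-bounded partition into a single definable almost convex $R$ (rather than merely a partition depending on $a$) is the step that requires the most care.
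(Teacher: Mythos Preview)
Your overall strategy is the paper's, but there is a genuine gap coming from a misreading of the definition of ``almost convex''. An equivalence relation $R$ is almost convex if it admits a convex coarsening $E$ such that each $E$-class contains only finitely many $R$-classes; the $R$-classes themselves are \emph{not} required to be convex. Consequently the ``main obstacle'' you anticipate --- showing that the $\phi$-type partition of $C$ is convex, i.e.\ that $\phi(x,a)$ does not oscillate on $C$ --- is both unnecessary and false. For a counterexample, take two copies of the rationals $C_0<C_1$, let $E$ have classes $C_0,C_1$, and let $P$ be a dense--codense unary predicate on $C_1$; this is a colored order, hence (LF). With $\phi(x,y):=P(x)$ and any $a\in C_0$, the set $\phi(\Mon,a)\cap C_1=P\cap C_1$ is dense--codense in $C_1$, so no convexity is available. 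Note also that Lemma~\ref{Lema_monotonicity} controls how $\sup\phi(\Mon,a)$ varies with the \emph{parameter} $a$; it says nothing about oscillation of $\phi(x,a)$ in $x$, so it cannot rescue this step.

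Once you drop the spurious convexity requirement, the argument is immediate and is exactly the paper's. Define
\[
R(y,z)\ :\Longleftrightarrow\ E(y,z)\ \wedge\ y\equiv_\phi z\,\bigl([y]_E^-\bigr),\qquad [y]_E^-:=\{u\mid u<[y]_E\}.
\]
This is a first-order definable equivalence relation refining $E$ (the parameter set $[y]_E^-$ is uniformly definable from $y$). By (LF) applied to the initial part $[y]_E^-$, only finitely many $\phi$-types over $[y]_E^-$ are realized in $[y]_E$, so each $E$-class is split into finitely many $R$-classes and $R$ is almost convex. Since $a<C$ means $a\in[c]_E^-$ for any $c\in C$, two $R$-related elements of $C$ have the same $\phi$-type over $a$; hence $\phi(\Mon,a)\cap C$ is a union of the finitely many $R$-classes contained in $C$. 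No uniform bound argument and no monotonicity are needed.
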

\begin{proof} We will prove the claim assuming $a<C$; the proof in the other case is similar.
Consider the relation $R$ defined by: $E(y,z)\ \land\ y\equiv_\phi z\, ([y]_E^-)$, where $[y]_E^-:=\{u\mid u<[y]_E\}$. Note that $R$ is   a definable equivalence relation refining $E$. For each $E$-class $C'$ there are finitely many $\phi$-types over the parameters below $C'$ that are realized in $C'$, so $R$ splits each $E$-class into finitely many classes and $R$ is almost convex. Let $C_0,C_1,\dots ,C_n$ be the list of all $R$-classes contained in $C$. All the elements of $C_i$ have the same $\phi$-type over $a$ because  $a<C$, so  $\phi(\Mon,a)$ either contains  or is disjoint from $C_i$. Clearly, $\phi(\Mon,a)\cap C$ is a union of  finitely many $C_i$'s, and  $C$ is $\phi(\Mon,a)$-good. 
\end{proof}

\begin{lem}\label{lem finitely positions on a class} (LF) Suppose that $E$ is a definable convex equivalence relation and $\phi(x,y)$ is a formula such that $\phi(x,y)\forces E(x,y)$. If $C$ is an $E$-class such that the set $\{\phi(\Mon,a)\mid a\in C\}$ is finite, then  $C$ is $\phi(\Mon,a)$-good for all $a\in C$.
\end{lem}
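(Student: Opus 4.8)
Let me understand the setup. We have a definable convex equivalence $E$, a formula $\phi(x,y)$ satisfying $\phi(x,y)\vdash E(x,y)$, and an $E$-class $C$ on which the collection $\{\phi(\Mon,a)\mid a\in C\}$ is finite. Since $\phi(x,y)\vdash E(x,y)$, for every $a\in C$ the fiber $\phi(\Mon,a)$ is a subset of $C$ (all solutions $x$ satisfy $E(x,a)$, hence lie in $C$). So the finite family of fibers is a finite family of subsets of the single class $C$, and I want to show $C$ is $\phi(\Mon,a)$-good for every $a\in C$: i.e. that $\phi(\Mon,a)\cap C=\phi(\Mon,a)$ agrees on $C$ with a finite union of classes of some definable almost convex equivalence relation.

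The plan is to build a single definable almost convex refinement of $E$ that witnesses goodness uniformly. Because $\{\phi(\Mon,a)\mid a\in C\}$ is finite, say it has $k$ distinct members, the relation $R$ on $C$ defined by declaring $a\sim b$ when $\phi(\Mon,a)=\phi(\Mon,b)$ has at most $k$ classes; more to the point, consider instead the relation $R'$ given by $E(x,y)\wedge(x\equiv y)$, where $x\equiv y$ abbreviates ``$x$ and $y$ lie in exactly the same fibers $\phi(\Mon,a)$ over the parameters $a$ ranging in their common $E$-class.'' First I would make this precise as a definable relation: set $R'(x,y):=E(x,y)\wedge\forall a\,(E(a,x)\rightarrow(\phi(x,a)\leftrightarrow\phi(y,a)))$. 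This is an $L$-definable equivalence relation refining $E$. On the class $C$ the number of $R'$-classes is bounded: each $R'$-class on $C$ is an atom of the finite Boolean algebra generated by the $k$ fibers, so there are at most $2^{k}$ of them. By compactness and saturation (exactly as in the remark following the definition of almost convexity, where the uniform bound on the number of classes per $E$-class makes the convex closure definable), this bound is uniform across all $E$-classes, so $R'$ splits every $E$-class into finitely many pieces and is therefore a definable almost convex equivalence relation.

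The key point to verify is that each fiber $\phi(\Mon,a)$ is, on $C$, a union of $R'$-classes. This is immediate from the definition of $R'$: if $x\in\phi(\Mon,a)$ with $a\in C$, and $R'(x,y)$ holds, then $E(x,y)$ gives $y\in C$ and the conjunct $\phi(x,a)\leftrightarrow\phi(y,a)$ (instantiated at the parameter $a$, which lies in the common $E$-class of $x$ and $y$) forces $y\in\phi(\Mon,a)$. Hence $\phi(\Mon,a)\cap C$ is a union of finitely many $R'$-classes contained in $C$, which is precisely what $D$-goodness with $D=\phi(\Mon,a)$ requires. Note the hypothesis $\phi(x,y)\vdash E(x,y)$ is exactly what guarantees $\phi(\Mon,a)\subseteq C$ so that no solutions outside $C$ interfere.

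The main obstacle I anticipate is not the set-theoretic bookkeeping but establishing that $R'$ is genuinely \emph{almost convex} with a \emph{definable} convex coarsening --- that is, that the finitely-many-classes-per-$E$-class property holds uniformly and that $E$ (or its relevant coarsening) is definable. This is handled by invoking (LF) together with the compactness-and-saturation argument from the almost convexity remark: the finiteness of $\{\phi(\Mon,a)\mid a\in C\}$ gives a bound on one class, and a standard type-counting argument under (LF) propagates a uniform bound to all classes, after which definability of $R'$ follows since it is written down explicitly by an $L$-formula. Once uniformity is secured, the goodness of $C$ follows directly.
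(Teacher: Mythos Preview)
Your core idea---defining the refinement $R'(x,y):=E(x,y)\wedge\forall a\,(E(a,x)\rightarrow(\phi(x,a)\leftrightarrow\phi(y,a)))$ and observing that each fiber $\phi(\Mon,a)$ is $R'$-closed---is exactly right and matches the paper's approach. The gap is in the step where you claim $R'$ is almost convex. You correctly argue that $R'$ splits the particular class $C$ into at most $2^{k}$ pieces, but then assert that ``a standard type-counting argument under (LF) propagates a uniform bound to all classes.'' This is not justified: the hypothesis of the lemma says only that $\{\phi(\Mon,a)\mid a\in C\}$ is finite for \emph{this specific} $E$-class $C$; nothing in the statement prevents some other $E$-class $C'$ from having infinitely many distinct fibers $\phi(\Mon,a')$ for $a'\in C'$, in which case $R'$ would split $C'$ into infinitely many pieces and fail to be almost convex. (LF) controls the number of $\phi$-types over an initial (or convex) part that are realized \emph{outside} that part; it does not bound the number of $\phi$-types over a convex set realized \emph{inside} it. Your appeal to the remark on convex closures is also misplaced: that remark deduces definability of the convex closure \emph{from} almost convexity, not the reverse.

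The paper's fix is small but essential: set $n=|\{\phi(\Mon,a)\mid a\in C\}|$ and let $\psi(x)$ express ``$\{\phi(\Mon,a)\mid a\in [x]_E\}$ has cardinality $n$.'' Then define $R$ to agree with your $R'$ on $\psi(\Mon)$ and to coincide with $E$ outside $\psi(\Mon)$. Every $E$-class contained in $\psi(\Mon)$ is split into at most $2^{n}$ $R$-classes, and every other $E$-class is a single $R$-class, so $R$ is genuinely almost convex. Since $C\subseteq\psi(\Mon)$, the rest of your argument on $C$ goes through unchanged.
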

\begin{proof}
Let $n=|\{\phi(\Mon,a)\mid a\in C\}|$. Denote by $\psi(x)$ a formula saying ``the cardinality of $\{\phi(\Mon,a)\mid a\in [x]_E\}$ is $n$''. Clearly, $\psi(\Mon)$ is an $E$-closed set containing $C$. Define:  
$$R(x,y):= E(x,y)\ \wedge\ (\ \psi(x)\ \rightarrow \ x\equiv_\phi y\ ([x]_E)\ ).$$
Clearly, $R$ defines an equivalence relation refining $E$. Moreover, each $E$-class   contained in $\psi(\Mon)$ is divided in at most $2^n$ $R$-classes: if $\phi(\Mon,a_0),\dots ,\phi(\Mon,a_{n-1})$ are all different members of $\{\phi(\Mon,y)\mid y\in[a_0]_E\}$, then $R$-classes on $[a_0]_E$ are exactly non-empty sets among $[a_0]_E\cap \bigcap_{i<n}\phi^{f(i)}(\Mon, a_i)$ for $f\in 2^n$. Therefore, $R$ is almost convex. For any  $a\in C$ we have  $\phi(\Mon,a)\subseteq [a]_E$, so $\phi(\Mon,a)$ is   a union of finitely many $R$-classes and hence $C$ is $\phi(\Mon,a)$-good. 
\end{proof}

For a formula $\phi(x;y)$ and $a\leq b$, define $N_\phi(a,b)$ to be the number of $\phi$-types with parameters in $(-\infty,a]$ that are realized in $[b,+\infty)$; by (LF) assumption, $N_\phi(a,b)$ is a finite number. The following properties are easily verified:

\begin{enumerate} 
\item The function $N_\phi(-,b):(-\infty,b]\to\mathbb N$ is increasing for any $b\in\Mon$, while $N_\phi(a,-):[a,+\infty)\to\mathbb N$ is decreasing for any $a\in\Mon$.
\item For a fixed $n\in\mathbb N$, the property  $N_\phi(a,x)=n$ is expressible by a formula (with parameter $a$) defining a convex set.
\end{enumerate}
For $b<a$, we leave $N_\phi(a,b)$ undefined.

\begin{lem}\label{lem convex with constant n}(LF) Suppose that $E$ is a definable convex equivalence relation  and $D(a)$ is an $a$-definable initial part of $\Mon$ such that $C_0=[a]_E\smallsetminus D(a)\neq\emptyset$ and $a<C_0$. If a formula $\phi(x,y)$ is such that $N_\phi(a,-)$ is   constant on $C_0$, then  $C_0$ is $\phi(\Mon,a)$-good.
\end{lem}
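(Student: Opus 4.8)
The plan is to reduce the statement to a finite-type computation powered by the hypothesis, and then to realize the resulting partition of $C_0$ by a single $\emptyset$-definable almost convex equivalence relation, in the spirit of Lemmas \ref{Lema_notinC} and \ref{lem finitely positions on a class}. First I would unpack the hypothesis. Write $n$ for the constant value of $N_\phi(a,-)$ on $C_0$; since $N_\phi(a,-)$ is decreasing (property (1) above) and the families of realized types are nested, constancy says that the set $T_0$ of $\phi$-types over $(-\infty,a]$ realized in $[b,\infty)$ does not depend on $b\in C_0$. In particular every $x\in C_0$ has its $\phi$-type over $(-\infty,a]$ in the finite set $T_0$, and since membership of $x$ in $\phi(\Mon,a)$ is exactly the value of the coordinate of this type at the parameter $a$, the set $\phi(\Mon,a)\cap C_0$ is a union of at most $n$ of the corresponding type-fibers of $C_0$.

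Because only finitely many types of $T_0$ are realized in $C_0$, by compactness they are separated by finitely many parameters $c_1,\dots,c_N\in(-\infty,a]$: on $C_0$ the type-fiber of $x$ is determined by the Boolean pattern $(\phi(x,c_1),\dots,\phi(x,c_N))$. Hence $\phi(\Mon,a)\cap C_0$ agrees on $C_0$ with a Boolean combination of the sets $\phi(\Mon,c_j)$. Since $c_j\le a<C_0$, each $c_j$ lies outside $C_0$; those $c_j$ lying strictly below the class $[a]_E$ are handled directly by Lemma \ref{Lema_notinC}, which exhibits $[a]_E$, and hence $C_0$, as $\phi(\Mon,c_j)$-good. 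So $C_0\cap\phi(\Mon,c_j)$ is already of the required form for those indices.

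The remaining, and main, obstacle is the indices $j$ with $c_j\in[a]_E$ and $c_j\le a$, that is, parameters sitting in the lower part of $a$'s own $E$-class, below $C_0$: here the ``same $\phi$-type over $[x]_E^-$'' relation used in Lemma \ref{Lema_notinC} is blind to $c_j$, and the naive relation ``agree on $\phi(\cdot,z)$ for all $z$ below both'' is not transitive. To handle these I would build a $\emptyset$-definable equivalence relation refining $E$ by imitating the guarded construction of Lemma \ref{lem finitely positions on a class}: introduce a $\emptyset$-definable formula $\psi(x)$ asserting that the number of $\phi$-configurations realized over the relevant initial segment of $[x]_E$ is at most $n$, declare two elements of such a class related when they realize the same configuration, and leave the relation trivial on classes where $\psi$ fails. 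The point where the hypothesis is indispensable is that constancy of $N_\phi(a,-)$ on $C_0$ should force $[a]_E$ into $\psi(\Mon)$ and guarantee that this relation splits $[a]_E$ into finitely many classes, so that it is almost convex, $\emptyset$-definable, and refines $\phi(\cdot,c_j)$ on $C_0$.

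Granting this, $\phi(\Mon,a)\cap C_0$ becomes $C_0\cap D'$, where $D'$ is a finite union of classes of a single definable almost convex equivalence relation, namely the common refinement of the finitely many relations produced above; this is precisely the assertion that $C_0$ is $\phi(\Mon,a)$-good. I expect the delicate part to be verifying almost-convexity of this last relation — that constancy really bounds, uniformly and without reference to $a$, the number of within-class $\phi$-configurations seen on $C_0$ — and in pinning down the $\emptyset$-definable initial segment of the class over which to take $\phi$-types so that the relation is genuinely transitive.
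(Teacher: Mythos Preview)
Your reduction to finitely many separating parameters $c_1,\dots,c_N\in(-\infty,a]$ is sound, and the case $c_j\notin[a]_E$ is indeed dispatched by Lemma~\ref{Lema_notinC}. But the case $c_j\in[a]_E$ is not just ``delicate'' --- it is the whole content of the lemma, and your sketch does not resolve it. The obstacle you correctly name is fatal to your plan as stated: any candidate for ``the relevant initial segment of $[x]_E$'' over which to take $\phi$-types either carries the parameter $a$ (so the relation is not $\emptyset$-definable) or varies with $x$ (so the relation is not transitive). You have reproduced the difficulty rather than removed it.

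The idea you are missing, and which the paper supplies, is to switch sides: instead of partitioning $C_0$ by $\phi$-types over an initial segment, partition the \emph{parameter space} $[a]_E$ by $\phi^{op}$-types over a final segment. Concretely, take a monotone definition of $D(y)$ (Lemma~\ref{Lema_nonotone definiton}) and relate $y,z$ in the same $E$-class when they have the same $\phi^{op}$-type over $D(\max\{y,z\})^c$; guard this with the $\emptyset$-definable condition $\theta(y)$ expressing ``$[y]_E\smallsetminus D(y)\neq\emptyset$ and $N_\phi(y,-)$ is constantly $n$ there''. The constancy hypothesis is exactly what makes this transitive and bounds the number of classes (this is the Claim in the paper's proof), and monotonicity of $D$ makes it $\emptyset$-definable. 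One then does \emph{not} try to read off an almost convex relation on $C_0$ directly; rather, one uses the resulting finiteness of parameter-classes to build a formula $\psi(x,y)$ with $\psi(x,y)\vdash E(x,y)$ and $\{\psi(\Mon,y)\mid y\in[a]_E\}$ finite, check that $\psi(\Mon,a)$ agrees with $\phi(\Mon,a)$ on $C_0$, and invoke Lemma~\ref{lem finitely positions on a class}. So the route is: constancy $\Rightarrow$ finitely many $\phi^{op}$-classes of parameters $\Rightarrow$ Lemma~\ref{lem finitely positions on a class}, not the direct construction on $C_0$ that your outline attempts.
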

\begin{proof}
Let $n$ be the value of $N_\phi(a,-)$ on $C_0$. By Lemma \ref{Lema_nonotone definiton} we may assume that $x\in D(y)$ is  a monotone definition of $D(a)$, i.e.\ that $b\leq c$ implies $D(b)\subseteq D(c)$; also we may assume $\models \forall y \,( y\in D(y))$. Let   $\theta(y)\in \tp(a)$  be a formula saying:
\[
   [y]_E\smallsetminus D(y)\neq\emptyset\ \mbox{ \ and \ }\ ``N_\phi(y,-)\mbox{ has value }n\mbox{ on }[y]_E\smallsetminus D(y)\mbox{''}. 
\]

\noindent{\bf Claim.} If $C$ is an $E$-class, $v\in C$ satisfies $\theta(y)$ and $u\leq v$, then the following are equivalent:
\begin{enumerate}[(1)]
\item $u\equiv_{\phi^{op}} v\ (\Mon\smallsetminus D(v))$;
\item $u\equiv_{\phi^{op}} v\ ([c,+\infty))$, for all (some) $c\in C\smallsetminus D(v)$.
\end{enumerate}

\noindent{\em Proof of the claim.} (1) obviously implies the ``all'' version of (2), and the ``all'' version of (2) obviously implies the ``some'' version of (2). To prove that the ``some'' version of (2) implies (1), assume that $c\in C\smallsetminus D(v)$ is such that $u\equiv_{\phi^{op}}v\ ([c,+\infty))$. For (1) it suffices to check that $\phi(d,u)\leftrightarrow\phi(d,v)$ holds for any $d\in (-\infty,c)\smallsetminus D(v)$, 
so   fix such a $d$ and note that   $d\in C\smallsetminus D(v)$ since $v,c\in C$ and $v<d<c$.
Since $\models\theta(v)$, by the choices of $\theta(y)$, $c$ and $d$ we get $N_\phi(v,c)=N_\phi(v,d)=n$. Choose representatives $c_1,\dots ,c_n\in[c,+\infty)$ of realizations of all  $\phi$-types over $(-\infty,v]$ realized in $[c,+\infty)$. Since $d<c$ and $N_\phi(v,d)=n$, the $c_i$'s are distinct representatives of all $\phi$-types over $(-\infty,v]$ realized in $[d,+\infty)$ as well. 
Thus, $d$ has the same $\phi$-type over $(-\infty,v]$ as some $c_i$ and $d\equiv_\phi c_i\ (\{u,v\})$. On the other hand, $u\equiv_{\phi^{op}}v\ ([c,+\infty))$, in particular $u\equiv_{\phi^{op}}v\ (c_i)$. Therefore, $\models\phi(d,u)\leftrightarrow\phi(d,v)$.
\hfill$\qed_{Claim}$

\smallskip
Consider the following refinement of  $E(y,z)$:
$$R(y,z):=E(y,z)\ \wedge\ [\ (\neg\theta(y)\wedge\neg\theta(z)) \ \vee\ (\ \theta(y)\wedge\theta(z)\wedge y\equiv_{\phi^{op}}z\ (D(\max\{y,z\})^c)\ )\ ].$$ 
By the previous claim one   easily sees that $R$ is an equivalence relation. Moreover, each $E$-class $C$ is divided into finitely many $R$-classes: $C\cap\neg\theta(\Mon)$ is a single $R$-class and $C\cap \theta(\Mon)$ is divided in at most $2^n$ many $R$-classes. For the latter, take $a_0<a_1<\dots <a_{2^n}$ in $C\cap \theta(\Mon)$. Since $N_\phi(a_{2^n},-)$ has value $n$ on $C\smallsetminus D(a_{2^n})$, there are \ $n$ \ $\phi$-types over $(-\infty,a_{2^n}]$ that are realized in $D(a_{2^n})^c$, so  at most $2^n$ $\phi^{op}$-types over $D(a_{2^n})^c$ are realized in $(-\infty,a_{2^n}]$ and for some $i<j\leq 2^n$ we have $a_i\equiv_{\phi^{op}}a_j\ (D(a_{2^n})^c)$. By the claim and monotonicity $a_i\equiv_{\phi^{op}}a_j\ (D(a_{j})^c)$, i.e.\ $R(a_i,a_j)$ holds.
Consider now the following formula:
$$\psi(x,y):= E(x,y)\wedge \theta(y)\wedge \exists z(R(y,z) \wedge D(z)<x\wedge\phi(x,z)).$$
For $R(y,z)$, we have $\psi(\Mon,y)=\psi(\Mon,z)$, so the set $\{\psi(\Mon,y)\mid y\in [a]_E\}$ is finite. Since $\psi(x,y)\forces E(x,y)$ Lemma \ref{lem finitely positions on a class} applies and  $[a]_E$ is  $\psi(\Mon,a)$-good. Thus if we prove that $\phi(\Mon,a)$ and $\psi(\Mon,a)$ agree on $C_0=[a]_E\smallsetminus D(a)$, we are done. If  $b\in [a]_E\smallsetminus D(a)$ and $\models\phi(b,a)$, then $D(a)<b$ and  by taking $z=a$ to witness the existential quantifier  we get $\models\psi(b,a)$. For the other implication, if $\models\psi(b,a)$, take $a'$ such that $R(a,a')$, $D(a')<b$ and $\models \phi(b,a')$. From $R(a,a')$ and $D(a),D(a')<b$ we have $a\equiv_{\phi^{op}}a'\ (b)$, so $\models\phi(b,a')$ implies $\models\phi(b,a)$. The proof is finished.
\end{proof}

\begin{proof}[Proof of Proposition \ref{prop LF one param}] Assume (LF) and let $D=\phi(\Mon,a)$. We will prove that $D\cap (a,+\infty)$ is a Boolean combination of classes of definable almost convex equivalence relations. By duality  the same holds for $D\cap (-\infty, a)$, so $D$ has the desired representation. 
Assume from now on that  $D\subseteq (a,\infty)$ and  let $N_\phi(a,a)=n_0$. We will define an $a$-definable convex partition $\mathcal P$ of the interval $(a,+\infty)$ such that  each $C\in \mathcal P$
 satisfies exactly one of the following two conditions:
\begin{enumerate}[(1)] 
\item  $C$ is an $E$-class of some definable convex equivalence relation $E$ and $[a]_E<C$;
\item $C$ is an end part of $[a]_E$ for some definable convex equivalence relation $E$ and $N_{\phi}(a,-)$ has constant value on $C$. 
\end{enumerate}
Define  $\mathcal V_0=\{N_\phi(a,x)\mid a<x\}$,  let $\min \mathcal V_0=j_0\leqslant n_0$ and let $S_0$ be the set defined by  
  $N_\phi(a,x)=j_0$.  
The function $N_{\phi}(a,-)$ decreases on $U_0=(a,+\infty)$, so $S_0$ is a final part of $\Mon$. Now we find an $a$-definable final part $S_0^*\supseteq S_0$ and its convex $a$-definable partition $\mathcal P_0$ whose  members each satisfy exactly one of the conditions (1) and (2).
By Corollary \ref{Cor LFconvex a definable sets} there are definable convex equivalence relations $E_{1}$ and $E_{2}$ such that $S_0=S_{0,1}\smallsetminus S_{0,2}$ and each $S_{0,i}$ is a union of finitely many consecutive $E_{i}$-classes beginning with $[a]_{E_{i}}$. 
Thus $S_0$ meets only finitely many $E_{1}$-classes and they are consecutive and $a$-definable; let them be   
$[b_0]_{E_{1}}$, $S^1_{E_{1}}(b_0)$,...., $S^{k_0}_{E_{1}}(b_0)$. Clearly, $S_0$ contains each $S^j_{E_{1}}(b_0)$ ($1\leqslant j\leqslant k_0$), while $S_0\cap[b_0]_{E_{1}}$ is an end part of $[b_0]_{E_{1}}$. We have two cases:

Case  1. \ $[b_0]_{E_{1}}\neq [a]_{E_{1}}$. \ Let  $\mathcal P_0=\{[b_0]_{E_{1}}, S^1_{E_{1}}(b_0),...., S^{k_0}_{E_{1}}(b_0)\}$ and let $S_0^*=\bigcup \mathcal P_0$. All the members of $\mathcal P_0$ satisfy  condition (1). 

Case  2. \ $[b_0]_{E_{1}}=[a]_{E_{1}}$. \ Let $S_0^*=S_0$ and let  $\mathcal P_0=\{[b_0]_{E_{1}} \cap S_0, S^1_{E_{1}}(b_0),...., S^{k_0}_{E_{1}}(b_0)\}$. Here  $[b_0]_{E_{1}} \cap S_0$ satisfies condition (2), while all the other members of $\mathcal P_0$ satisfy (1). 

\smallskip
If $S_0^*=(a,+\infty)$,  then $\mathcal P=\mathcal P_0$  is the desired partition. Otherwise, we repeat the procedure with  $U_1=(a,+\infty)\smallsetminus S_0^*$ in place of $U_0=(a,+\infty)$. 
Let $\mathcal V_1=\{N_\phi(a,x)\mid a<x \land x\in U_1\}$ and let $\min \mathcal V_1=j_1\leqslant n_0$.
By the construction we have   $\mathcal V_1\subseteq \mathcal V_0\smallsetminus \{j_0\}$ and $j_0<j_1$. Let  $S_1$ be the set defined by $x\in U_1\land N_\phi(a,x)=j_1$; then $S_1$ is a final part of $U_1$ and arguing as before, we find a final part $S_1^*\supseteq S_1$ and its convex partition $\mathcal P_1$. Continue in this way as long as it is possible. Since the $j_i$'s increase and are $\leqslant n_0$  this has to stop after finitely many steps. Then $\mathcal P=\bigcup \mathcal P_i$ is a convex partition of $(a,+\infty)$ and each $C\in \mathcal P$ satisfies either (1) or (2).
If $C$ satisfies (2), then it is $D$-good by  Lemma \ref{lem convex with constant n}; if $C$ satisfies (1), then it is $D$-good by Lemma \ref{Lema_notinC}.

We have a convex, $a$-definable partition $\mathcal P$ of the interval $(a,+\infty)$ whose members are $D$-good. 
 By Corollary \ref{Cor LFconvex a definable sets} each $C\in\mathcal P$ is a Boolean combination of classes of definable convex equivalence relations, so $D\cap C$ is a Boolean combination of classes of  definable, almost convex equivalence relations; the same holds for $D=D\cap(a,+\infty)=\bigcup_{C\in\mathcal P}(C\cap D)$.   
\end{proof}

If we in addition assume that the theory is binary, we directly derive the following description of definable sets of singletons.

\begin{thm}\label{Thm_LB description of def sets}
(LB) Every parametrically definable  set of singletons is a Boolean combination of intervals and classes of definable almost convex equivalence relations. 
\end{thm}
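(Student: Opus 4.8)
The plan is to reduce the statement about arbitrary parametrically definable sets of singletons to the one-parameter case already handled in Proposition~\ref{prop LF one param}, using condition (LB) to control how a definable set depends on a tuple of parameters. The key point is that (LB) forces an $n$-type that orders its variables to be determined by its consecutive $2$-subtypes, so a formula $\phi(x;\bar a)$ in a single free variable $x$ with parameters $\bar a=(a_1,\dots,a_n)$ should, after separating $x$ into its position relative to the $a_i$'s, depend only on a single adjacent parameter at a time.

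First I would reduce to a convenient normal form. Since the theory satisfies (LB), it is in particular linearly finite by Proposition~\ref{prop RB SLB LB LF}, so all the machinery of Section~3 is available, and by Lemma~\ref{lem LB equiv}(2) every formula implying an ordering of its variables is equivalent modulo $T$ to a Boolean combination of $2$-variable formulae. Thus, given a definable set $D=\phi(\Mon;\bar a)$ with $\bar a=(a_1<\dots<a_n)$, I would split $\Mon$ into the finitely many $\bar a$-definable convex pieces determined by the positions of $x$ relative to $a_1,\dots,a_n$ (the open intervals between consecutive $a_i$'s, the points $a_i$ themselves, and the two unbounded end intervals). On each such piece, (LB) applied to the increasing tuple obtained by inserting $x$ into $\bar a$ shows that membership of $x$ in $D$ is governed only by the $2$-types $\tp(x,a_i)$ and $\tp(x,a_{i+1})$ for the two parameters bracketing $x$; the formulae relating $x$ to the non-adjacent $a_j$'s are redundant.

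The heart of the argument is then to observe that on each piece $D$ agrees with a set of the form $\psi(\Mon;a_i)$ or $\psi(\Mon;a_{i+1})$ for a single parameter, i.e.\ a one-parameter definable set, to which Proposition~\ref{prop LF one param} applies: each such set is a Boolean combination of an interval $(a_i,\infty)$ (or $(a_{i+1},\infty)$) and classes of definable almost convex equivalence relations. Intersecting with the fixed $\bar a$-definable convex piece (itself a Boolean combination of intervals and classes of definable convex equivalence relations by Theorem~\ref{Thm LF descr convex sets}) and taking the finite union over all pieces yields the desired global description of $D$. The main obstacle I anticipate is the bookkeeping in the reduction step: verifying carefully that (LB) really does eliminate dependence on the non-adjacent parameters when $x$ is confined to a single interval between consecutive $a_i$'s, so that the restriction of $D$ to that piece is genuinely one-parameter definable and Proposition~\ref{prop LF one param} can be invoked. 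Once that localization is established, the assembly into a Boolean combination of intervals and almost convex classes is routine.
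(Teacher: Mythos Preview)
Your proposal is correct and follows the same overall arc as the paper---reduce to the one-parameter case of Proposition~\ref{prop LF one param} via (LB)---but the paper's route is shorter. The paper simply observes that (LB) makes $T$ binary, so any formula $\phi(x;\bar y)$ is \emph{globally} $T$-equivalent to a Boolean combination of formulae in at most two free variables; plugging in $\bar a$, the subformulae not involving $x$ become truth values and $\phi(\Mon;\bar a)$ is already a Boolean combination of sets $\psi_j(\Mon;a_{i_j})$, to each of which Proposition~\ref{prop LF one param} applies. There is no need to partition $\Mon$ by the position of $x$ relative to $a_1,\dots,a_n$, nor to invoke the consecutive-pair form of Lemma~\ref{lem LB equiv}(2): full binarity does the work in one stroke. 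Your localization step is a valid alternative, just more bookkeeping than necessary.

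One small imprecision worth flagging: on a piece such as $(a_i,a_{i+1})$, the restriction of $D$ is in general \emph{not} definable from a single parameter; what the consecutive-pair reduction gives you is that it is a Boolean combination of an $a_i$-definable set and an $a_{i+1}$-definable set. That is still enough, since Proposition~\ref{prop LF one param} applies to each factor separately, but your phrasing ``genuinely one-parameter definable'' overstates what (LB) delivers there.
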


\begin{rmk} One may try to prove that (LB) implies that every formula $\phi(x_1,\dots ,x_n)$ is equivalent  modulo $T$ to a Boolean combination of formulae of the form $x_i=x_j$, $x_i<x_j$, unary formulae,  formulae defining almost convex equivalence relations and their $S_E^n$-like variants. However, that is not  possible. Take the structure $(\mathbb Q,<,E_3)$ from Example \ref{Example T_n}, fix  a cyclic permutation $p$ of $\mathbb Q/E_3$   and expand the structure  by adding a  binary relation defined by: $P(x,y)$ if and only if $p([x]_{E_3})=[y]_{E_3}$. The theory of the expansion has property (LB), but the formula $P(x,y)$ is not equivalent to a formula of the above form. 
\end{rmk}

Our general feeling is that a structure  satisfying (LF) can be produced, up to definitional equivalence, from a \ccel-order  by   refining   some of the convex equivalence relations into finitely many uniform (in some sense) pieces and then adding a structure to the finite quotient; this is  essentially what we did in Example \ref{exm Lb non binary} and in the previous remark. However, we could not find  an explicit quantifier-elimination type result  even for theories satisfying (LB).

\section{Strong linear binarity}

In this section, we prove Theorems \ref{teorema1} and \ref{teorema2}. They are rather corollaries of the previous results once we prove the following lemma.

\begin{lem}\label{Lema SLB ac}
(SLB) Every formula   defining an almost convex equivalence relation $R$ is equivalent modulo $T$ to an $L$-formula of the form \  
$\bigvee_{i\leqslant n}\, (\phi_i(x)\land E(x,y)\land\phi_i(y))$ 
\ 
where $E(x,y)$ defines the convex closure of $R$ and $\{\phi_i(\Mon)\mid  i\leqslant n\}$ is a definable partition of $\Mon$.
\end{lem}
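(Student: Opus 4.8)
The plan is to upgrade the description of almost convex equivalence relations available under (LF) to the sharper ``convex part plus unary colour'' form, using (SLB) only to control how $R$ cuts a single $E$-class and how these cuts vary from class to class. Throughout, let $E$ be the convex closure of $R$; by the remark establishing that the convex closure of a definable almost convex equivalence relation is itself definable, $E$ is definable, and by compactness and saturation there is a uniform finite bound $K$ on the number of $R$-classes inside any single $E$-class. The goal is to produce a finite partition of $\Mon$ into unary $L$-definable sets $\phi_1(\Mon),\dots,\phi_n(\Mon)$ (the ``colours'') such that, for $a,b$ in a common $E$-class, $R(a,b)$ holds if and only if $a$ and $b$ have the same colour. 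Granting this, the displayed equivalence is immediate. The whole statement thus reduces to two facts about (SLB)-theories: \textbf{(A)} inside one $E$-class, any two elements of the same $1$-type are $R$-equivalent; and \textbf{(B)} the resulting pattern of which $1$-types are ``merged'' by $R$ is the same in every $E$-class.

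For \textbf{(A)} I would argue by contradiction, reproducing the mechanism that shows that $T_2$ of Example~\ref{Example T_n} fails (SLB). Suppose $a<a'$ lie in one $E$-class $C_0$, $\tp(a)=\tp(a')$, but $\lnot R(a,a')$. Since $E$ is the \emph{finest} convex equivalence refining $R$ and $[a]_E=[a']_E$, the classes $[a]_R$ and $[a']_R$ cannot be separated by a convex set; hence some $R$-class in $C_0$ is non-convex and one finds $u<v<w$ in $C_0$ with $R(u,w)$ and $\lnot R(u,v)$. Using $\tp(a)=\tp(a')$ together with saturation and strong homogeneity, I would build a same-type configuration and an automorphism $h$ that moves one colour onto another while fixing set-wise an initial part $C$ whose boundary falls strictly between two interleaved points of a single $R$-class. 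Applying (SLB) to freeze $h$ to the identity above $C$ yields an automorphism $g$ that changes the colour of some $c\in C$ while fixing a point $d\notin C$ lying in the same $R$-class as $c$; then $R(c,d)$ holds but $\lnot R(g(c),g(d))$, contradicting $g\in\Aut(\Mon)$. Thus (A) holds, and consequently each $R$-class inside $C_0$ is a union of complete $1$-type loci.

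The main obstacle is \textbf{(B)}, the cross-class uniformity, and this is exactly the point at which (SLB) is strictly stronger than (LB) (compare the remark following Theorem~\ref{Thm_LB description of def sets}). Here I would fix $1$-types $p,q$, realisations $a\models p$, $b\models q$ in an $E$-class $C_0$ with $R(a,b)$, and realisations $a'\models p$, $b'\models q$ in another $E$-class $C_1$, and show $R(a',b')$. The plan is to transport the $C_0$-configuration across the cut separating $C_0$ from $C_1$ by a (SLB)-frozen automorphism; failing a direct transport when $C_0$ and $C_1$ have different class-types, I would derive from a persistent discrepancy an initial part over whose complement infinitely many $1$-types are realised, contradicting (LF) via Lemma~\ref{lem LF equivalents}. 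Combined with (A), this makes the relation ``$p$ and $q$ are $R$-merged'' well defined on realised $1$-types and independent of the ambient $E$-class.

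Finally I would assemble the colours. By (A) and (B) the relation ``$\tp(x)$ and $\tp(y)$ are $R$-merged'' is a well-defined equivalence relation on $S_1(T)$ refined by equality of types; its classes are unions of types, hence closed, and at most $K$ of them meet any one $E$-class. A compactness argument (again through the uniform bound on $R$-classes per $E$-class) shows there are only finitely many such classes globally, so, being a finite partition of the compact space $S_1(T)$ into closed sets, they are clopen and therefore each is defined by a unary $L$-formula $\phi_i$. The sets $\phi_i(\Mon)$ partition $\Mon$, $E$ is the convex closure of $R$, and by construction $R(x,y)\leftrightarrow\bigvee_{i\leq n}(\phi_i(x)\land E(x,y)\land\phi_i(y))$, which is the desired form.
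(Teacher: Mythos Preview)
Your (A) is exactly the paper's central Claim and is indeed the heart of the matter, but the argument you sketch has a real gap. From minimality of $E$ you cannot conclude that $[a]_R$ and $[a']_R$ are inseparable by a convex set; what minimality gives is only that $R$ is non-convex on any $E$-class carrying more than one $R$-class (else one could definably refine $E$ there). So some $R$-class in $C_0$ is indeed non-convex, but that alone does not manufacture an automorphism fixing setwise an initial part whose boundary lies inside $C_0$, and your sketch does not explain how to obtain one. The paper's mechanism is specific and does not rely on an interleaving picture: iterate an automorphism $f$ with $f(a)=a'$ to obtain an increasing sequence $a_n=f^n(a)$ in $C_0$ with $\lnot R(a_n,a_{n+1})$; the initial part $I=\bigcup_n(-\infty,a_n]$ is $f$-invariant, and (SLB) lets one take $f$ to be the identity on $\Mon\setminus I$. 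By pigeonhole some single $R$-class $C_1\subseteq C_0$ contains infinitely many $a_n$, and by saturation there is $c\in C_1$ with $I<c$, so $f(c)=c$; picking $a_m\in C_1$ gives $R(a_m,c)\Leftrightarrow R(a_{m+1},c)$, hence $R(a_m,a_{m+1})$, a contradiction.

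Your step (B) and the type-level colouring are both unnecessary and problematic. First, (B) is a one-line corollary of (A): move $a$ to $a'$ by an automorphism $g$; then $g(b)$ and $b'$ are same-type elements of $[a']_E$, so $R(g(b),b')$ by (A), whence $R(a',b')$. Second, your relation ``$p$ and $q$ are $R$-merged'' on $S_1(T)$ need not be transitive (the witnessing $E$-classes for $p\sim q$ and $q\sim r$ need not help with $p\sim r$ if no $E$-class realises all three), and the promised ``compactness argument'' for global finiteness of its classes is not supplied---the bound $K$ is per $E$-class, not global. The paper sidesteps this entirely by building the colours from $R$ rather than from types: having extracted $\theta_p\in p$ from (A) by compactness, it sets $\phi_p(y):=\exists z\,(R(z,y)\wedge\theta_p(z))$, so that $p(x)\vdash R(x,y)\leftrightarrow(E(x,y)\wedge\phi_p(y))$, and a finite subcover of $S_1(T)$ by the $[\phi_p]$'s yields the required disjunction with definable colours automatically.
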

\begin{proof}
Suppose that $R$ is a definable  almost convex equivalence relation on $\Mon$ and let $E$ be its convex closure. Let $a\in \Mon$ and  $\tp(a)=p$. 

\smallskip

\noindent{\bf Claim.} \ $\models R(a,y)\leftrightarrow (E(a,y)\land (\exists z)( R(z,y)\land\bigwedge_{\psi\in p}\,\psi(z)))$.

\smallskip
\noindent{\em Proof of the claim.} The left-to-right direction is clear. We prove the other one in the contrapositive. So 
suppose that for some $a'$ realizing $p$ and $b\in\Mon$ we have $\models E(a,b)\land R(a',b)\land \lnot R(a,b)$. Clearly, $a,a'$ and $b$ belong to the same $E$-class which we will  
denote   by $C$. Also $a$ and $a'$ are from distinct $R$-classes and, without loss of generality, we will assume that $a<a'$ holds. Let $f$ be an automorphism of $\Mon$ mapping $a$ to $a'$. Choose an increasing  sequence  
  $A=(a_n\mid n\in\omega)$ satisfying $a_0=a$ and $f(a_n)=a_{n+1}$ ($n\in\omega$). Then  
$\tp(a_n,a_{n+1})=\tp(a,a')$ holds for all $n\in\omega$, so all the elements of the sequence belong to $C$, with $a_n$ and $a_{n+1}$ being in distinct $R$-classes. Note also that the set $I=\bigcup_{n\in\omega}\,(-\infty, a_n]$   is an initial part of $\Mon$ and that it is fixed by $f$ (setwise). By applying condition (SLB) we may assume that $f$ is the identity on $\Mon\smallsetminus I$.  
Since the class $C$ consists of finitely many $R$-classes, at least one of them (say $C_0$) contains infinitely many members of $A$. Consider the following set of formulae \   $\Sigma(x)=\{x\in C_0\}\cup\{a_n<x\mid n\in\omega\}$. \ Every finite subset of $\Sigma(x)$  is satisfied by 
all large enough elements of $A\cap C_0$  so, by saturation, there exists an element $c\in C_0$ realizing $\Sigma(x)$. In particular, we have  $I<c$ so, by our assumption on $f$,   $f(c)=c$. Choose $a_m\in C_0$. Then  $f(a_m,c)=(a_{m+1},c)$, so $\models R(a_m,c)\leftrightarrow R(a_{m+1},c)$ and $a_m,c\in C_0$ imply $\models R(a_m,a_{m+1})$. This is impossible because  $\tp(a_m,a_{m+1})=\tp(a,a')$ implies that $a_m$ and $a_{m+1}$ are in distinct $R$-classes. \hfill$\qed_{Claim}$

\smallskip
The rest of the proof is routine. First  note that the right hand side of the equivalence in the claim is an infinite conjunction, while the left one is a single formula. By compactness, finitely many conjuncts are needed for the equivalence to hold. In fact, since $p$ is a complete type  one of them would suffice, so we can choose  $\theta_p(z)\in p(z)$ satisfying: 
\begin{center}
$\models R(a,y)\leftrightarrow (E(a,y)\land (\exists z)( R(z,y)\land\ \theta_p(z)))$.
\end{center} Hence \  $p(x)\vdash R(x,y)\leftrightarrow (E(x,y)\land \phi_p(y))$, \ where \ $\phi_p(y):= (\exists z)( R(z,y)\land\ \theta_p(z))$. By compactness again, there exists $\psi_p(x)\in p(x)$ such that 
$\psi_p(x)\vdash R(x,y)\leftrightarrow (E(x,y)\land \phi_p(y))$. Without loss of generality  assume $\models\phi_p(x)\rightarrow \exists z (E(z,x)\land \psi_p(z))$. Then   $\phi_p(x)\vdash R(x,y)\leftrightarrow (E(x,y)\land \phi_p(y))$ and,
 by extracting a finite subcover of $S_1(T)$  from   $\{[\phi_p]\mid p\in S_1(T)\}$,  we get 
$$\models R(x,y)\leftrightarrow\bigvee_{p\in S}\, (\phi_{p}(x)\land E(x,y)\land\phi_{p}(y)).$$
It remains to note  that we may modify the $\phi_{p}$'s  so that they are  pairwise contradictory.
\end{proof}

\begin{cor}\label{Cor slb2} 
(SLB) Every parametrically definable   set $D\subseteq \Mon$ is  a Boolean combination of unary definable sets, intervals and classes of  definable convex equivalence relations. In particular, $D$ has finitely many convex components on the locus of a fixed type $q\in S_1(T)$.  
\end{cor}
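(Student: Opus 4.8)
The plan is to assemble the statement from two facts already in hand: the (LB)-description of definable sets of singletons (Theorem~\ref{Thm_LB description of def sets}) and the normal form that (SLB) forces on almost convex equivalence relations (Lemma~\ref{Lema SLB ac}). Since (SLB) implies (LB) by Proposition~\ref{prop RB SLB LB LF}, Theorem~\ref{Thm_LB description of def sets} already tells us that every parametrically definable $D\subseteq\Mon$ is a Boolean combination of intervals and classes of definable almost convex equivalence relations. Hence the whole first assertion reduces to rewriting each class of a definable almost convex equivalence relation as a Boolean combination of the permitted pieces (unary definable sets, intervals, classes of definable convex equivalence relations).

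First I would isolate a single class $[a]_R$ of a definable almost convex equivalence relation $R$. By Lemma~\ref{Lema SLB ac}, $R(x,y)$ is $T$-equivalent to $\bigvee_{i\leqslant n}(\phi_i(x)\land E(x,y)\land\phi_i(y))$, where $E$ is the convex closure of $R$ and $\{\phi_i(\Mon)\mid i\leqslant n\}$ partitions $\Mon$. Because the $\phi_i$ are mutually exclusive, letting $i_0$ be the unique index with $a\in\phi_{i_0}(\Mon)$, the formula $R(a,y)$ collapses to $E(a,y)\land\phi_{i_0}(y)$, so $[a]_R=[a]_E\cap\phi_{i_0}(\Mon)$. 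Thus every class of a definable almost convex equivalence relation is the intersection of a class of a definable convex equivalence relation with a unary definable set, in particular a Boolean combination of the allowed pieces. Substituting this into the description from the first paragraph yields the main assertion: $D$ is a Boolean combination of unary definable sets, intervals, and classes of definable convex equivalence relations.

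For the final claim, I would fix $q\in S_1(T)$ and restrict to its locus. Since $q$ is complete, every $\emptyset$-definable unary set either contains $q(\Mon)$ or is disjoint from it, so on $q(\Mon)$ the Boolean combination above simplifies to $D\cap q(\Mon)=D'\cap q(\Mon)$, where $D'$ is a Boolean combination of finitely many intervals and classes of definable convex equivalence relations. Each constituent of $D'$ is a convex subset of $\Mon$, and then I would finish with the routine order-theoretic count: the suprema and infima of the finitely many convex sets entering $D'$ furnish finitely many cuts partitioning $\Mon$ into convex bands on each of which membership in $D'$ is constant, so $D'$, and hence $D\cap q(\Mon)$, has finitely many convex components. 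The argument is a bookkeeping assembly of earlier results rather than anything deep; the only steps needing care are exactly those of this last paragraph, namely the triviality of $\emptyset$-definable unary sets on a complete type's locus and the finiteness of the convex-component count for a Boolean combination of finitely many convex sets.
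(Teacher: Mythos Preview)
Your proposal is correct and follows essentially the same approach as the paper: apply Theorem~\ref{Thm_LB description of def sets} via (SLB)$\Rightarrow$(LB), then use Lemma~\ref{Lema SLB ac} to unpack each almost convex class as an intersection $[a]_E\cap\phi_{i_0}(\Mon)$, and for the second part observe that each building block is convex on $q(\Mon)$. The paper's version of the last step is marginally slicker---it simply notes that each $D_i\cap q(\Mon)$ is convex in $(q(\Mon),<)$ (unary sets included, since they are all-or-nothing on a complete type) and concludes directly---but your cut-counting argument is equivalent.
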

\begin{proof}
Suppose that $D\subseteq\Mon$ is parametrically definable.  By Theorem \ref{Thm_LB description of def sets}
$D$ is a Boolean combination of intervals and classes of definable almost convex equivalence relations;  by Lemma \ref{Lema SLB ac}  each of these classes is  a Boolean combination of unary definable sets and classes of definable convex equivalence relations. This proves the first part. To prove the second, assume that  $D$ is a Boolean combination of sets $D_1,\dots , D_n$ such that each $D_i$ is either  a unary $L$-definable set,  an interval, or a class of a convex $L$-definable equivalence. Note that in each case $D_i\cap q(\Mon)$ is a convex subset of $(q(\Mon),<)$, so that the Boolean combination has finitely many convex components.  
\end{proof} 
 
\begin{rmk}
Recall that a complete theory of linearly ordered structures  $T$ is  called  weakly quasi-o-minimal if every parametrically definable subset of a model of $T$ is a Boolean combination of unary  $L$-definable sets and convex sets. A consequence of the previous corollary
is that (SLB) implies weak quasi-o-minimality of the theory. 
\end{rmk}

 We say that a definable set $D\subseteq \Mon$ is {\it definably convex} if there is an $L$-formula $\phi(x)$ such that $D$ is a convex subset of $(\phi(\Mon),<)$. In other words, $D$ is the intersection of a unary definable set and a convex set.

\begin{prop}\label{prop slb3} (SLB)  Every parametrically definable subset of $\Mon$ can be partitioned into finitely many  definably convex  pieces (definable over the same parameter set).
\end{prop}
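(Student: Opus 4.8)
The plan is to read this off Corollary \ref{Cor slb2}, which shows in particular that (SLB) entails weak quasi-o-minimality. Fix a finite parameter set $A$ over which $D$ and all the generators below are defined. By Corollary \ref{Cor slb2} I can write
\[
D=B(U_1,\dots,U_k,V_1,\dots,V_m),
\]
where $B$ is a fixed Boolean combination, each $U_i=\psi_i(\Mon)$ is a unary $L$-definable set, and each $V_j$ is an $A$-definable convex subset of $\Mon$ (intervals and classes of convex equivalence relations are convex, so they may all be grouped as the $V_j$). The goal is to produce an $A$-definable convex partition of $\Mon$ so fine that on each of its pieces the convex generators $V_j$ become trivial, leaving only a unary definable set.

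First I would attach to each nonempty convex generator its two bounding cuts: put
\[
I_j^{-}=\{x\mid \forall y\,(V_j(y)\Rightarrow x<y)\}\quad\text{and}\quad I_j^{+}=\{x\mid \exists y\,(V_j(y)\land x\leq y)\},
\]
both $A$-definable initial parts with $I_j^{-}\subseteq I_j^{+}$ and $V_j=I_j^{+}\smallsetminus I_j^{-}$. The key structural fact is that any two initial parts of a linear order are comparable by inclusion, so the finite family $\{I_j^{\pm}\mid j\leq m\}$ is linearly ordered; after discarding repetitions it lists as $\emptyset=J_0\subsetneq J_1\subsetneq\dots\subsetneq J_s\subsetneq J_{s+1}=\Mon$. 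The differences $W_p=J_p\smallsetminus J_{p-1}$ ($1\leq p\leq s+1$) then form an $A$-definable convex partition of $\Mon$, and since each $I_j^{\pm}$ occurs among the $J_q$, every $V_j$ is a union of $W_p$'s; equivalently $V_j\cap W_p\in\{\emptyset,W_p\}$ for all $j,p$.

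It remains to restrict $D$ to a single piece. Intersecting with $W_p$ is a Boolean algebra homomorphism onto the subsets of $W_p$, and there each generator $V_j$ contributes either the bottom $\emptyset$ or the top $W_p$; substituting these constants collapses $B$ to a Boolean combination of the $U_i\cap W_p$ alone. Hence $D\cap W_p=U'_p\cap W_p$ for a unary $L$-definable set $U'_p$ (a Boolean combination of the $\psi_i(\Mon)$), which is definably convex, being the intersection of a unary definable set with the convex set $W_p$. As $\{W_p\}$ partitions $\Mon$ over $A$, the nonempty sets $D\cap W_p$ partition $D$ into finitely many definably convex, $A$-definable pieces. I expect the only genuinely delicate point to be parameter-uniformity: one must ensure the cuts $I_j^{\pm}$, hence the pieces $W_p$, are definable over the single set $A$, which is exactly what the explicit definitions above secure, the comparability of initial parts doing the rest.
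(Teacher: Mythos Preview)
Your argument is correct and follows a genuinely different path from the paper's. The paper works type by type: for each $q\in S_1(T)$ it invokes the second clause of Corollary~\ref{Cor slb2} (finitely many convex components of $D$ on $q(\Mon)$), uses compactness to pass to a formula $\psi_q(x)\in q$ on whose locus $D$ still has boundedly many convex components, and then extracts a finite subcover of $S_1(T)$. You instead take the first clause of Corollary~\ref{Cor slb2} at face value and exploit the purely order-theoretic fact that initial parts of a linear order are totally ordered by inclusion; this lets you refine all the convex generators $V_j$ simultaneously by a single finite convex partition $\{W_p\}$, on each piece of which every $V_j$ degenerates to $\emptyset$ or $W_p$ and $D$ becomes the trace of a unary $L$-definable set. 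Your route is more elementary (no compactness over the type space) and gives an explicit description of the pieces; the paper's route has the advantage of not needing to know that the generators furnished by Corollary~\ref{Cor slb2} live over the original parameter tuple.

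That last point is the one place your write-up is slightly loose. The proposition, as the paper reads and proves it, asks for pieces definable over the \emph{same} tuple $\bar a$ that defines $D$, whereas you allow $A$ to absorb whatever extra parameters the $V_j$ may carry. To close this gap you should observe---by unwinding Proposition~\ref{prop LF one param}, Theorem~\ref{Thm_LB description of def sets}, and Lemma~\ref{Lema SLB ac}---that the intervals and equivalence-class generators appearing in Corollary~\ref{Cor slb2} can always be chosen with parameters from $\bar a$ itself; granting that, you may take $A=\bar a$ from the outset and your argument yields the full statement.
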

\begin{proof} We will prove that any $\bar a$-definable set can be represented as a union of finitely many $\bar a$-definable, definably convex sets. Having such a representation, it is not hard to produce one in which the definably convex sets are pairwise disjoint.  
Let $D=\phi(\Mon,\bar a)$  and let $q\in S_1(T)$. By Corollary  \ref{Cor slb2} the set $D\cap q(\Mon)$ has finitely many, say $<n_q$, convex components. Hence, the following set of formulae is inconsistent:
$$\bigcup_{i\leqslant n_q} q(x_i)\cup \left\{\bigwedge_{i<n_q} x_i<x_{i+1},\bigwedge_{i< n_q}\lnot(\phi(x_i,\bar a)\leftrightarrow\phi(x_{i+1},\bar a))\right\}.$$
By compactness,  the formula \ $\bigwedge_{i\leqslant n_q} \psi_q(x_i)\land\bigwedge_{i< n_q} (x_i<x_{i+1}\land \lnot(\phi(x_i,\bar a)\leftrightarrow\phi(x_{i+1},\bar a)))$ is inconsistent  for some $\psi_q(x)\in q(x)$. This means that the set $D\cap \psi_q(\Mon)$ has $<n_q$ convex components on $(\psi_q(\Mon),<)$. Clearly, each  of these components is     $\bar a$-definable and definably convex, so  $D\cap \psi_q(\Mon)$ is the union of finitely many definably convex, $\bar a$-definable sets;  note that this still holds after replacing $\psi_q(y)$ by a formula implying it.
The rest of the proof is a routine application of compactness: the union  $\bigcup_{q\in S_1(T)}[\psi_q]$  covers the space $S_1(T)$, so we can choose a finite subcover  $[\psi_{q_i}]$.  Then $D=\bigcup_{i\leqslant N}(D\cap \psi_{q_i}(\Mon))$ is a representation of $D$ as the finite union of  definably  convex, $\bar a$-definable sets. 
\end{proof}

Now we are ready to prove Theorem \ref{teorema2}. Recall that a {\it $u$-convex} formula is either a unary $L$-formula or the formula $\theta(x,y)$ which is a conjunction   of a unary $L$-formula $\psi(x)$ and one of 
\begin{center}
$S^{-m}_{E_1}(y)\leqslant x<S^{-n}_{E_2}(y)$, \ $S^{-m}_{E_1}(y)\leqslant x\leqslant S^{n}_{E_2}(y)$,  \ and \  $S^{m}_{E_1}(y)< x\leqslant S^{n}_{E_2}(y)$.
\end{center}
 By Corollary \ref{Cor LFconvex a definable sets} every $a$-definable, definably convex subset is defined by  $\theta(x,a)$ for some $u$-convex formula $\theta(x,y)$.

\begin{proof}[{\it Proof of Theorem \ref{teorema2}}] Assuming that $T$ satisfies (SLB), we will prove that every $L$-formula is  equivalent modulo $T$ to a Boolean combination of $u$-convex formulae.  Since $T$ is binary, it suffices to prove it for formulae in two free variables. Fix $\phi(x,y)$ and  $a\in \Mon$. Let   $p=\tp(a)$. By Proposition \ref{prop slb3} the set $\phi(\Mon,a)$ can be partitioned into  $a$-definable, definably convex pieces. Each of them is defined by a $u$-convex formula with parameter $a$, so  
$\models \phi(x,a)\leftrightarrow\theta(x,a)$ holds for some formula $\theta(x,y)$ which is a disjunction of $u$-convex formulae. By compactness, there exists $\psi_p(x)\in p$ such that
\begin{center}
$\models (\psi_p(y)\land\phi(x,y))\leftrightarrow(\psi_p(y)\land\theta(x,y))$.
 \end{center} 
Denote the formula on the right hand side of the equivalence by $\theta_p(x,y)$; it is the conjunction of
two $u$-convex formulae.
 Since  $\{[\psi_p]\mid p\in S_1(T)\}$  is a cover of $S_1(T)$, by compactness we can extract a finite subcover. Then the disjunction of the formulae $\theta_p(x,y)$ corresponding to the elements of the subcover is equivalent to $\phi(x,y)$. Clearly, this disjunction is a Boolean combination of $u$-convex formulae. 
\end{proof}

  Lemma \ref{Lem ccel orders have SLB} implies that the complete theory of any definitional expansion of a \ccel-order satisfies (SLB). By Theorem \ref{teorema2} adding names for all $u$-convex formulae to the language guarantees elimination of quantifiers. 
 
\begin{cor}\label{cor SLB elim quantifiers}
If $(M,<,P_i,E_j,R_{n,j})_{i\in I,j\in J,n\in\mathbb Z}$ is a   linearly ordered structure in which all definable unary  sets $P_i$ and convex equivalence relations $E_j$ are named, while each $R_{n,j}$ is a binary relation defined by  $x\leqslant S^{n}_{E_i}(y)$ for $n\geqslant 0$ and  $x<S^{n}_{E_i}(y)$ for $n<0$, then its complete theory eliminates quantifiers. 
\end{cor}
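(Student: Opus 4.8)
The plan is to feed Theorem~\ref{teorema2} into the specific language at hand, so that the whole argument reduces to checking that every $u$-convex formula is quantifier-free. First I would observe that $T$ satisfies (SLB): the reduct of $(M,<,P_i,E_j,R_{n,j})$ to $(M,<,P_i,E_j)$ is exactly a \ccel-order, and each $R_{n,j}$ is definable from $<$ and $E_j$, so the given structure is a definitional expansion of a \ccel-order. Since (SLB) is an automorphism-theoretic condition it is invariant under definitional equivalence, so Lemma~\ref{Lem ccel orders have SLB} yields (SLB) for $T$. Theorem~\ref{teorema2} then expresses every formula as a Boolean combination of $u$-convex formulae; as Boolean combinations of quantifier-free formulae are quantifier-free, it suffices to prove that each $u$-convex formula is equivalent modulo $T$ to a quantifier-free formula in $\{<,P_i,E_j,R_{n,j}\}$.

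The easy ingredients come for free from the hypotheses. A unary $L$-formula $\psi(x)$ defines a unary definable set, which is named by some $P_i$, so $\psi(x)$ is equivalent to the atom $P_i(x)$; and the convex equivalence relations $E_1,E_2$ occurring in a $u$-convex formula are among the named $E_j$. It remains to treat the two endpoint inequalities bounding the convex part. The ``$x$-below'' endpoints that actually occur, namely $x\leqslant S^{n}_{E}(y)$ with $n\geqslant 0$ and $x<S^{n}_{E}(y)$ with $n\leqslant 0$, are by design precisely the defining conditions of the relations $R_{n,j}$; the only mismatch is the strict inequality against the class $S^0_E(y)$ itself, which is handled by $x<S^0_E(y)\equiv R_{0,j}(x,y)\wedge\lnot E_j(x,y)$. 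The ``$S^N_E(y)$-below'' (left) endpoints $S^{-m}_{E}(y)\leqslant x$ and $S^{m}_{E}(y)< x$ are then obtained by negating the corresponding $x$-below relation.

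The point requiring care, and the main obstacle, is that such a negation is correct only modulo the existence of the relevant successor or predecessor class: if $S^N_E(y)=\emptyset$ then the endpoint inequality is inconsistent, whereas the negated $R$-relation becomes vacuously true. I would therefore conjoin each left endpoint with a quantifier-free predicate asserting that the class is nonempty. For a successor class this is immediate, since $S^n_E(y)\neq\emptyset$ is equivalent to $R_{n,j}(y,y)$ (one has $y\leqslant S^n_E(y)$ exactly when the $n$-th successor class of $[y]_E$ exists). For a predecessor class the relation $R_{-m,j}(y,y)$ is uninformative, but here I would invoke the standing hypothesis that all unary definable sets are named: $\{y:S^{-m}_E(y)\neq\emptyset\}$ is the set of $y$ with at least $m$ $E$-classes below $[y]_E$, a unary definable set, hence equal to some $P_i(\Mon)$ and thus quantifier-free. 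With existence so expressed, every endpoint inequality, and therefore every $u$-convex formula, is quantifier-free; by Theorem~\ref{teorema2} so is every formula, which is exactly the claim that $T$ eliminates quantifiers.
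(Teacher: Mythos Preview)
Your argument is correct and follows the same route as the paper: apply Lemma~\ref{Lem ccel orders have SLB} to the \ccel-reduct to obtain (SLB), invoke Theorem~\ref{teorema2} to reduce to $u$-convex formulae, and check that these are quantifier-free in the expanded language. The paper leaves this last verification implicit, whereas you carry it out carefully, including the emptiness caveat for the left endpoints; this extra care is entirely appropriate and the handling via $R_{m,j}(y,y)$ and the named unary predicate for $\{y:S^{-m}_E(y)\neq\emptyset\}$ is exactly right.
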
 

\begin{proof}[{\it Proof of Theorem \ref{teorema1}}]
It suffices to show that a saturated linearly ordered structure is definitionally equivalent with its \ccel-reduct if and only if it satisfies (SLB). The right-to left direction  is a consequence of Theorem \ref{teorema2}: (SLB) implies that  every formula is equivalent with a Boolean combination of $u$-convex  formulae which are expressible in the language of the \ccel-reduct, too.  The other direction is an immediate consequence of Lemma \ref{Lem ccel orders have SLB}.
\end{proof}

\bibliographystyle{abbrv}
\bibliography{rubin_arxiv.bib}

\begin{thebibliography}{1}

\bibitem{MTso}
S.~Moconja and P.~Tanovi{\'c}.
\newblock Stationarily ordered types and the number of countable models.
\newblock {\em Annals of Pure and Applied Logic}, 171(3):102765, 2020.

\bibitem{Poizat}
B.~Poizat.
\newblock {\em A course in model theory: an introduction to contemporary
  mathematical logic}.
\newblock Springer Science \& Business Media, 2012.

\bibitem{Rast}
R.~Rast.
\newblock The complexity of isomorphism for complete theories of linear orders
  with unary predicates.
\newblock {\em Archive for Mathematical Logic}, 56(3-4):289--307, 2017.

\bibitem{Rosenstein}
J.~G. Rosenstein.
\newblock {\em Linear orderings}.
\newblock Academic press, 1982.

\bibitem{Rubin}
M.~Rubin.
\newblock Theories of linear order.
\newblock {\em Israel Journal of Mathematics}, 17(4):392--443, 1974.

\bibitem{Simon}
P.~Simon.
\newblock On dp-minimal ordered structures.
\newblock {\em The Journal of Symbolic Logic}, 76(2):448--460, 2011.

\bibitem{Simon2}
P.~Simon.
\newblock {\em A guide to NIP theories}.
\newblock Cambridge University Press, 2015.

\end{thebibliography}
\end{document}